\newtheorem{thm}{Theorem}%[section]
\newtheorem{cor}[thm]{Corollary}
\newtheorem{lem}[thm]{Lemma}
\newtheorem*{thm*}{Theorem}
\newtheorem*{cor*}{Corollary}
\newtheorem*{lem*}{Lemma}
\newtheorem*{prop*}{Proposition}
\newtheorem*{ex*}{Exercise} 
\theoremstyle{definition}
\newtheorem{defn}[thm]{Definition}
\newtheorem*{defn*}{Definition}
\newtheorem*{prob*}{Problem}
\theoremstyle{definition}
\theoremstyle{definition}
\newtheorem{examp}[thm]{Example}
\newtheorem*{example*}{Example}
\theoremstyle{remark}
\newtheorem*{conj*}{Conjecture}
\newcommand{\mc}[1]{\mathcal{#1}}
\newcommand{\mbf}[1]{\mathbf{#1}}
\newcommand{\R}{{\mathbb R}}
\newcommand{\inv}{\ensuremath{^{-1}}}
\newcommand{\empset}{\{\}}
\renewcommand{\today}{\number \day \space \ifcase \month \or January\or%
  February\or March\or April\or May\or June\or July\or August\or%
  September\or October\or November\or December\fi \space \number \year}
\DeclareMathOperator{\height}{height}
\newcommand{\ileft}[2][{}]{\ensuremath{\ell_{#1}(#2)}}
\newcommand{\iright}[2][{}]{\ensuremath{\mathit{r}_{#1}(#2)}}
\newcommand{\up}[1]{\ensuremath{U(#1)}}
\newcommand{\down}[1]{\ensuremath{D(#1)}}
\newcommand{\set}[1]{\ensuremath{\left\{ #1 \right\}}}
\newcommand{\twoplustwo}{\ensuremath{\mbf{2}+\mbf{2}}}
\newcommand{\oneplusthree}{\ensuremath{\mbf{1}+\mbf{3}}}
\numberwithin{thm}{section}
\tikzset{vtx/.style={circle, inner sep=0pt, minimum size=0.22cm,fill=black}}
\DeclareMathAlphabet{\mathcal}{OMS}{cmsy}{m}{n}
\DeclareMathOperator{\asc}{asc}
\newcommand{\lQ}[2][]{\ensuremath{\ell^{#1}\!\paren{#2}}}
\newcommand{\paren}[1]{\ensuremath{\left( #1 \right)}}
\renewcommand{\set}[1]{\ensuremath{\left\{ #1 \right\}}}
\newcommand{\moveone}{\textup{\texttt{Move 1}}}
\newcommand{\movetwo}{\textup{\texttt{Move 2}}}
\newcommand{\movethree}{\textup{\texttt{Move 3}}}
\newcommand{\nonempty}[1]{\left\langle #1 \right\rangle}
\newcommand{\Wb}{\ensuremath{\mathcal{W}}}
\newcommand{\Ub}{\ensuremath{\mathcal{U}}}
\newcommand{\Cb}{\ensuremath{\mathcal{C}}}
\newcommand{\Bb}{\ensuremath{\mathcal{B}}}
\newcommand{\Ab}{\ensuremath{\mathcal{A}}}
\newcommand{\Xoo}{\ensuremath{\mathcal{X}_{oo}}}
\newcommand{\Xos}{\ensuremath{\mathcal{X}_{os}}}
\newcommand{\Xso}{\ensuremath{\mathcal{X}_{so}}}
\newcommand{\Xss}{\ensuremath{\mathcal{X}_{ss}}}
\newcommand{\sbd}{\ensuremath{\:\!|\:\!}}
\newcommand{\wbd}{\ensuremath{\:{\text{\raisebox{1.5pt}{\rotatebox[origin=c]{90}{\Large\Kutline}}}}\:\!}}
\newcommand{\wbdo}{\ensuremath{\:{\text{\raisebox{1.5pt}{\rotatebox[origin=c]{90}{\Large\Kutline}}}}^{\!\!\mathrm{o}}\:\!}}
\renewcommand{\ileft}[2][{}]{\ensuremath{\mathbbm{l}_{#1}(#2)}}
\renewcommand{\iright}[2][{}]{\ensuremath{\mathbbm{r}_{#1}(#2)}}
\begin{document}
\title{Hereditary Semiorders and Enumeration of Semiorders by Dimension}
\author{Mitchel T.\ Keller}
\address{Department of Natural and Mathematical Sciences\\Morningside
  College\\1501 Morningside Avenue\\Sioux City, IA 51106}
\email{kellerm@morningside.edu}
\author{Stephen J.\ Young}
\address{Pacific Northwest National Laboratory\\Richland, WA 99352}
\email{stephen.young@pnnl.gov}
\date{27 February 2020}
\thanks{\textit{PNNL Information Release:} PNNL-SA-130793}
\keywords{semiorder, ascent sequence, poset, dimension, enumeration,
  interval representation, (\twoplustwo)-free poset, generating
  functions, (\oneplusthree)-free poset}
\subjclass[2010]{06A07, 05A15}
\maketitle
\begin{abstract}
  In 2010, Bousquet-M\'elou et al. defined sequences of
  nonnegative integers called ascent sequences and showed that
  the ascent sequences of length $n$ are in one-to-one correspondence
  with the interval orders, i.e., the posets not containing the poset
  $\twoplustwo$. Through the use of generating
  functions, this provided an answer to the longstanding open question
  of enumerating the (unlabeled) interval orders. A semiorder is an
  interval order having a representation in which all intervals have
  the same length. In terms of forbidden subposets, the semiorders
  exclude \twoplustwo{} and \oneplusthree. The number of unlabeled
  semiorders on $n$ points has long been known to be the
  $n$\textsuperscript{th} Catalan number. However, describing the
  ascent sequences that correspond to the semiorders under the
  bijection of Bousquet-M\'elou et al.\ has proved difficult. In this
  paper, we discuss a major part of the difficulty in this area: the
  ascent sequence corresponding to a semiorder may have an initial
  subsequence that corresponds to an interval order that is not a
  semiorder.

  We define the hereditary semiorders to be those corresponding to an
  ascent sequence for which every initial subsequence also corresponds
  to a semiorder. We provide a structural result that characterizes
  the hereditary semiorders and use this characterization to determine
  the ordinary generating function for hereditary semiorders. We also
  use our characterization of hereditary semiorders and the
  characterization of semiorders of dimension $3$ given by Rabinovitch
  to provide a structural description of the semiorders of dimension
  at most $2$. From this description, we are able to determine the
  ordinary generating function for the semiorders of dimension at most $2$.
\end{abstract}
\section{Background and Motivation}

In this article, we investigate the bijective relationship between
interval orders and ascent sequences introduced by Bousquet-M\'elou et
al.\ in \cite{bousquet:intord-enum}. In that paper, the authors
answered a classic open question by providing an enumeration of
interval orders through a bijection with nonnegative sequences of
integers known as ascent sequences. The number of semiorders has long
been known to be given by the Catalan numbers, but no one has yet
given a description of the subclass of ascent sequences associated to
the semiorders by the bijection of Bousquet-M\'elou et al.\ in terms
of ascents. Most problematic is the fact that it is possible for an
ascent sequence to correspond to a semiorder while some initial
subsequence of that ascent sequence corresponds to an interval order
that is \emph{not} a semiorder. To address this, we define the class
of hereditary semiorders as those for which every initial subsequence
of the corresponding ascent sequence corresponds to a semiorder. The
hereditary semiorders can also be nicely described in terms of their
interval representation, and this structure further allows us to give
a characterization of the semiorders of dimension $2$ in terms of this
structure. Rabinovitch proved in \cite{rabinovitch:semiorder-dim}
that all semiorders have dimension at most $3$. Combined with the work
of Kelly in \cite{kelly:3-dim-irr} and Trotter and Moore in
\cite{trotter:3-dim-irr}, this led to a characterization of those of dimension $3$. Our analysis uses that characterization. With these structural results in hand, we are able to enumerate both the hereditary semiorders and the semiorders of dimension $2$.

\subsection*{Interval orders, semiorders, and dimension}

Before proceeding to our discussion of interval orders and semiorders,
we require a couple of definitions that apply to all posets. For a
poset $P = (X,\leq_P)$ and $x\in X$, the (open) \emph{down set} of
$x$, denoted by $\down{x}$, is $\set{y\in X\colon y <_P x}$. Dually,
the \emph{up set} of $x$, denoted by $\up{x}$ is $\set{y\in X\colon y
  >_P x}$. For a positive integer $n$, $\mbf{n}$ denotes the totally ordered poset with $n$ elements. If $n$ and $m$ are positive integers, then $\mbf{n}+\mbf{m}$ denotes the disjoint union of the posets $\mbf{n}$ and $\mbf{m}$. The posets $\twoplustwo$ and $\oneplusthree$ are depicted in Figure~\ref{fig:semi-forbidden}.

\begin{figure}[h]
  \centering
  \raisebox{-0.5\height}{\begin{tikzpicture}
    \draw [fill] (0,0) circle [radius=0.1];
    \draw [fill] (0,1) circle [radius=0.1];
    \draw [fill] (1,0) circle [radius=0.1];
    \draw [fill] (1,1) circle [radius=0.1];
    \draw (0,0) -- (0,1);
    \draw (1,0) -- (1,1);
    \node [below] at (0.5,-0.5) {\twoplustwo};
  \end{tikzpicture}}
  \hspace*{0.3\linewidth}
  \raisebox{-0.5\height}{\begin{tikzpicture}
    \draw [fill] (1,0) circle [radius=0.1];
    \draw [fill] (1,1) circle [radius=0.1];
    \draw [fill] (1,2) circle [radius=0.1];
    \draw [fill] (0,1) circle [radius=0.1];
    \draw (1,0) -- (1,1) -- (1,2);
    \node [below] at (0.5,-0.5) {\oneplusthree};
  \end{tikzpicture}}
  \caption{The posets \twoplustwo{} and \oneplusthree.}
  \label{fig:semi-forbidden}
\end{figure}

We call a poset $P = (X,\leq_P)$ an \emph{interval order} provided
that for each $x\in X$ there exists a closed, bounded interval $I(x) =
[\ileft{x},\iright{x}]$ of $\R$ such that $x <_P y$ if and only if $\iright{x} < \ileft{y}$, i.e., the interval of $x$ lies completely to the left of the interval of $y$. The collection of intervals associated to $P$ is called an \emph{interval representation} of $P$ (or just a \emph{representation}). An interval order $P$ is called a \emph{semiorder} provided that $P$ has an interval representation in which all intervals have the same (typically unit) length. The first appearance of what we today recognize as an interval order is in a paper by Wiener \cite{wiener:intords}. It wasn't until 1970, however, that the following theorem was established by Fishburn.
\begin{thm}[Fishburn \cite{fishburn:intords-char}]\label{thm:fishburn}
  Let $P=(X,\leq_P)$ be a poset. The following are equivalent:
  \begin{enumerate}
  \item $P$ is an interval order.
  \item $P$ does not contain $\twoplustwo$ as a subposet.
  \item If $x<_Py$ and $z<_Pw$, then $x<_P w$ or $z<_P y$.
  \item The collection of down sets of elements of $X$ is totally ordered by inclusion.
  \item The collection of up sets of elements of $X$ is totally ordered by inclusion.
  \end{enumerate}
\end{thm}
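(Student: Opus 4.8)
The plan is to prove the cycle $(1)\Rightarrow(3)\Rightarrow(2)\Rightarrow(4)\Rightarrow(1)$, establishing the equivalence of the first four conditions, and then to deduce $(5)$ by a duality argument. Since the applications below concern finite posets, I will assume $X$ is finite.

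The first three implications are short forcing arguments. For $(1)\Rightarrow(3)$, fix an interval representation; if $x<_P y$ and $z<_P w$ but $x\not<_P w$ and $z\not<_P y$, then $\iright{x}<\ileft{y}$, $\iright{z}<\ileft{w}$, $\iright{x}\geq\ileft{w}$ and $\iright{z}\geq\ileft{y}$, and chaining these gives $\ileft{w}\leq\iright{x}<\ileft{y}\leq\iright{z}<\ileft{w}$, a contradiction. For $(3)\Rightarrow(2)$, a copy of $\twoplustwo$ on $a<_P b$ and $c<_P d$ immediately violates $(3)$ applied to those two relations. For $(2)\Rightarrow(4)$, if the down sets are not totally ordered there are $y,w$ with $x\in\down{y}\setminus\down{w}$ and $z\in\down{w}\setminus\down{y}$; then $x<_P y$, $z<_P w$, $x\not<_P w$, $z\not<_P y$, and transitivity forces the remaining pairs among $\{x,y,z,w\}$ to be incomparable and the four elements to be distinct, so these elements induce a $\twoplustwo$, contradicting $(2)$.

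The substantive step is $(4)\Rightarrow(1)$, where I must build a representation. List the distinct down sets as a chain $\emptyset=A_0\subsetneq A_1\subsetneq\cdots\subsetneq A_m$. For $x\in X$, set $\ileft{x}=i$ where $\down{x}=A_i$, and set $\iright{x}=t_x-1$ where $t_x=\min\{j : x\in A_j\}$, interpreting $t_x=m+1$ if $x$ lies in no $A_j$. Since $x\notin\down{x}$ we have $t_x>\ileft{x}$, so $[\ileft{x},\iright{x}]$ is a legitimate closed interval; and since the $A_j$ are nested, $\{j : x\in A_j\}$ is up-closed in $\{0,\dots,m\}$, whence $x\in\down{y}=A_{\ileft{y}}$ iff $\ileft{y}\geq t_x$ iff $\iright{x}<\ileft{y}$. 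Thus $x<_P y\iff\iright{x}<\ileft{y}$, so $P$ is an interval order. Finally, $(5)$ comes essentially for free: reflecting all intervals through the origin turns a representation of $P$ into one of the dual poset $P^{*}$, and $\up{x}$ in $P$ equals $\down{x}$ in $P^{*}$, so condition $(5)$ for $P$ is condition $(4)$ for $P^{*}$, which is equivalent to $P^{*}$---and hence $P$---being an interval order. The only place requiring genuine care is the verification in $(4)\Rightarrow(1)$ that these endpoints recover $\leq_P$ exactly (in particular the off-by-one in $\iright{x}=t_x-1$); the infinite case, which is not needed here, would require ordering the down sets by a somewhat more delicate device.
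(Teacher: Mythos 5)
Your proof is correct, but there is nothing in the paper to compare it against: the theorem is stated as background and attributed to Fishburn \cite{fishburn:intords-char}, with no proof given. Taken on its own terms, your cycle $(1)\Rightarrow(3)\Rightarrow(2)\Rightarrow(4)\Rightarrow(1)$ plus the duality step for $(5)$ is sound. The forcing arguments are fine, and in $(2)\Rightarrow(4)$ you correctly verify (via transitivity) that all cross pairs among $\{x,y,z,w\}$ are incomparable, so the four points genuinely induce a $\twoplustwo$. The substantive step $(4)\Rightarrow(1)$ also checks out: since the nested chain of down sets makes $\{j : x\in A_j\}$ up-closed, your endpoints satisfy $x<_P y \iff x\in A_{\ileft{y}} \iff \ileft{y}\geq t_x \iff \iright{x}<\ileft{y}$, and $\iright{x}\geq\ileft{x}$ because $x\notin\down{x}$. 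It is worth noting that this construction is essentially the minimal endpoint representation the paper describes immediately after the theorem (there the right endpoint is read off from the chain of up sets rather than from $t_x-1$, but the two recipes produce interval orders in the same way). Your explicit restriction to finite $X$ is appropriate here, since the paper only ever applies the theorem to finite posets; just be aware that the statement for infinite posets with real intervals is genuinely more delicate, as you acknowledge.
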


The characterization of semiorders was actually arrived at earlier in the form of a result in mathematical logic by Scott and Suppes.
\begin{thm}[Scott and Suppes \cite{scott:semiorders}]\label{thm:scott-suppes}
  A poset $P$ is a semiorder if and only if $P$ contains neither $\twoplustwo$ nor $\oneplusthree$ as a subposet.
\end{thm}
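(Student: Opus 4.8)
The plan is to prove the two implications separately, with essentially all of the content in the ``if'' direction.

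For the ``only if'' direction, suppose $P$ is a semiorder. A same-length interval representation is in particular an interval representation, so $P$ is an interval order and $\thmref{thm:fishburn}$ already forbids $\twoplustwo$. To forbid $\oneplusthree$, rescale the representation so that every interval has length $1$ and write $I(x)=[\ileft{x},\ileft{x}+1]$; then $x<_Py$ is equivalent to $\ileft{x}+1<\ileft{y}$, while ``$x$ incomparable to $y$'' is equivalent to $\abs{\ileft{x}-\ileft{y}}\le 1$. If some $a$ were incomparable to a three‑element chain $b<_Pc<_Pd$, the two comparabilities would give $\ileft{d}-\ileft{b}>2$, whereas the incomparabilities of $a$ with $b$ and with $d$ together with the triangle inequality would give $\ileft{d}-\ileft{b}\le 2$; this contradiction shows $P$ contains no $\oneplusthree$.

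For the ``if'' direction, assume $P=(X,\leq_P)$ contains no $\twoplustwo$ and no $\oneplusthree$, and construct a same‑length representation. Since $P$ has no $\twoplustwo$ it is an interval order, so by $\thmref{thm:fishburn}$ the family $\set{\down{x}\colon x\in X}$ is a chain under inclusion; let $D_{\max}$ be its largest member. I would induct on $\abs{X}$. In the inductive step, pick $m\in X$ with $\down{m}=D_{\max}$; such an $m$ is automatically maximal (anything above it would have a strictly larger down set), and moreover $\set{z\in X\colon z\incomp m}$ is an antichain, since $w<_Pz$ with $z\incomp m$ would give $w\in\down{z}\subseteq\down{m}$ and hence $w<_Pm$. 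Now apply the induction hypothesis to $P-m$ to obtain a representation with all intervals of length $1$, and try to extend it by assigning $m$ a length‑$1$ interval $[t,t+1]$ at the far right. Because $m$ is maximal, its interval may not lie to the left of any other, so we need $t$ strictly larger than the right endpoint of every $z<_Pm$, and $[t,t+1]$ must meet the interval of every $z\incomp m$. Since the elements incomparable to $m$ form an antichain, in any length‑$1$ representation their left endpoints already lie within a window of length at most $1$, so such a $t$ exists precisely when, in the chosen representation of $P-m$, every element below $m$ has right endpoint strictly to the left of every element incomparable to $m$.

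That separation is the crux, and where I expect the difficulty to lie: it can fail for an arbitrary --- indeed even for the ``canonical'' --- representation of $P-m$, because an element $z<_Pm$ and an element $w\incomp m$ may themselves be incomparable, and then nothing internal to $P-m$ forces $z$'s interval to end before $w$'s. The remedy is to carry a strengthened statement through the induction, for instance ``$P$ has a length‑$1$ representation whose left‑ and right‑endpoint functions are strictly monotone with respect to the down‑set and up‑set chains,'' and to check both that such a representation makes the extension go through and that the extended representation still enjoys the property. Verifying that this stronger statement propagates is exactly where the hypothesis of no $\oneplusthree$ (equivalently, semitransitivity: $x<_Py<_Pz$ implies $x<_Pw$ or $w<_Pz$ for every $w$) is used --- a $\oneplusthree$ is precisely the configuration that would allow $z$ and $w$ above to be incomparable with $w$ sitting lower than $z$. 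An alternative, non‑inductive route that isolates the same point is to assign to each $x$ the value equal to the position of $\down{x}$ in the down‑set chain minus the position of $\up{x}$ in the up‑set chain, and to seek a threshold $T$ with $x<_Py$ if and only if the values differ by more than $T$; there the work is to prove that comparable pairs and incomparable pairs occupy disjoint ranges of value‑differences, and it is again the absence of $\oneplusthree$ that guarantees this.
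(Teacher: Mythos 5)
The paper itself does not prove this statement---it is the classical Scott--Suppes theorem, cited from the literature and then used as a black box (indeed, the converse half of the proof of Lemma~\ref{lem:min-repn-semi} invokes it)---so your argument has to stand on its own, and as written it does not. Your ``only if'' direction is complete and correct. The ``if'' direction, however, is a plan rather than a proof: you correctly reduce the inductive step to the separation condition that every $z<_P m$ must have its right endpoint strictly smaller than the right endpoint of every $w\incomp m$, you correctly observe that this can fail for a representation of $P-m$ chosen with no knowledge of $m$, and you then defer exactly the step that carries all of the content (``carry a strengthened statement through the induction \dots and check that it propagates''). That check is never performed, and it is precisely where the absence of \oneplusthree{} has to do real work.

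Moreover, the particular strengthening you propose---a unit-length representation whose endpoint functions are strictly monotone with respect to the down-set and up-set chains of the poset being represented---is insufficient as stated. Take $P=\mathbf{1}+\mathbf{2}$ with $m$ the top of the two-chain, $z$ below $m$, and $w$ the isolated point. Then $P-m$ is a two-element antichain in which $z$ and $w$ have identical (empty) down sets and up sets, so your strengthened hypothesis is vacuous and is satisfied by the representation giving both of them the interval $[0,1]$; this representation admits no valid $t$, since you need $t>r_z$ and $t\le r_w$ while $r_z=r_w$. To repair the argument you must either add a tie-breaking step (order-indistinguishable elements of $P-m$ may be swapped, together with a proof that ties are the only obstruction), or strengthen the induction hypothesis so that it refers to the down sets and up sets of $P$ rather than of $P-m$, or argue directly that the down-set order and the reversed up-set order are consistent total preorders and build the representation from a common refinement---this consistency is exactly where \oneplusthree-freeness enters, since $\down{x}\subsetneq\down{y}$ together with $\up{x}\subsetneq\up{y}$ forces a \oneplusthree{} through $x$. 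None of this appears in the proposal, so the hard direction remains unproved; the alternative ``threshold'' construction in your final sentence is likewise only announced, not established.
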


In \cite{greenough:intord-repn}, Greenough not only showed that when
$P=(X,\leq_P)$ is an interval order, the number of distinct down sets
of elements of $X$ is equal to the number of distinct up sets of
elements of $X$ but also gave an algorithm for generating a unique
interval representation using the smallest number of endpoints
possible. Although they did not discuss it in this manner, the
bijection of Bousquet-M\'elou et al.\ between ascent sequences and
interval orders (described in the next subsection) gives rise to such a representation, and such a representation will be central to our arguments. Thus, we briefly describe the algorithm and its critical properties here. To produce the representation, list the down sets of elements of $X$ as $D_0 \subsetneq D_1\subsetneq D_2\subsetneq\cdots\subsetneq D_{t-1}$, where $t$ is the number of distinct down sets (and hence up sets). Also list the up sets of elements of $X$ as $U_0\supsetneq U_1\supsetneq U_2\supsetneq \cdots\supsetneq U_{t-1}$. For $x\in X$, we define $I(x) = [i,j]$ where $\down{x} = D_i$ and $\up{x} = U_j$. Note that this may map distinct elements $x,y\in X$ to the same interval, which is allowed by our definition of interval representation. This happens if and only if $\down{x} = \down{y}$ and $\up{x} = \up{y}$. In this case, we say that $x$ and $y$ have \emph{duplicated holdings}. A poset in which no two elements have duplicated holdings is said to have \emph{no duplicated holdings}, sometimes abbreviated NODH.

In this article, we shall refer to the representation produced by the
algorithm described above as the \emph{minimal endpoint
  representation} of an interval order $P = (X,\leq_P)$. Because of
the manner in which the minimal endpoint representation is created, we
know that for each $i\in\set{0,\dots,t-1}$, there exist $x,y\in X$
such that $\ileft{x} = i$ and $\iright{y} = i$. That is, in a minimal
endpoint representation, every integer from $0$ to $t-1$ occurs as both a left endpoint and a right endpoint.

\begin{examp}
  To illustrate the algorithm for finding the minimal endpoint
  representation of an interval order, consider the poset shown in
  Figure~\ref{fig:intord-examp}. The down sets and up sets as ordered
  by the algorithm are listed below.
  \begin{align*}
    D_0 &= \empset & U_0 &= \set{b,c,d,y}\\
    D_1 &= \set{a} & U_1 &= \set{b,c,d}\\
    D_2 &=\set{a,x} & U_2 &=\set{c,d}\\
    D_3 &=\set{a,x,y} & U_3 &=\set{d}\\
    D_4 &=\set{a,c,x,y} &U_4 &=\empset
  \end{align*}
  Since $\down{x} = \empset$ and $\up{x} = \set{b,c,d}$, the algorithm tells us that in the minimal endpoint representation, $I(x) = [0,1]$ by locating the subscripts corresponding to these sets. Similarly, $\down{y} = \set{a}$ and $\up{y} = \set{c,d}$, so $I(y) = [1,2]$. The remaining four intervals of the minimal endpoint representation are found similarly, and the representation is depicted at the right in Figure~\ref{fig:intord-examp}.
  \begin{figure}[h]
    \centering
    \raisebox{-0.5\height}{\begin{tikzpicture}
      \draw (0,0) -- (0,1);
      \draw (1,0) -- (1,1) -- (1,2) -- (1,3);
      \draw (0,0) -- (1,2);
      \draw (0,1) -- (1,0);
      \draw [fill] (0,0) circle [radius=0.1];
      \draw [fill] (0,1) circle [radius=0.1];
      \draw [fill] (1,0) circle [radius=0.1];
      \draw [fill] (1,1) circle [radius=0.1];
      \draw [fill] (1,2) circle [radius=0.1];
      \draw [fill] (1,3) circle [radius=0.1];
      \node [left] at (0,0) {$x$};
      \node [left] at (0,1) {$b$};
      \node [right] at (1,0) {$a$};
      \node [right] at (1,1) {$y$};
      \node [right] at (1,2) {$c$};
      \node [right] at (1,3) {$d$};
    \end{tikzpicture}}
    \hspace*{0.3\linewidth}
    \raisebox{-0.5\height}{\begin{tikzpicture}
      \draw[{[-]}, thick] (0,0) -- (1,0);
      \draw[{[-]}, thick] (1,0.5) -- (2,0.5);
      \draw [thick] (0,0.37) -- (0,0.62);
      \draw [thick] (3,0.37) -- (3,0.62);
      \draw [thick] (4,0.37) -- (4,0.62);            
      \draw[{[-]}, thick] (2,0) -- (4,0);
      \node [below] at (0.5,0) {$x$};
      \node [below] at (3,0) {$b$};
      \node [above] at (0,0.62) {$a$};
      \node [above] at (1.5,0.62) {$y$};
      \node [above] at (3,0.62) {$c$};
      \node [above] at (4,0.62) {$d$};
    \end{tikzpicture}}
    \caption{An interval order and its minimal endpoint representation}
    \label{fig:intord-examp}
  \end{figure}
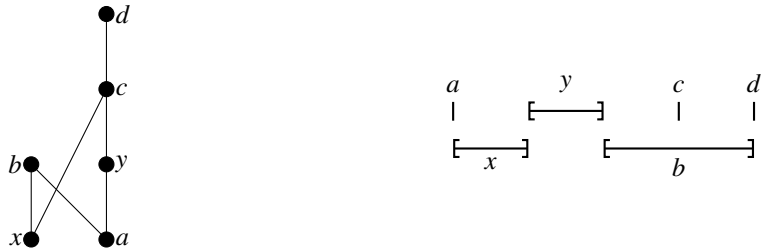
\end{examp}

Notice that when $P$ is a semiorder, its minimal endpoint representation is not necessarily one in which all intervals have the same length. The most straightforward example of this is $\mathbf{1}+\mathbf{2}$, which is shown in Figure~\ref{fig:1plus2-repn} along with its minimal endpoint representation.
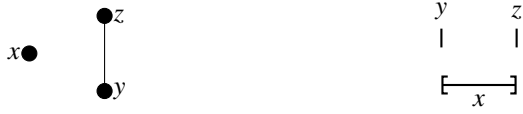
\begin{figure}[h]
  \centering
  \raisebox{-0.5\height}{\begin{tikzpicture}
    \draw [fill] (1,0) circle [radius=0.1];
    \draw [fill] (1,1) circle [radius=0.1];
    \draw [fill] (0,0.5) circle [radius=0.1];
    \node [left] at (0,0.5) {$x$};
    \node [right] at (1,0) {$y$};
    \node [right] at (1,1) {$z$};
    \draw (1,0) -- (1,1);
  \end{tikzpicture}}\hspace*{0.3\linewidth}
\raisebox{-0.5\height}{\begin{tikzpicture}
    \draw[{[-]}, thick] (0,0) -- (1,0);
    \draw[thick] (0,0.5) -- (0,0.75);
    \draw[thick] (1,0.5) -- (1,0.75);
    \node [below] at (0.5,0) {$x$};
    \node [above] at (0,0.75) {$y$};
    \node [above] at (1,0.75) {$z$};
  \end{tikzpicture}}
  \caption{The poset $\mbf{1}+\mbf{2}$ and its minimal endpoint representation}
  \label{fig:1plus2-repn}
\end{figure}

While the minimal endpoint representation of a semiorder does not have all intervals of the same length, there is a straightforward interval containment test to determine if a minimal endpoint representation of an interval order is one of a semiorder. We will frequently make use of the following lemma in this paper.
\begin{lem}\label{lem:min-repn-semi}
  An interval order $P$ is a semiorder if and only if its minimal endpoint representation does not include intervals $[a_1,b_1]$ and $[a_2,b_2]$ such that $a_1 < a_2$ and $b_2 < b_1$.
\end{lem}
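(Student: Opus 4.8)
The plan is to combine the Scott--Suppes characterization (\thmref{thm:scott-suppes}) with the defining property of an interval representation, that $x<_P y$ holds exactly when $\iright{x}<\ileft{y}$. Since $P$ is an interval order it already omits $\twoplustwo$, so $P$ is a semiorder precisely when it omits $\oneplusthree$. Hence it is enough to show that the minimal endpoint representation contains a properly nested pair of intervals $[a_1,b_1]\supsetneq[a_2,b_2]$---that is, with $a_1<a_2$ and $b_2<b_1$---if and only if $P$ contains an induced copy of $\oneplusthree$. In both directions the argument reduces to translating comparabilities and incomparabilities into inequalities among interval endpoints.

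For the implication that a properly nested pair forces a copy of $\oneplusthree$: suppose $I(u)=[a_1,b_1]$ and $I(v)=[a_2,b_2]$ with $a_1<a_2\le b_2<b_1$. Every endpoint lies in $\set{0,\dots,t-1}$, and the key structural feature of the minimal endpoint representation is that each value in that range occurs as \emph{both} a left endpoint and a right endpoint; so I may pick $p\in X$ with $\iright{p}=a_1$ and $q\in X$ with $\ileft{q}=b_1$. A short check of endpoint inequalities then shows $p<_P v<_P q$ (for instance $\iright{p}=a_1<a_2=\ileft{v}$ and $\iright{v}=b_2<b_1=\ileft{q}$), that $u$ is incomparable to each of $p$, $v$, $q$ because $I(u)$ meets all three of their intervals, and that $u,p,v,q$ are four distinct elements; so $u$ together with the chain $p<_P v<_P q$ realizes $\oneplusthree$, and $P$ is not a semiorder. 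I expect this to be the only delicate step, and the use of the minimal endpoint structure to be essential here: in an arbitrary interval representation of a non-semiorder the ``witness'' intervals of $p$ and $q$ need not be present, so one really must invoke that every integer from $0$ to $t-1$ appears as both a left and a right endpoint.

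For the converse, suppose $P$ contains $\oneplusthree$, witnessed by a chain $p<_P q<_P r$ together with an element $s$ incomparable to each of $p$, $q$, $r$. In the minimal endpoint representation the chain yields $\iright{p}<\ileft{q}\le\iright{q}<\ileft{r}$, while incomparability of $s$ with $p$ and with $r$ yields $\ileft{s}\le\iright{p}$ and $\ileft{r}\le\iright{s}$. Concatenating these gives $\ileft{s}<\ileft{q}$ and $\iright{q}<\iright{s}$, so $I(q)=[\ileft{q},\iright{q}]$ is properly nested inside $I(s)=[\ileft{s},\iright{s}]$---the forbidden configuration. Everything beyond the choice of witnesses (the endpoint inequalities and the distinctness of the four elements, each relevant pair being comparable or incomparable) is routine bookkeeping.
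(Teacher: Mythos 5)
Your proposal is correct and follows essentially the same route as the paper: both directions hinge on Scott--Suppes together with the minimality property that every endpoint value occurs as both a left and a right endpoint, the nested pair yielding a \oneplusthree{} via witnesses with right endpoint $a_1$ and left endpoint $b_1$, and the converse using the same chain of endpoint inequalities. No gaps.
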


\begin{proof}
  Let $P = (X,\leq_P)$ be an interval order. For the ``only if'' direction, we consider the contrapositive. The existence of intervals satisfying the conditions in the lemma means that the interval $[a_2,b_2]$ lies in the interior of $[a_1,b_1]$. Suppose that $I(x) = [a_1,b_1]$ and $I(z) = [a_2,b_2]$. Since the representation is minimal, we know that there are $y,w\in X$ such that $\iright{y} = a_1$ and $\ileft{w} = b_1$. Then $\set{x,y,z,w}$ is a \oneplusthree{} in $P$, so $P$ is not a semiorder.

  For the converse, suppose that $P$ is an interval order that is not a semiorder and let $\set{x,y,z,w}$ be a \oneplusthree{} in $P$ with $x$ incomparable to $y,z,w$ and $y < z < w$. Notice that \[\ileft{x}\leq \iright{y} < \ileft{z}\leq \iright{z} < \ileft{w}\leq \iright{x},\] which shows that $I(x)$ and $I(z)$ are the intervals we seek.
\end{proof}

The minimal endpoint representation of an interval
  order can also be encoded in matrix form. For NODH interval orders,
  Fishburn called these \emph{characteristic matrices} in
  \cite{fishburn:intordsbook}. They have been more recently studied by
Dukes and Parviainen in \cite{dukes:ascent-upper-triangular}; Dukes et al.\
in \cite{dukes:composition-matrices}; and
Jel\'inek in \cite{jelinek:catalan-pairs}. In
\cite{jelinek:catalan-pairs}, Jel\'inek studied the class of what he
calls \emph{Fishburn matrices} that extend to the case where
duplicated holdings are allowed. Our Lemma~\ref{lem:min-repn-semi} can
be recast in terms of matrices as in Proposition~16 of
\cite{dukes:composition-matrices}. Because the following work relies
on an understanding of the underlying minimal endpoint representation,
we choose not to further explore the matrix-based approach here.

If $P = (X,\leq_P)$ is a poset, we say that a total order $L$ on $X$ is a \emph{linear extension} of $P$ provided that for all $x,y\in X$, if $x\leq_P y$, then $x\leq y$ in $L$. The \emph{dimension} of $P$, denoted $\dim(P)$, is the least $d$ such that there exist linear extensions $L_1,L_2,\dots,L_d$ of $P$ such that (as sets of ordered pairs)
\begin{align*}\leq_P &= L_1\cap L_2\cap\cdots\cap L_d.\end{align*}
In \cite{bogart:canonical-intords}, Bogart et al.\ showed that for
every positive integer $d$, there exists an interval order having
dimension at least $d$. On the other hand, the situation for
semiorders is much more restricted. Rabinovitch showed in
\cite{rabinovitch:semiorder-dim} that if $P$ is a semiorder, then
$\dim(P)\leq 3$. Furthermore, $\dim(P) = 3$ if and only if $P$
contains one of the posets shown in
Figure~\ref{fig:semi-forb}. Rabinovitch's original version of this
result involved a limitation on the height of the semiorder. The independent work of Kelly in \cite{kelly:3-dim-irr} and Trotter
and Moore in \cite{trotter:3-dim-irr} provided a complete
characterization of the posets of dimension $3$. In light of their
results, Rabinovitch's three forbidden subposets for a semiorder to
have dimension at most $2$ was verified to be complete without
limitations as to height, as stated in
Corollary~3.3 of Trotter's monograph \cite{trotter:dimbook}.

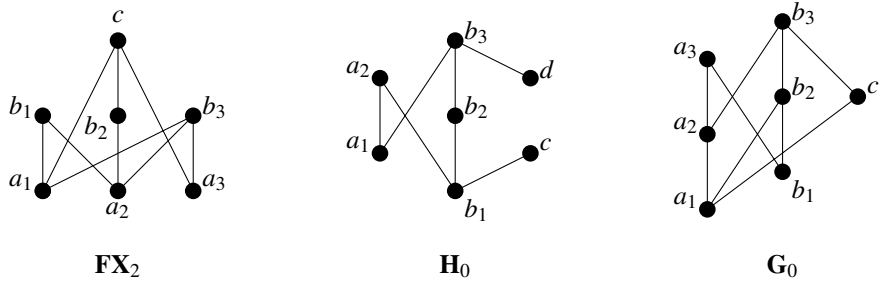
\begin{figure}[h]
  \centering
  \begin{tikzpicture}
    \draw (2,1) --(1,0) -- (1,1) -- (1,2);
    \draw (0,0) -- (0,1) -- (1,0);
    \draw (2,0) -- (2,1) -- (0,0) -- (1,2) -- (2,0);
    \draw [fill] (0,0) circle [radius=0.1];
    \node[vtx, label={[left] {$a_1$}}] at (0,0) {};
    \node[vtx, label={[left] {$b_1$}}] at (0,1) {};
    \node[vtx, label={[below, label distance = -0.15cm] {$a_2$}}] at (1,0) {};
    \node[vtx, label={[below left] {$b_2$}}] at (1,1) {};
    \node[vtx, label={[above] {$c$}}] at (1,2) {};
    \node[vtx, label={[right] {$a_3$}}] at (2,0) {};
    \node[vtx, label={[right] {$b_3$}}] at (2,1) {};
    \node at (1,-1) {$\mbf{FX}_2$};
  \end{tikzpicture}\hspace*{0.1\linewidth}
  \begin{tikzpicture}
    \draw (2,0.5) -- (1,0) -- (1,1) -- (1,2) -- (2,1.5);
    \draw (1,0) -- (0,1.5) -- (0,0.5) -- (1,2);
    \node [vtx, label={[left] {$a_1$}}] at (0,0.5) {};
    \node [vtx, label={[left] {$a_2$}}] at (0,1.5) {};
    \node [vtx, label={[below right, label distance=-0.1cm] {$b_1$}}] at (1,0) {};
    \node [vtx, label={[right] {$b_2$}}] at (1,1) {};
    \node [vtx, label={[right] {$b_3$}}] at (1,2) {};
    \node [vtx, label={[right] {$c$}}] at (2,0.5) {};
    \node [vtx, label={[right] {$d$}}] at (2,1.5) {};
    \node at (1,-1) {$\mbf{H}_0$};
  \end{tikzpicture}\hspace*{0.1\linewidth}
  \begin{tikzpicture}
    \draw (0,0) -- (0,1) -- (0,2) -- (1,0.5) -- (1,1.5) -- (1,2.5) -- (2,1.5) -- (0,0) -- (1,1.5);
    \draw (0,1) -- (1,2.5);
    \node [vtx, label={[left] {$a_1$}}] at (0,0) {};
    \node [vtx, label={[left] {$a_2$}}] at (0,1) {};
    \node [vtx, label={[left] {$a_3$}}] at (0,2) {};
    \node [vtx, label={[below right, label distance=-0.1cm] {$b_1$}}] at (1,0.5) {};
    \node [vtx, label={[right] {$b_2$}}] at (1,1.5) {};
    \node [vtx, label={[right] {$b_3$}}] at (1,2.5) {};
    \node [vtx, label={[right] {$c$}}] at (2,1.5) {};
    \node at (1,-0.75) {$\mbf{G}_0$};    
  \end{tikzpicture}
  \caption{The three subposets that can force a semiorder to have dimension $3$}
  \label{fig:semi-forb}
\end{figure}

For more information on interval orders and semiorders, Fishburn's monograph \cite{fishburn:intordsbook} is a classic while Trotter's survey article \cite{trotter:intords} provides a more recent look. The canonical work on dimension theory for posets is Trotter's monograph \cite{trotter:dimbook}. The labels on the posets in Figure~\ref{fig:semi-forb} follow Trotter's notation, but we note that there is an error in his list of the forbidden subposets for a semiorder to have dimension at most $2$. On page 197, he lists $\mbf{FX}_1$ in addition to the three given here, but $\mbf{FX}_1$ is not a semiorder since $\set{b_1,a_3,b_2,c}$ is a \oneplusthree.
\subsection*{Ascent sequences and enumeration}

Given a sequence $(x_1,x_2,\dots,x_i)$ of integers, the number of \emph{ascents} in the sequence is defined to be \[\asc(x_1,\dots,x_i) = \left|\set{1\leq j < i\colon x_j < x_{j+1}}\right|.\] In \cite{bousquet:intord-enum}, Bousquet-M\'elou et al.\ defined an \emph{ascent sequence} to be a sequence $(x_1,\dots,x_n)$ of nonnegative integers such that $x_1 = 0$ and $x_i\in [0,1+\asc(x_1,\dots,x_{i-1})]$ for all $2\leq i\leq n$. They then defined a map $\Psi$ from (unlabeled) interval orders on $n$ points to  ascent sequences of length $n$ and showed that their function is a bijection. Here, we recast the inverse of that bijection as a way to construct the minimal endpoint representation of an interval order on $n$ points from an ascent sequence of length $n$.

The process of constructing the interval order corresponding to an ascent sequence proceeds iteratively through the ascent sequence. The simplest ascent sequence, $(0)$, corresponds to the minimal endpoint representation $[0,0]$. To describe the algorithm, we assume that we have an ascent sequence $(x_1,\dots,x_n)$ with $n\geq 2$ and have constructed the interval order $Q$ corresponding to the ascent sequence $(x_1,\dots,x_{n-1})$. We retain some of the notation from \cite{bousquet:intord-enum} by letting $\lQ{Q}$ denote the \emph{largest right} endpoint of an interval in the minimal endpoint representation of $Q$. We also let $\lQ[*]{Q}$ denote the \emph{smallest left} endpoint of an interval \emph{with right endpoint} $\lQ{Q}$ (again, in the minimal endpoint representation of $Q$). Suppose now that $x_n = i$. We obtain the minimal endpoint representation of the interval order $P$ corresponding to $(x_1,\dots,x_n)$ by applying one of the following three moves:

\begin{itemize}
\item[\moveone] If $i \leq \lQ[*]{Q}$ add the interval $[i,\lQ{Q}].$
\item[\movetwo] If $i = \lQ{Q}+1$ add $[\lQ{Q}+1,\lQ{Q}+1]$.
\item[\movethree] If $\lQ[*]{Q} < i \leq \lQ{Q}$,
\begin{itemize}
%\item If $\lambda < i \leq \rho < \lQ{Q}$, replace $[\lambda,\rho]$
%  with $[\lambda,\rho+1]$.
  \item Replace every interval $[\lambda,\rho]$ for which $\lambda < i \leq \rho < \lQ{Q}$ with $[\lambda,\rho+1]$.
%\item If $i \leq \lambda \leq \rho \leq \lQ{Q}$, replace $[\lambda,\rho]$ with
%  $[\lambda+1,\rho+1]$.
     \item Replace every interval
    $[\lambda,\rho]$ for which $i \leq \lambda \leq \rho \leq \lQ{Q}$ with
  $[\lambda+1,\rho+1]$.
%\item If $\lambda < i$ and $\rho = \lQ{Q}$, replace $[\lambda,\rho]$
%  with $[\lambda,i]$.
  \item Replace every interval $[\lambda,\rho]$ for which $\lambda < i$ and $\rho = \lQ{Q}$ with $[\lambda,i]$.
\item Add the interval $[i, \lQ{Q}+1]$.
\end{itemize}
\end{itemize}

We know that the minimal endpoint representation of $Q$ contains $[\lQ[*]{Q},\lQ{Q}]$, and so \moveone{} adds another maximal element
to the poset whose interval extends at least as far left as
$\lQ[*]{Q}$ so that $\lQ[*]{P} = i$. If $i = \lQ[*]{Q}$, then \moveone{} merely adds another
point to the poset that has $[\lQ[*]{Q},\lQ{Q}]$ as its interval in the minimal endpoint
representation. This gives rise to a pair of points with duplicated
holdings. (This use of \moveone{} only occurs when $i = x_n =
x_{n-1}$, and this is the only way to create duplicated holdings. We
will frequently use this fact in our enumerative work later in the
paper.) \movetwo{} adds a new trivial interval that becomes the unique
maximal element in $P$. \movethree{} is the problematic move when it
comes to working with semiorders. Its effect is to increase the
largest right endpoint by one so that $\lQ{P} = \lQ{Q} + 1$ while
inserting a new endpoint at $i$. Any interval with its left endpoint
less than $i$ and its right endpoint at least $i$ has its right
endpoint moved one unit to the right. Any interval that had its left
endpoint being $i$ or larger is shifted to the right by one unit. Any
interval that corresponds to a maximal element in $Q$ (which is
equivalent to having right endpoint $\lQ{Q}$) is truncated by
retaining its left endpoint but making its right endpoint $i$. A new
interval $[i,\lQ{Q} + 1]$ is then inserted, which ensures that the
representation is minimal by having $i$ used as both a left and right
endpoint. We illustrate \movethree{} in Figure~\ref{fig:movethree} with intervals in the
    representation of $Q$ illustrated above and the corresponding
    intervals (in the same relative positions)
    and new interval (shown uppermost) of $P$ below.
  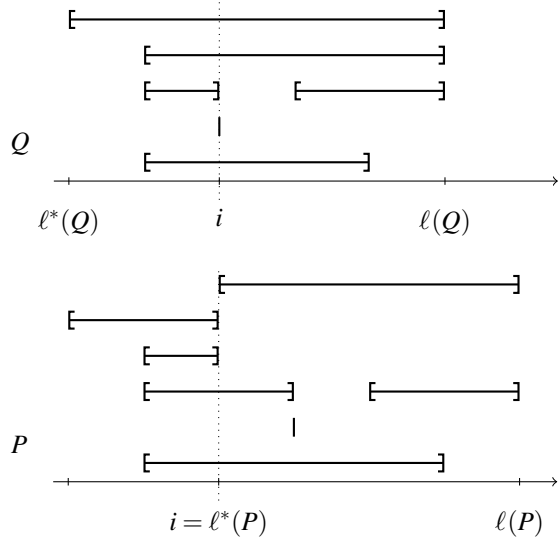
\begin{figure}[h]
    \centering
    \begin{tikzpicture}
      \node at (-0.65,0.25) {$Q$};
    \draw[thick] (2,0.35) -- (2,0.6);
    \draw[{[-]}, thick] (1,0) -- (4,0);
    \draw[{[-]}, thick] (1,0.95) -- (2,0.95);
    \draw[{[-]}, thick] (3,0.95) -- (5,0.95);
    \draw[{[-]}, thick] (1,1.425) -- (5,1.425);
    \draw[{[-]}, thick] (0,1.9) -- (5,1.9);
%    \node [above] at (2.5,1.5) {$\bullet$};
    \draw[->] (-0.2,-0.25) -- (6.5,-0.25);
    \draw[dotted] (2,-0.5) -- (2,2.2);
    \foreach \x in {0,2,5}
       \draw (\x,-0.3) -- (\x,-0.2);% node [below] {\scriptsize\x};
       \node [below] at (0,-0.5) {$\lQ[*]{Q}$};
           \node [below] at (2,-0.5) {$i$};
    \node [below] at (5,-0.5) {$\lQ{Q}$};
  \end{tikzpicture}\\\vspace*{2ex}
      \begin{tikzpicture}
      \node at (-0.65,0.25) {$P$};
    \draw[thick] (3,0.35) -- (3,0.6);
    \draw[{[-]}, thick] (1,0) -- (5,0);
    \draw[{[-]}, thick] (1,0.95) -- (3,0.95);
    \draw[{[-]}, thick] (4,0.95) -- (6,0.95);
    \draw[{[-]}, thick] (1,1.425) -- (2,1.425);
    \draw[{[-]}, thick] (0,1.9) -- (2,1.9);
    \draw[{[-]}, thick] (2,2.375) -- (6,2.375);
        \draw[dotted] (2,-0.5) -- (2,2.55);
%    \node [above] at (2.5,1.5) {$\bullet$};
    \draw[->] (-0.2,-0.25) -- (6.5,-0.25);
    \foreach \x in {0,2,6}
       \draw (\x,-0.3) -- (\x,-0.2);% node [below] {\scriptsize\x};
%       \node [below] at (0,-0.5) {$\lQ[*]{P}$};
           \node [below] at (2,-0.5) {$i=\lQ[*]{P}$};
    \node [below] at (6,-0.5) {$\lQ{P}$};
     \end{tikzpicture}
  \caption{The effect of a \movethree{}}
    \label{fig:movethree}
  \end{figure}

\begin{examp}
  We illustrate the process of constructing the minimal endpoint representation of the interval order corresponding to the ascent sequence $(0,1,2,3,1,0,1,3)$. We will denote by $Q_i$ the interval order corresponding to the first $i$ terms of the given ascent sequence. We know that $Q_1$ is represented by $\set{[0,0]}$ and $\lQ{Q_1} = \lQ[*]{Q_1} = 0$. Thus, $Q_2$ is represented by $\set{[0,0],[1,1]}$ through a \movetwo. The next two moves are also \movetwo, which leads us to $Q_4$ being represented by $\set{[0,0],[1,1],[2,2],[3,3]}$. We have $\lQ{Q_4} = \lQ[*]{Q_4} = 3$. Thus, to form $Q_5$, we apply \moveone, which adds the interval $[1,3]$, giving the representation illustrated in Figure~\ref{fig:moves-examp}. (At each stage, we place a $\bullet$ above the new interval added at that stage.) 
  \begin{figure}[h]
    \centering
    \begin{tikzpicture}
      \node at (-0.65,0.25) {$Q_5$};
    \draw[thick] (0,0.5) -- (0,0.75);
    \draw[thick] (1,0.5) -- (1,0.75);
    \draw[thick] (2,0.5) -- (2,0.75);
    \draw[thick] (3,0.5) -- (3,0.75);
    \draw[{[-]}, thick] (1,0) -- (3,0);
    \node [above] at (2,0) {$\bullet$};
    \draw[->] (-0.2,-0.25) -- (3.5,-0.25);
    \foreach \x in {0,1,2,3}
    \draw (\x,-0.3) -- (\x,-0.2) node [below] {\scriptsize\x};
    \node [below] at (1,-0.5) {$\lQ[*]{Q_5}$};
    \node [below] at (3,-0.5) {$\lQ{Q_5}$};
  \end{tikzpicture}\hspace*{0.15\linewidth}
    \begin{tikzpicture}
      \node at (-0.65,0.25) {$Q_6$};
    \draw[thick] (0,0.5) -- (0,0.75);
    \draw[thick] (1,0.5) -- (1,0.75);
    \draw[thick] (2,0.5) -- (2,0.75);
    \draw[thick] (3,0.5) -- (3,0.75);
    \draw[{[-]}, thick] (1,0) -- (3,0);
    \draw[{[-]}, thick] (0,1.1) -- (3,1.1);
    \node [above] at (1.5,1.1) {$\bullet$};
    \draw[->] (-0.2,-0.25) -- (3.5,-0.25);
    \foreach \x in {0,1,2,3}
    \draw (\x,-0.3) -- (\x,-0.2) node [below] {\scriptsize\x};
    \node [below] at (0,-0.5) {$\lQ[*]{Q_6}$};
    \node [below] at (3,-0.5) {$\lQ{Q_6}$};
     \end{tikzpicture}\\
     \vspace{20pt}
    \begin{tikzpicture}
      \node at (-0.65,0.25) {$Q_7$};
    \draw[thick] (0,0.5) -- (0,0.75);
    \draw[thick] (2,0.5) -- (2,0.75);
    \draw[thick] (3,0.5) -- (3,0.75);
    \draw[thick] (4,0.5) -- (4,0.75);
    \draw[{[-]}, thick] (2,0) -- (4,0);
    \draw[{[-]}, thick] (0,1.1) -- (1,1.1);
    \draw[{[-]}, thick] (1,1.5) -- (4,1.5);
    \node [above] at (2.5,1.5) {$\bullet$};
    \draw[->] (-0.2,-0.25) -- (4.5,-0.25);
    \foreach \x in {0,1,2,3,4}
       \draw (\x,-0.3) -- (\x,-0.2) node [below] {\scriptsize\x};
    \node [below] at (1,-0.5) {$\lQ[*]{Q_7}$};
    \node [below] at (4,-0.5) {$\lQ{Q_7}$};
     \end{tikzpicture}
  \hspace*{0.03\linewidth}
  \begin{tikzpicture}
      \node at (-0.65,0.25) {$Q_8$};
    \draw[thick] (0,0.5) -- (0,0.75);
    \draw[thick] (2,0.5) -- (2,0.75);
    \draw[thick] (4,0.5) -- (4,0.75);
    \draw[thick] (5,0.5) -- (5,0.75);
    \draw[{[-]}, thick] (2,0) -- (3,0);
    \draw[{[-]}, thick] (0,1.1) -- (1,1.1);
    \draw[{[-]}, thick] (1,1.5) -- (3,1.5);
    \draw[{[-]}, thick] (3,1.1) -- (5,1.1);
    \node [above] at (4,1.1) {$\bullet$};
    \draw[->] (-0.2,-0.25) -- (5.2,-0.25);
    \foreach \x in {0,1,2,3,4,5}
    \draw (\x,-0.3) -- (\x,-0.2) node [below] {\scriptsize\x};
    \node [below] at (3,-0.5) {$\lQ[*]{Q_8}$};
    \node [below] at (5,-0.5) {$\lQ{Q_8}$};
  \end{tikzpicture}
  \caption{Constructing the minimal endpoint representation of the interval order corresponding to ascent sequence $(0,1,2,3,1,0,1,3)$}
    \label{fig:moves-examp}
  \end{figure}
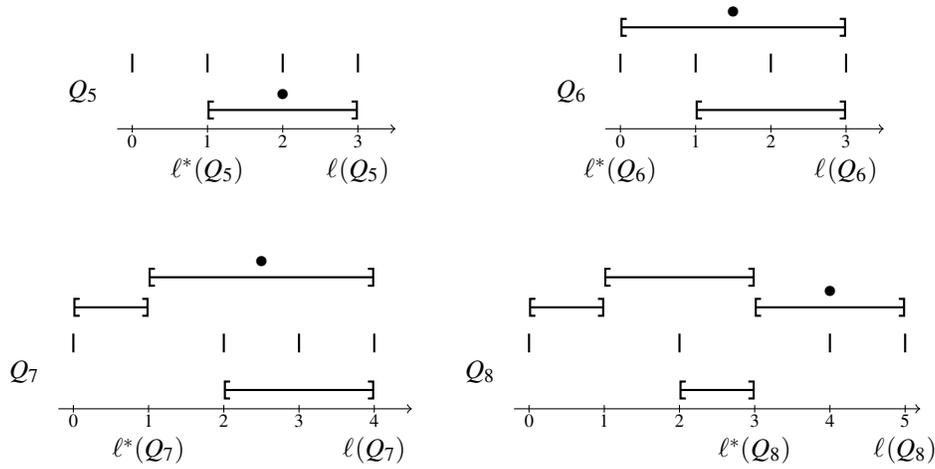
We also form $Q_6$ by using \moveone{} and have $\lQ{Q_6} = 3$ and $\lQ[*]{Q_6} = 0$. This means that $Q_7$ must be formed by using \movethree, as depicted in the figure. The interval of length two and the three trivial intervals are shifted right, while the interval $[0,3]$ is truncated to become $[0,1]$ and the new interval is $[1,4]$. This leaves us with $\lQ{Q_7} = 4$ and $\lQ[*]{Q_7} = 1$, so finishing requires another \movethree. This \movethree{} shifts two intervals, stretches no intervals, and truncates two intervals.
\end{examp}

Since the work of Bousquet-M\'elou et al., a variety of results
building on their work have been published. Many of them relate to
pattern-avoiding permutations and specialized classes of ascent
sequences. However, it is worth highlighting some of those with
connections to posets. For instance, in \cite{kitaev:intord-stats},
Kitaev and Remmel enumerated interval orders by number of minimal
elements (and other statistics). They also identified a subset of the
ascent sequences that they termed the \emph{restricted ascent
  sequences} and showed that the number of ascent sequences of length
$n$ is enumerated by the $n$\textsuperscript{th} Catalan
number. However, the bijection between ascent sequences and interval
orders does not send the restricted ascent sequences to the
semiorders, and the authors were unable to characterize the interval
orders corresponding to  the restricted ascent sequences. They did conjecture a refined version of their generating function for enumeration by number of minimal elements, which was proved independently by Levande in \cite{levande:fishburn-diagrams} and Yan in \cite{yan:intord-enum}. Dukes et al.\ looked at enumeration by the number of indistinguishable elements in \cite{dukes:intord-indist}, while Khamis enumerated the number of interval orders with no duplicated holdings by height in \cite{khamis:intord-height}. The focus of Jel\'inek's work in \cite{jelinek:self-dual} was to enumerate the number of self-dual interval orders. Claesson and Linusson looked at connections between various classes of matchings and interval orders in \cite{claesson:matchings-posets}. Disanto et al.\ looked at some problems involving generating and enumerating series parallel interval orders and semiorders in \cite{disanto:intords}.

As mentioned earlier, the number of semiorders on $n$ points is the
$n$\textsuperscript{th} Catalan number, as shown by Wine and Freund in
\cite{wine:semiorder-enum} and Dean and G.\ Keller in
\cite{dean:semiorder-enum}. Greenough showed in
\cite{greenough:intord-repn} that the number of semiorders on $n$
points with no
duplicated holdings is given by
\[s(n) =
  \sum_{a=0}^{\lfloor\frac{n-1}{2}\rfloor}\binom{n-1}{a,n-1-2a,a} -
  \sum_{a=2}^{\lfloor\frac{n+1}{2}\rfloor}\binom{n-1}{a,n+1-2a,a-2},\]
where the terms of the sums are multinomial coefficients.  More
recently, Lewis and Zhang were able to enumerate the number of graded
posets (not interval orders) that do not contain \oneplusthree{} in
\cite{lewis:graded-no-oneplusthree}, which was followed by the work of
Guay-Paquet et al.\ in \cite{guay-paquet:nothreeplusone}. An
enumeration of semiorders by length (one less than the number of
elements in a maximum chain) was given by Hu in
\cite{hu:semiorders-height}. The only work we are aware of that
enumerates any class of posets by dimension is the work of El-Zahar
and Sauer in \cite{elzahar-enum-2-dim}, where they provide an
asymptotic enumeration of two-dimensional posets. Our work in the
remainder of the paper will be restricted to unlabeled semiorders. We
will proceed to define what we call the hereditary semiorders and
characterize their structure. This structure then gives a way to
access the semiorders of dimension $2$. Our enumeration will proceed
by looking at the ascent sequences corresponding to these classes of
semiorders. The sequences of integers produced as a result were not in
OEIS prior to this work.

\section{Block Structure of Hereditary Semiorders}

It is straightforward to verify that the ascent sequence $(0, 1, 0, 1, 2, 0)$ has minimal endpoint representation as an interval order as shown in Figure~\ref{fig:nonhered1} under the bijection $\Psi\inv$ of Bousquet-M\'elou et al. By Lemma~\ref{lem:min-repn-semi}, we can tell that this is not a semiorder.
\begin{figure}[h]
  \centering
    \begin{tikzpicture}
%      \node at (-0.65,0.25) {$Q_8$};
    \draw[thick] (0,0.35) -- (0,0.6);
    \draw[thick] (3,0.35) -- (3,0.6);
    \draw[{[-]}, thick] (0,0) -- (3,0);
    \draw[{[-]}, thick] (1,0.475) -- (2,0.475);
    \draw[{[-]}, thick] (0,0.95) -- (1,0.95);
    \draw[{[-]}, thick] (2,0.95) -- (3,0.95);

    \draw[->] (-0.2,-0.25) -- (3.2,-0.25);
    \foreach \x in {0,1,2,3}
       \draw (\x,-0.3) -- (\x,-0.2) node [below] {\scriptsize\x};
  \end{tikzpicture}
  \caption{The interval order corresponding to $(0,1,0,1,2,0)$}
  \label{fig:nonhered1}
\end{figure}
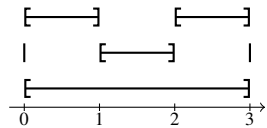
Progressing to the ascent sequence $(0, 1, 0, 1, 2, 0,2)$ requires a \movethree, however, which destroys the \oneplusthree, giving us the minimal endpoint representation depicted in Figure~\ref{fig:nonhered2}. Since no interval is contained in the interior of any other interval, we know that this \emph{is} a semiorder. This dilemma leads us to make the following definition.
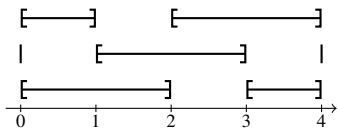
\begin{figure}[h]
  \centering
    \begin{tikzpicture}
%      \node at (-0.65,0.25) {$Q_8$};
    \draw[thick] (0,0.35) -- (0,0.6);
    \draw[thick] (4,0.35) -- (4,0.6);
    \draw[{[-]}, thick] (0,0) -- (2,0);
    \draw[{[-]}, thick] (1,0.475) -- (3,0.475);
    \draw[{[-]}, thick] (0,0.95) -- (1,0.95);
    \draw[{[-]}, thick] (3,0) -- (4,0);
    \draw[{[-]}, thick] (2,0.95) -- (4,0.95);
    \draw[->] (-0.2,-0.25) -- (4.2,-0.25);
    \foreach \x in {0,1,2,3,4}
       \draw (\x,-0.3) -- (\x,-0.2) node [below] {\scriptsize\x};
  \end{tikzpicture}
  \caption{The semiorder corresponding to $(0,1,0,1,2,0,2)$}
  \label{fig:nonhered2}
\end{figure}

\begin{defn}\label{defn:hereditary}
  Let $P$ be a semiorder on $n$ points, and let $(x_1,\dots,x_n)$ be the ascent sequence corresponding to $P$ under $\Psi$. We say that $P$ is \emph{hereditary} provided that for every $i$ with $1\leq i\leq n$, $\Psi\inv((x_1,\dots,x_i))$ is a semiorder.
\end{defn}

As we will show, the hereditary semiorders have a particularly nice structure in terms of their minimal endpoint representations. We describe this structure as being built from certain fundamental blocks with three types of boundary options for how different blocks can be combined to form a larger hereditary semiorder.

\begin{defn}\label{defn:blocks}
  The fundamental blocks we use to characterize the hereditary semiorders are as given below. Throughout, $b$ is a nonnegative integer and $k$ is an integer.
  \begin{align*}
    T_0 &= \set{[0,0]}\\
    T_1^b &= \set{[b,b],[b+1,b+1]}\\
    W_k^b &=\set{[b,b],[b+k,b+k]}\cup\bigcup_{i=0}^{k-1}\set{[b+i,b+i+1]}\quad\text{for }k\geq 1\\
    C_2^b &= \set{[b,b],[b,b+1],[b,b+2],[b+1,b+2],[b+2,b+2]}\\
    U_k^b &= \bigcup_{i=0}^{k-1} \set{[b,b+i],[b+k-i,b+k]}\quad\text{for }k\geq 3\\
    C_k^b &= \set{[b,b+k]}\cup U_k^b\quad\text{for }k\geq 3
  \end{align*}
  We refer to $T_0$ as the \emph{trivial block}. A \emph{nontrivial
    block} is any block that is not $T_0$. We will occasionally omit
  the superscript and refer to $T_1$ if the position of the block is clear from context. If we wish to refer to a generic block of the form $W_k^b$, we will use $\Wb$. For $U_k^b$, we will write $\Ub$, and for $C_k^b$ we will write $\Cb$. By $\Bb$, we will mean a block that could either be a $\Cb$ or a $\Ub$.
\end{defn}

We give sample illustrations of some of the blocks defined in
Definition~\ref{defn:blocks} in Figure~\ref{fig:sample-blocks}.
\begin{figure}[h]
  \centering
      \begin{tikzpicture}
    \draw[thick] (0,-0.125) -- (0,0.125);
    \draw[{[-]}, thick] (1,0) -- (4,0);

    \draw[{[-]}, thick] (0,0.475) -- (1,0.475);
    \draw[{[-]}, thick] (2,0.475) -- (4,0.475);
    
    \draw[{[-]}, thick] (0,0.95) -- (2,0.95);
    \draw[{[-]}, thick] (3,0.95) -- (4,0.95);

    \draw[{[-]}, thick] (0,1.425) -- (3,1.425); 
    \draw[thick] (4,1.3) -- (4,1.55);

    \draw[->] (-0.2,-0.25) -- (4.2,-0.25);
    \foreach \x in {0,1,2,3,4}
    \draw (\x,-0.3) -- (\x,-0.2) node [below] {\scriptsize\x};

    \node [below] at (2,-0.7) {$U_4^0$};
  \end{tikzpicture}\hspace*{0.1\linewidth}
      \begin{tikzpicture}
    \draw[thick] (0,-0.125) -- (0,0.125);
    \draw[{[-]}, thick] (1,0) -- (4,0);

    \draw[{[-]}, thick] (0,0.475) -- (1,0.475);
    \draw[{[-]}, thick] (2,0.475) -- (4,0.475);
    
    \draw[{[-]}, thick] (0,0.95) -- (2,0.95);
    \draw[{[-]}, thick] (3,0.95) -- (4,0.95);

    \draw[{[-]}, thick] (0,1.425) -- (3,1.425); 
    \draw[thick] (4,1.3) -- (4,1.55);

    \draw[{[-]}, thick] (0,1.9) -- (4,1.9); 
    \draw[->] (-0.2,-0.25) -- (4.2,-0.25);
    \foreach \x in {0,1,2,3,4}
    \draw (\x,-0.3) -- (\x,-0.2) node [below] {\scriptsize\x};

    \node [below] at (2,-0.7) {$C_4^0$};
  \end{tikzpicture}\\%
      \begin{tikzpicture}
    \draw[thick] (0,-0.125) -- (0,0.125);
    \draw[{[-]}, thick] (1,0) -- (2,0);

    \draw[{[-]}, thick] (0,0.475) -- (1,0.475);
    \draw[{[-]}, thick] (2,0.475) -- (3,0.475);

    \draw[thick] (3,-0.125) -- (3,0.125);

    \draw[->] (-0.2,-0.25) -- (3.2,-0.25);
    \foreach \x in {0,1,2,3}
    \draw (\x,-0.3) -- (\x,-0.2) node [below] {\scriptsize\x};

    \node [below] at (1.5,-0.7) {$W_3^0$};
  \end{tikzpicture}
  \caption{The blocks $U_4^0$, $C_4^0$, and $W_3^0$}
  \label{fig:sample-blocks}
\end{figure}
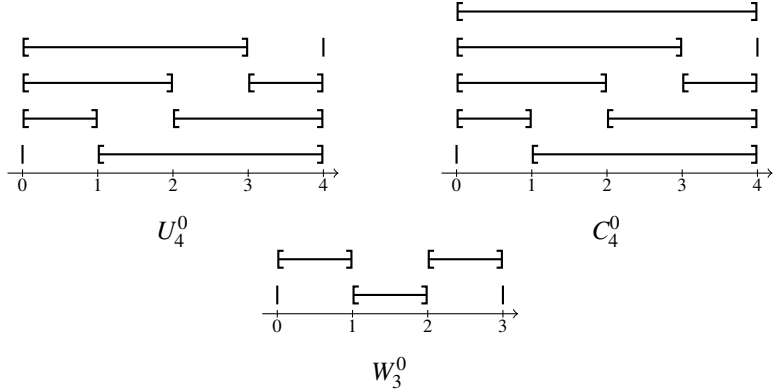

Our definitions of the blocks is in terms of sets, but we have already discussed the fact that a semiorder with duplicated holdings will have multiple elements associated to the same interval. Because we know how duplicated holdings arise in terms of the ascent sequence, we will be able to disregard such issues in terms of the block structure, stating our results in terms of the intervals appearing in the interval representation and implicitly allowing multiple points of the semiorder to have the same interval associated. We will, however, be able to readily address duplicated holdings when we get to our enumerative results later in the paper.

Next we define ways in which blocks can be combined. Note that the sum
of the subscript and superscript on any block gives the largest
endpoint of an interval in the block (and the superscript is the
smallest endpoint).

\begin{defn}\label{defn:boundaries}
  Let $A_{k_1}^{b}$ and $A_{k_2}^{k_1+b}$ be nontrivial blocks. We
  combine $A_{k_1}^b$ and $A_{k_2}^{k_1+b}$ with a \emph{strong
    boundary}, denoted by $A_{k_1}^b\sbd A_{k_2}^{k_1+b}$, by taking
  the set-theoretic union of the two blocks. The intersection between
  these two blocks is only the interval $[b+k_1,b+k_1]$. We can join
  two blocks (neither $T_1^b$ and not both $\Wb$) with a \emph{weak
    boundary}, denoted by $A_{k_1}^b\wbd A_{k_2}^{k_1+b}$, by removing
  $[b+k_1,b+k_1]$ from $A_{k_1}^b\cup A_{k_2}^{k_1+b}$. When
  $A_{k_1}^b$ is a $\Cb$ or $\Ub$ and $A_{k_2}^{k_1+b}$ is a $\Wb$ or
  $C_2^{k_1+b}$, we must also allow a \emph{weak boundary with
    optional element} (or \emph{optional interval}), which we denote
  by $A_{k_1}^b\wbdo A_{k_2}^{k_1+b}$. The intervals in
  $A_{k_1}^b\wbdo A_{k_2}^{k_1+b}$ are
  the same as with $A_{k_1}^b \wbd A_{k_2}^{k_1+b}$ with the addition
  of the interval $[k_1+b-1,k_1+b+1].$
\end{defn}

Note that weak boundaries are not permitted when one of the blocks is $T_1^b$,
since we would not be left with a minimal endpoint representation. We also do
not permit a weak boundary between two $\Wb$, since such a
construction would simply produce a $\Wb$ with larger subscript. From the definitions alone, it is not clear that it is sufficient to define a weak boundary with optional element only in the restricted cases given in Definition~\ref{defn:boundaries}. However, our argument will show that such a boundary cannot occur elsewhere. The block $T_0$ exists only to account for the antichain poset in which no two distinct points are comparable to one another, and $T_0$ cannot be combined with other blocks.

\begin{examp}
  In Figure~\ref{fig:blocks-examp}, we now illustrate the intervals of a semiorder with block structure
  \[C_3^0\wbdo W_2^3\sbd U_3^5 \wbd W_1^8 \sbd T_1^9.\]
  Notice that the interval $[3,3]$, which would be present in $C_3^0$, is absent because of the weak boundary. The interval $[8,8]$ is also omitted because of a weak boundary. The interval $[2,4]$ that bridges the boundary between the first two blocks is the optional interval.
  \begin{figure}[h]
    \centering
    \begin{tikzpicture}
      \draw[thick] (0,0.35) -- (0,0.6);
      \draw[thick] (9,0.35) -- (9,0.6);
      \draw[thick] (10,0.35) -- (10,0.6);
      \draw[{[-]}, thick] (1,0.475) -- (3,0.475);
      \draw[{[-]}, thick] (5,0.475) -- (6,0.475);
      \draw[{[-]}, thick] (7,0.475) -- (8,0.475);
      
      \draw[{[-]}, thick] (0,0) -- (1,0);
      \draw[{[-]}, thick] (2,0) -- (3,0);
      \draw[{[-]}, thick] (4,0) -- (5,0);
      \draw[{[-]}, thick] (8,0) -- (9,0); 

      \draw[{[-]}, thick] (0,0.95) -- (2,0.95);
      \draw[{[-]}, thick] (3,0.95) -- (4,0.95);
      \draw[{[-]}, thick] (6,0.95) -- (8,0.95);      
      \draw[thick] (5,0.825) -- (5,1.075);      

      \draw[{[-]}, thick] (0,1.425) -- (3,1.425);
      \draw[{[-]}, thick] (5,1.425) -- (7,1.425);

      \draw[{[-]}, thick] (2,1.9) -- (4,1.9);

      \foreach \bdry in {3,5,8,9}
      \draw[dotted] (\bdry,0) -- (\bdry,2.5);

      \node [above] at (1.5,2.1) {$C_3^0$};
      \node [above] at (4,2.1) {$W_2^3$};
      \node [above] at (6.5,2.1) {$U_3^5$};
      \node [above] at (8.5,2.1) {$W_1^8$};
      \node [above] at (9.5,2.1) {$T_1^9$};
      
      \draw[->] (-0.2,-0.25) -- (10.2,-0.25);
      \foreach \x in {0,1,2,3,4,5,6,7,8,9,10}
      \draw (\x,-0.3) -- (\x,-0.2) node [below] {\scriptsize\x};
    \end{tikzpicture}
    \caption{A sample block decomposition}
    \label{fig:blocks-examp}
  \end{figure}
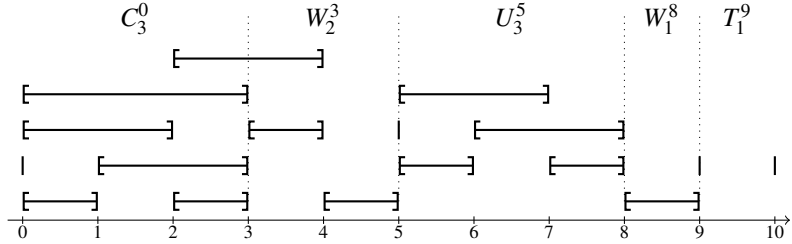
\end{examp}

We are now ready to state and prove our result about the structure of hereditary semiorders.

\begin{thm}\label{thm:block-structure}
  If $P$ is a hereditary semiorder, then intervals in the minimal endpoint representation of $P$ can be uniquely described using the blocks of Definition~\ref{defn:blocks} combined with the boundaries of Definition~\ref{defn:boundaries}. 
\end{thm}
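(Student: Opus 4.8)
The plan is to prove this by induction on the number of points $n$ of the hereditary semiorder $P$, tracking the minimal endpoint representation as we process the corresponding ascent sequence $(x_1,\dots,x_n)$ one term at a time via the moves \moveone, \movetwo, \movethree{} of Bousquet-M\'elou et al. The induction hypothesis is that the representation of $Q = \Psi\inv((x_1,\dots,x_{n-1}))$ decomposes uniquely into blocks joined by boundaries; since $Q$ is an initial segment of a hereditary semiorder, $Q$ is itself a hereditary semiorder, so the hypothesis applies. I would then show that applying the move dictated by $x_n$ transforms a valid block decomposition of $Q$ into a valid block decomposition of $P$, and conversely that every valid block decomposition arises this way (giving uniqueness).

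The first step is to understand what structural constraints being a hereditary semiorder places on which moves can be applied. By \lemref{lem:min-repn-semi}, at every stage the representation must avoid an interval nested in the interior of another; this is exactly what rules out most \movethree{} applications, since \movethree{} generically stretches and shifts intervals in a way that creates nesting unless the ``active region'' $[\lQ[*]{Q}, \lQ{Q}]$ has a very constrained shape. So the core of the argument is: enumerate the possible shapes of the rightmost block (the one containing $[\lQ[*]{Q}, \lQ{Q}]$, equivalently the maximal elements of $Q$) that are compatible with remaining a semiorder under the moves available, and check that each move either (a) grows the rightmost block to a larger block of the same family (e.g. $W_k^b \to W_{k+1}^b$, or $U_k^b \to U_{k+1}^b$, or $U_k^b \to C_k^b$), (b) closes off the current rightmost block and opens a new block to its right joined by a strong, weak, or weak-with-optional boundary, or (c) creates duplicated holdings (the $x_n = x_{n-1}$ case of \moveone) without changing intervals. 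The blocks $T_0, T_1^b, W_k^b, C_2^b, U_k^b, C_k^b$ should be precisely the ``closed'' shapes reachable, and the three boundary types should be precisely the ways \movethree{} or \moveone{} can bridge into fresh territory while keeping minimality and the no-interior-nesting condition.

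The key technical sublemma I would isolate and prove first is a description of the rightmost portion of the representation of any hereditary semiorder $Q$ in terms of $\lQ{Q}$ and $\lQ[*]{Q}$: specifically, which intervals have right endpoint $\lQ{Q}$, and which intervals straddle or lie inside $[\lQ[*]{Q}, \lQ{Q}]$. One expects that in a $\Wb$ block the interior of the active region is filled with length-one ``staircase'' intervals, while in a $\Cb$ or $\Ub$ block it is filled with nested-at-the-corners intervals $[b, b+i]$ and $[b+k-i, b+k]$ that do not violate \lemref{lem:min-repn-semi} precisely because none is in the \emph{interior} of another (they share endpoints). With that sublemma, the case analysis on $x_n \in \{0,\dots,1+\asc(x_1,\dots,x_{n-1})\}$ becomes a finite check: \movetwo{} always appends a $T_1^b$ with a strong boundary (or starts fresh after $T_0$); \moveone{} with $x_n < x_{n-1}$ is impossible for a semiorder unless it extends the active block appropriately; and \movethree{} is where all the subtlety lives.

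The main obstacle will be the \movethree{} analysis, and within it, establishing that the \emph{only} places a weak-with-optional boundary can occur are those listed in \defref{defn:boundaries} — i.e. that the optional interval $[k_1+b-1, k_1+b+1]$ can be present only when the left block is a $\Cb$ or $\Ub$ and the right block is a $\Wb$ or $C_2^{k_1+b}$. This amounts to showing that in all other configurations, either the putative optional interval would sit in the interior of some existing interval (contradicting \lemref{lem:min-repn-semi}, so $P$ is not a semiorder, let alone hereditary), or its presence would be forced/forbidden in a way incompatible with being obtained by a single move from a hereditarily-semiordered $Q$. I would handle this by carefully tracking, through the \movethree{} picture in \figref{fig:movethree}, exactly which old intervals get truncated to end at $i$ and which get stretched, and observing that the left endpoint $\lQ[*]{Q}$ and the structure just left of $i$ determine whether $[i-1, i+1]$ survives as a genuine interval of the representation or coincides with something already forced. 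Uniqueness of the decomposition then follows because the block boundaries are detectable from the representation itself (a strong boundary is witnessed by a shared trivial interval $[b+k_1, b+k_1]$ that is maximal-in-its-stack, a weak boundary by its absence, etc.), so no two distinct block-and-boundary words can yield the same interval set.
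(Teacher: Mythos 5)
Your plan follows essentially the same route as the paper's proof: induction on the length of the ascent sequence, a case analysis on the final block and boundary of $Q$ together with the value of the new term $\alpha$, using Lemma~\ref{lem:min-repn-semi} to rule out nested intervals, and uniqueness by recovering the boundary locations and types from the interval set itself. The paper's case analysis supplies the details you defer (for instance, the optional interval first arises from a \moveone{} in the $\wbd W_1^b$ case rather than from \movethree, a $\sbd W_a^b$ can be re-parsed as $\sbd W_{a-2}^b\wbd C_2^{b+a-2}$, and detecting optional boundaries in the uniqueness step needs the paper's relabeling of points that are endpoints of three intervals and interior to another), but the overall structure is the same.
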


\begin{proof}
  Our proof is by induction on $n$, the number of points in $P$. Since
  points with duplicated holdings do not impact the intervals in the
  minimal endpoint representation, we will assume without loss of
  generality that $P$ has no duplicated holdings. For $n=1$, the only
  option is a single point, which has minimal endpoint representation
  of the interval $[0,0]$. This is $T_0$. Now suppose for some
  positive integer $n$ that if $Q$ is a hereditary semiorder on $n$
  points, then $Q$ can be  described in terms of blocks and
  boundaries. Let $P$ be a hereditary semiorder on $n+1$ points, and
  let $(x_1,\dots,x_n,x_{n+1})$ be the corresponding ascent sequence
  under the bijection $\Psi$ of Bousquet-M\'elou et al. Since $P$ is hereditary, we know that $\Psi\inv((x_1,\dots,x_n))$ is a semiorder $Q$ on $n$ points. Therefore, by the induction hypothesis, the intervals in the minimal endpoint representation of $Q$ can be described in terms of blocks and boundaries. The proof is by cases based first on the last boundary and block in the block structure of $Q$ and second on the value of $\alpha = x_{n+1}$.

  When the last block is $T_1^b$ (and hence $\lQ{Q} = \lQ[*]{Q} =
  b+1$), the last boundary must be strong by definition. If $\alpha
  \leq b-1$, then  $[b,b]$ lies in the interior of the new interval
  added to form $P$, and so $P$ is not a semiorder by
  Lemma~\ref{lem:min-repn-semi}. When $\alpha = b$, we add the
  interval $[b,b+1]$, and thus the last boundary and block changes
  from $\sbd T_1^b$ to $\sbd W_1^b$. The case $\alpha = b+1$ results
  in duplicated holdings. Finally, when $\alpha = b+2$, a \movetwo{}
  is used and the block structure of $P$ ends in $\sbd T_1^b\sbd T_1^{b+1}$.

Before getting into the details of the other blocks and boundaries,
note that when the final block has subscript $a$ and superscript $b$,
taking $\alpha = a+b+1$ always results in a \movetwo{} that adds the trivial interval $[b+a+1,b+a+1]$. This adds $\sbd T_1^{b+a}$ to the end of the block structure of $Q$ to form the block structure of $P$. Therefore, we will not consider this situation below.

  We now consider when the block structure of $Q$ ends $\sbd C_a^b$,
  which implies $\lQ[*]{Q} = b$. When $\alpha = b$, the result is duplicated holdings. When $\alpha < b$, \moveone{} is used, adding the interval $[\alpha, a+b]$. This places the interval $[b,b]$, which exists because of the strong boundary, in the interior of the new interval, and so $P$ is not a semiorder. When $\alpha$ satisfies $b+1\leq \alpha \leq a+b$, a \movethree{} is applied to construct $P$. For $\alpha = b+1$, this converts the $\sbd C_a^b$ at the end of the block structure of $Q$ into $\sbd U_{a+1}^b$ as depicted in Figure~\ref{fig:strong-C-to-U}. When $\alpha$ satisfies $b+2\leq \alpha< b+a$, the \movethree{} results in an interval order that is not a semiorder. This is because \movethree{} extends the interval $[b,\alpha]$ in $Q$ to the interval $[b,\alpha+1]$ in $P$ and truncates the interval $[b+1,a+b]$ in $Q$ to become the interval $[b+1,\alpha]$ in $P$. This results in one interval contained in the interior of another, violating Lemma~\ref{lem:min-repn-semi}. When $\alpha = b+a$, no intervals are truncated but the interval $[b+a,b+a]$ becomes $[b+a+1,b+a+1]$ and the interval $[b+a,b+a+1]$ is added. This results in the block structure of $P$ ending $C_a^b\wbd W_1^{b+a}$.

  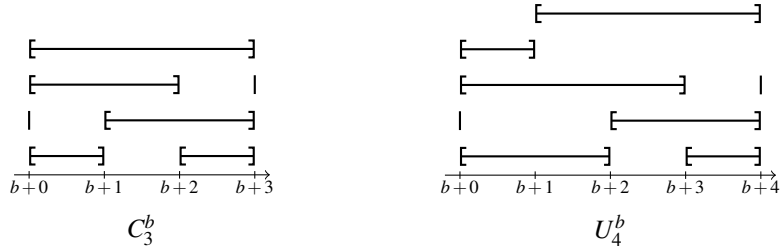
\begin{figure}[h]
    \centering
    \begin{tikzpicture}
      \draw[thick] (0,0.35) -- (0,0.6);
      \draw[thick] (3,0.825) -- (3,1.075);
       \draw[{[-]}, thick] (1,0.475) -- (3,0.475);
      
       \draw[{[-]}, thick] (0,0) -- (1,0);
       \draw[{[-]}, thick] (2,0) -- (3,0);

       \draw[{[-]}, thick] (0,0.95) -- (2,0.95);

       \draw[{[-]}, thick] (0,1.425) -- (3,1.425);
      
      \draw[->] (-0.2,-0.25) -- (3.2,-0.25);
      \foreach \x in {0,1,2,3}
      \draw (\x,-0.3) -- (\x,-0.2) node [below] {\scriptsize $b+\x$};
      \node [below] at (1.5,-.65) {$C_3^b$};
    \end{tikzpicture}\hspace*{0.15\linewidth}
    \begin{tikzpicture}
      \draw[thick] (0,0.35) -- (0,0.6);
      \draw[thick] (4,0.825) -- (4,1.075);
       \draw[{[-]}, thick] (2,0.475) -- (4,0.475);
      
       \draw[{[-]}, thick] (0,0) -- (2,0);
       \draw[{[-]}, thick] (3,0) -- (4,0);

       \draw[{[-]}, thick] (0,0.95) -- (3,0.95);

       \draw[{[-]}, thick] (0,1.425) -- (1,1.425);
       \draw[{[-]}, thick] (1,1.9) -- (4,1.9);
       \draw[->] (-0.2,-0.25) -- (4.2,-0.25);
      \foreach \x in {0,1,2,3,4}
      \draw (\x,-0.3) -- (\x,-0.2) node [below] {\scriptsize $b+\x$};
      \node [below] at (2,-.65) {$U_4^b$};
    \end{tikzpicture} 
    \caption{$\alpha = b+1$ when $Q$ ends $\sbd C_3^b$}
    \label{fig:strong-C-to-U}
  \end{figure}

  The next case is that the block structure of $Q$ ends $\sbd U_a^b$,
  which means $\lQ[*]{Q} = b+1$. When $\alpha < b$, we use a \moveone,
  which adds the interval $[\alpha,b+a]$. This interval contains
  $[b,b+1]$ in its interior, and so $P$ would not be a semiorder. For
  $\alpha = b$, this adds the interval required to convert the $U_a^b$
  to a $C_a^b$ while retaining the strong boundary. For $\alpha = b+1$, we produce duplicated holdings. The remaining cases involve \movethree. The case $\alpha = b+a$ is as with $\sbd C_a^b$, resulting in the block structure of $P$ ending in $U_a^b\wbd W_1^{b+a}$. Because of the definition of $U_b^a$, we know that $a\geq 3$, and thus we must consider $\alpha$ satisfying $b+2\leq \alpha\leq a+b-1$. Here we again use Lemma~\ref{lem:min-repn-semi} by noting that the interval $[b+1,\alpha]$ is contained in the interior of $[b,\alpha+1]$.

  The final case involving a strong boundary before the last block of
  $Q$ is when $Q$'s block structure ends with $\sbd W_a^b$. In this
  case, $\lQ[*]{Q} = b+a-1$. When $\alpha = b+a$, the $\Wb$ at the end
  grows to become $\sbd W_{a+1}^b$. If $\alpha = b+a-1$, we create
  duplicated holdings. If $a\geq 3$, having $\alpha \leq b+a-3$ is a
  \moveone\ which
  results in the interval $[b+a-2,b+a-1]$ being contained in the
  interior of $[\alpha,b+a]$, which takes us out of the class of
  semiorders. For $a\in\set{1,2}$, $\alpha < b$ places the interval
  $[b,b]$ in the interior of $[\alpha,b+a]$, so $P$ would not be a
  semiorder. Thus, it remains only to consider $a\geq 2$ and
  $\alpha = b+a-2$. Here, we have a \moveone{} that adds the interval
  $[b+a-2,b+a]$. When $a=2$, this converts the $\sbd W_2^b$ at the end
  of the block structure of $Q$ into $\sbd C_2^b$ at the end of the
  block structure of $P$. For $a > 2$, the block structure of $P$ ends
  $\sbd W_{a-2}^b\wbd C_2^{b+a-2}$. This is illustrated in
  Figure~\ref{fig:convert-W-W-C}.

  \begin{figure}[h]
    \centering
        \begin{tikzpicture}
      \draw[thick] (0,0.35) -- (0,0.6);
      \draw[thick] (4,-0.125) -- (4,0.125);
      
       \draw[{[-]}, thick] (0,0) -- (1,0);
       \draw[{[-]}, thick] (2,0) -- (3,0);
       \draw[{[-]}, thick] (1,0.475) -- (2,0.475);
       \draw[{[-]}, thick] (3,0.475) -- (4,0.475);

      \draw[->] (-0.2,-0.25) -- (4.2,-0.25);
      \foreach \x in {0,1,2,3,4}
      \draw (\x,-0.3) -- (\x,-0.2) node [below] {\scriptsize $b+\x$};
      \node [below] at (2,-.65) {$\sbd W_4^b$};
    \end{tikzpicture}\hspace*{0.15\linewidth}
    \begin{tikzpicture}
      \draw[thick] (0,0.35) -- (0,0.6);
      \draw[thick] (4,-0.125) -- (4,0.125);
      
       \draw[{[-]}, thick] (0,0) -- (1,0);
       \draw[{[-]}, thick] (2,0) -- (3,0);
       \draw[{[-]}, thick] (1,0.475) -- (2,0.475);
       \draw[{[-]}, thick] (3,0.475) -- (4,0.475);
       \draw[{[-]}, thick] (2,0.95) -- (4,0.95);

      \draw[dotted] (2,-1) -- (2,1.4);
      \draw[->] (-0.2,-0.25) -- (4.2,-0.25);
      \foreach \x in {0,1,2,3,4}
      \draw (\x,-0.3) -- (\x,-0.2) node [below] {\scriptsize $b+\x$};
      \node [below] at (1,-.65) {$\sbd W_2^b$};
      \node [below] at (3,-.65) {$\wbd C_2^{b+2}$};
    \end{tikzpicture} 
    \caption{Converting $\protect\sbd W_4^b$ to
      $\protect\sbd W_2^b\protect\wbd C_2^{b+2}$}
    \label{fig:convert-W-W-C}
  \end{figure}
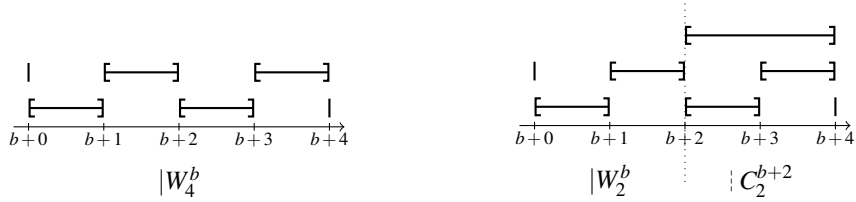
  
  To begin consideration of where the last boundary is weak, we assume
  that $Q$'s block structure ends $\wbd C_a^b$. Here $\lQ[*]{Q} = b$. The \moveone{} cases are $\alpha \leq b$. When $\alpha = b$, we have duplicated holdings. If $\alpha < b$, then since $a\geq 2$ the interval $[b,b+a-1]$ exists and is contained in the interior of $[\alpha,b+a]$; therefore, $P$ is not a semiorder. For $\alpha = b+1$, the situation is just as with a strong boundary, and the last block of $P$'s block structure is $\wbd U_{a+1}^b$. When $\alpha$ satisfies $b+2\leq \alpha\leq a+b$, the two possibilities are just as with $\sbd C_a^b$.

  When $Q$'s block structure ends $\wbd U_a^b$, the argument is
  identical to the $\sbd U_a^b$ case. Thus, we proceed to assume that
  the last boundary and block of $Q$ is $\wbd W_a^b$, which means that
  $\lQ[*]{Q} = b+a-1$. For $a\geq 2$ or $\alpha\geq b$, the situation is just as when the final boundary is strong. Thus, we must only consider when $a=1$ and $\alpha < b$. When $\alpha \leq b-2$, we note that the existence of a weak boundary means that the block preceding our $W_1^b$ must be a $\Cb$ or a $\Ub$, and thus the minimal endpoint representation of $Q$ contains the interval $[b-1,b]$. This interval is contained in the interior of the new interval $[\alpha,b+1]$, and $P$ is not a semiorder. The final case is $\alpha = b-1$, which leads us to the necessity of the optional interval, since the new interval is $[b-1,b+1]$. Thus, the final block and boundary of $P$'s block structure is $\wbdo W_1^b$.

  The previous case has forced us now to consider the situation where
  the final boundary and block of $Q$ is $\wbdo W_1^b$, in which case
  $\lQ[*]{Q} = b-1$. Here $a=1$, so $\alpha = b+a+1= b+2$ is the same as all other cases. For $\alpha = b+1$, we extend the last $\Wb$ to become $\wbdo W_2^b$. The case $\alpha\leq b-2$ creates a nonsemiorder by Lemma~\ref{lem:min-repn-semi} as in the case of a weak boundary. For $\alpha = b-1$, we have duplicated holdings. It remains only to consider the case $\alpha = b$. Note that the weak boundary preceding the last block of $Q$ must be preceded by a $\Cb$ or a $\Ub$. In either case, the interval $[b-2,b]$ must be present because of the minimum size of such blocks. In $P$, this interval becomes $[b-2,b+1]$, and the optional interval is truncated to $[b-1,b]$. We now have one interval in the interior of another, and so $P$ is not a semiorder by Lemma~\ref{lem:min-repn-semi}.

  We now must consider the case where $Q$'s final boundary
  and block are $\wbdo W_2^b$, which gives $\lQ[*]{Q} = b+1$. Here we have that $\alpha = b + a + 1$ when $\alpha = b+3$. For $\alpha = b+2$, we extend the final $\Wb$ to $\wbdo W_3^b$. When $\alpha = b+1$, we have duplicated holdings. If $\alpha =b$, the $\Wb$ at the end of $Q$'s block structure becomes $\wbdo C_2^b$ in $P$. For $\alpha\leq b-1$, the new interval is $[\alpha,b+2]$, which contains in its interior the interval $[b,b+1]$. Therefore, $P$ is not a semiorder.

  When $a\geq 3$ and $Q$'s block structure ends $\wbdo W_a^b$, the
  argument proceeds as it did with $\wbd W_a^b$. Thus, the only case
  we must still address is when the final block and boundary of $Q$'s
  block structure is $\wbdo C_2^b$. Here $\lQ[*]{Q} = b$. The case
  $\alpha = b+3$ is taken care of because $\alpha = b+a+1$ here. When $\alpha = b+2$, this
  is the same as the $\alpha=b+a$ case for $\wbd W_a^b$,
  and we have that the block structure for $P$ ends with $\wbdo C_2^b\wbd
  W_1^{b+2}$. This is illustrated in Figure~\ref{fig:wbdo-C}. For $\alpha = b+1$, note that the optional interval
  $[b-1,b+1]$ extends to become $[b-1,b+2]$, which contains in its
  interior the interval $[b,b+1]$ that results from the \movethree{}
  truncating $[b,b+2]$. Thus, $P$ is not a semiorder by
  Lemma~\ref{lem:min-repn-semi}. When $\alpha =b$, we have duplicated
  holdings. When $\alpha \leq b-1$, the interval $[\alpha,b+2]$
  contains the interval $[b,b+1]$ in its interior, violating
  Lemma~\ref{lem:min-repn-semi}. Since this case did not require us to
  permit a weak boundary with optional element before any other types
  of blocks, our proof of the existence of the block structure is complete.

  \begin{figure}[h]
    \centering
        \begin{tikzpicture}
          %\draw[thick] (0,0.35) -- (0,0.6);
      \draw[thick] (2,-0.125) -- (2,0.125);
      \draw[{[-]}, thick] (-1,0.475) -- (0,0.475);
       \draw[{[-]}, thick] (0,0) -- (1,0);
       \draw[{[-]}, thick] (1,0.475) -- (2,0.475);
       \draw[{[-]}, thick] (0,0.95) -- (2,0.95);

       \draw[{[-]}, thick] (-1,1.425) -- (1,1.425);
       \node [above] at (0,1.425) {$*$};

       \draw[dotted] (0,-1) -- (0,2.1);
      \draw[->] (-1.2,-0.25) -- (2.2,-0.25);
      \foreach \x/\xlab in {-1/$b-1$,0/$b$,1/$b+1$,2/$b+2$}
      \draw (\x,-0.3) -- (\x,-0.2) node [below] {\scriptsize \xlab};
      \node [below] at (1,-.65) {$\wbdo C_2^b$};
    \end{tikzpicture}\hspace*{0.15\linewidth}
    \begin{tikzpicture}
      \draw[thick] (3,0.35) -- (3,0.6);
      \draw[{[-]}, thick] (-1,0.475) -- (0,0.475);
       \draw[{[-]}, thick] (0,0) -- (1,0);
       \draw[{[-]}, thick] (1,0.475) -- (2,0.475);
       \draw[{[-]}, thick] (0,0.95) -- (2,0.95);

       \draw[{[-]}, thick] (2,0) -- (3,0);
       \draw[{[-]}, thick] (-1,1.425) -- (1,1.425);
       \node [above] at (0,1.425) {$*$};

       \draw[dotted] (0,-1) -- (0,2.1);

       \draw[dotted] (2,-1) -- (2,2.1);
      \draw[->] (-1.2,-0.25) -- (3.2,-0.25);
      \foreach \x/\xlab in {-1/$b-1$,0/$b$,1/$b+1$,2/$b+2$,3/$b+3$}
      \draw (\x,-0.3) -- (\x,-0.2) node [below] {\scriptsize \xlab};
      \node [below] at (1,-.65) {$\wbdo C_2^b$};
      \node [below] at (2.5,-.65) {$\wbd W_1^{b+2}$};
    \end{tikzpicture} 
    
    \caption{Moving on from $\protect\wbdo C_2^b$}
    \label{fig:wbdo-C}
  \end{figure}

  It remains to show that the block structure of a hereditary
  semiorder $P$ is unique. To do so, we will identify the location and
  type of each boundary between blocks. Once this is done, the blocks
  between the boundaries are uniquely defined. To identify the
  boundaries, we begin by labeling all of the integers between $0$ and
  $\lQ{P}$ as follows:
  \[
    t(i) = \begin{cases}
      s & \text{if }[i,i]\text{ is in the representation}\\
      w & \text{if $i$ is the endpoint of at least 3 intervals and not
        in the interior of an interval}\\
      z &\text{if $i$ is the endpoint of at least 3 intervals and
        in the interior of an interval}\\
      x & \text{otherwise}.
    \end{cases}
  \]
  We have that $t(i) = s$ if and only if $i$ is the location of a
  strong boundary, since the block and boundary definitions only allow
  intervals of length $0$ at strong boundaries (including the implicit
  strong boundaries at the ends). Next, note that $t(i) =w$ if and
  only if $i$ is the location of a weak boundary. (This holds because
  we do not allow weak boundaries between $\Wb$.) We observe that the
  definition of $t$ tells us that if $t(i) = x$, then $i$ is not a
  boundary. To finish our argument, we will redefine $t(i)$ for those
  integers $i$ with $t(i) = z$. We wish to have $t(i) = o$ if and only
  if $i$ is the location of a weak boundary with optional interval and
  will define the other integers $j$ for which $t(j) = z$ to have
  $t(j) = x$. Let $i$ be the smallest integer in a maximal sequence of
  consecutive integers with label $z$. Since a weak boundary with
  optional interval must be preceded by a $\Ub$ or a $\Cb$, we know
  that $i$ cannot be the location of a weak boundary with optional
  interval. This is because the left endpoint of an optional interval
  must be the left endpoint of two intervals of the preceding $\Ub$ or
  $\Cb$ as well as being in the interior of at least one interval of
  the preceding block. Thus, we let $t(i) = x$, which then tells us
  that there is a weak boundary with optional element at $i+1$, so we
  let $t(i+1) = o$. If $t(i+2) = z$, then we must change $t(i+2)$ to
  $x$, since we cannot have two weak boundaries with optional
  intervals at consecutive integers. This process continues until no
  integers in $[0,\lQ{P}]$ have label $z$, which means the boundaries
  have all been uniquely determined because all decisions are forced.
\end{proof}

A careful reading of the preceding proof will show why the definition
of a weak boundary with optional element is so restrictive. In
particular, the optional interval is only introduced when absolutely
necessary, and then the argument proceeds to consider what can develop
following an optional interval. The fact that an optional interval can
only be preceded by a $\Cb$ or a $\Ub$ comes from the fact that our
first optional interval arises in the $\wbd W_1^b$ case, which
requires a $\Cb$ or $\Ub$ before it because of the prohibition against
weak boundaries between $\Wb$s. The only other weak boundaries with
optional elements arise as a consequence of building up from the
$\wbdo W_1^b$ case, and thus cannot be preceded by a $\Wb$ either.

\section{Block Characterization of Dimension 2 Semiorders}

We are now prepared to use the blocks and boundaries introduced above
to provide a characterization of the semiorders of dimension $2$,
which we will eventually use to enumerate them. We begin with a
straightforward lemma that links the moves used to construct an
interval order from an ascent sequence to subposet structure. This
will be useful in connecting to Rabinovitch's forbidden subposet
characterization of the dimension $2$ semiorders.

\begin{lem}\label{lem:move23-subposet}
  Let $P$ be a poset. If $Q$ is a subposet of $P$ and $P'$ is a poset
  obtained from $P$ by \moveone{} or \movetwo, then $Q$ is a subposet
  of $P'$.
\end{lem}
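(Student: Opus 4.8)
The plan is to observe that neither \moveone{} nor \movetwo{} alters any interval already present in the minimal endpoint representation: each of these two moves simply adjoins one new interval. Consequently the comparabilities and incomparabilities among the points of $P$ are identical in $P$ and in $P'$, so that $P$ is an induced subposet of $P'$. Since $Q$ is a subposet of $P$ and $P$ is a subposet of $P'$, transitivity of the subposet relation gives that $Q$ is a subposet of $P'$.

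In more detail, write $P$ for the interval order under consideration and recall that $P'$ arises by appending one term $\alpha$ to the ascent sequence of $P$, with $\alpha$ in the range triggering \moveone{} (so $\alpha \le \lQ[*]{P}$) or \movetwo{} (so $\alpha = \lQ{P}+1$). In the first case the single new interval is $[\alpha,\lQ{P}]$, and in the second it is $[\lQ{P}+1,\lQ{P}+1]$; in both cases every other interval in the minimal endpoint representation of $P'$ is exactly an interval of $P$. Let $z$ denote the point of $P'$ carrying this new interval. For any two points $x,y$ of $P$, whether $x <_{P'} y$ holds is decided purely by the (unchanged) intervals $I(x)$ and $I(y)$, so $x <_{P'} y$ if and only if $x <_P y$; thus the restriction of $\le_{P'}$ to the point set of $P$ equals $\le_P$. (In passing, $z$ is maximal in $P'$, since its right endpoint is at least $\lQ{P}$ while every interval of $P$ has right endpoint at most $\lQ{P}$, but this fact is not needed.) Hence $P$ sits inside $P'$ as an induced subposet, and therefore so does every subposet of $P$, in particular $Q$.

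The argument has no genuinely hard step: the entire content is the bookkeeping observation that \moveone{} and \movetwo{} only add an interval and never modify one — which is exactly what distinguishes them from \movethree{}, the move excluded in the hypothesis. The only point deserving a line of care is the meaning of "subposet": under the convention used throughout the paper (a subposet of $P$ is a subset of its points equipped with the induced order), the displayed identity of restricted orders is precisely what the conclusion requires. The edge case of \moveone{} in which the new point acquires an interval already used by an existing point (duplicated holdings) does not affect the argument, since the order on the points of $P$ is still left untouched.
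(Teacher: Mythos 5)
Your proposal is correct and follows essentially the same route as the paper: both argue that \moveone{} and \movetwo{} leave all existing intervals, and hence all comparabilities among the points of $P$, unchanged, so $P$ is a subposet of $P'$ and therefore $Q$ is as well. Your version simply spells out the bookkeeping (which interval is added in each case, the duplicated-holdings edge case) in more detail than the paper's two-line proof.
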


\begin{proof}
  Since neither \moveone{} nor \movetwo{} changes any of the
  existing comparabilities in $P$ to form $P'$, $P$ is a subposet of
  $P'$. Thus, $Q$ is a subposet of $P'$ as well.
\end{proof}

A full description of the block structure of semiorders of dimension
$2$ will be accomplished through a few steps. We begin by showing that
all semiorders of dimension $2$ are hereditary.

\begin{thm}\label{thm:dim2-hereditary}
  Let $P$ be a semiorder. If $P$ is not hereditary, then $\dim(P) = 3$.
\end{thm}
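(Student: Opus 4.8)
The plan is to reduce the statement to producing one of the three posets $\mbf{FX}_2$, $\mbf{H}_0$, $\mbf{G}_0$ of \figref{fig:semi-forb} as a subposet of $P$. Rabinovitch's theorem already gives $\dim(P)\le 3$, and by the Kelly / Trotter--Moore characterization of three-dimensional posets a semiorder has dimension at most $2$ exactly when it contains none of these three; so exhibiting a copy of one of them in $P$ forces $\dim(P)=3$. Write $(x_1,\dots,x_n)$ for the ascent sequence of $P$, put $Q_i=\Psi\inv((x_1,\dots,x_i))$, and let $m$ be least with $Q_m$ not a semiorder (this exists since $P$ is not hereditary). By minimality every prefix of $(x_1,\dots,x_{m-1})$ corresponds to a semiorder, so $Q_{m-1}$ is a \emph{hereditary} semiorder and hence carries a block decomposition by \thmref{thm:block-structure}. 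The move $Q_{m-1}\to Q_m$ must be a \moveone{} or a \movethree, since a \movetwo{} only adjoins a global maximum and so creates neither \twoplustwo{} nor \oneplusthree. Also $Q_m$, being an interval order, is \twoplustwo-free, so by Scott--Suppes it contains a \oneplusthree; equivalently (\lemref{lem:min-repn-semi}) its minimal endpoint representation has a pair of nested intervals.

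The core of the proof is a case analysis on the final block and boundary of $Q_{m-1}$ and on $\alpha=x_m$, organized precisely as the proof of \thmref{thm:block-structure} and restricted to the cases there shown to leave the semiorder class. In each of those cases the proof already exhibits an interval $S$ lying in the interior of the interval $L$ created or lengthened by the move; together with the two intervals forced by minimality --- one whose right endpoint is the left end of $L$, one whose left endpoint is its right end, exactly as in the proof of \lemref{lem:min-repn-semi} --- this gives an explicit \oneplusthree{} in $Q_m$ with $L$ as its isolated point. I would then enlarge this \oneplusthree{} to a copy of $\mbf{FX}_2$, $\mbf{H}_0$, or $\mbf{G}_0$ by adjoining three further intervals whose presence is forced by the block structure of $Q_{m-1}$ around the block involved in the move: the intervals internal to a \Cb{} or \Ub{} block, the interval bridging a preceding weak boundary, and/or an optional interval. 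Checking the resulting seven intervals against the three diagrams of \figref{fig:semi-forb}, case by case, then exhibits the forbidden subposet in $Q_m$ (in a few cases only after one further move of the construction, which is harmless).

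It remains to carry the forbidden subposet forward to $P=Q_n$. Steps $Q_i\to Q_{i+1}$ that are \moveone{} or \movetwo{} leave $Q_i$ a subposet of $Q_{i+1}$ by \lemref{lem:move23-subposet}, so they preserve it. For a step that is a \movethree{} with parameter $i$ I would trace its four effects --- fixing every interval lying entirely left of $i$, extending by one the right endpoint of an interval straddling $i$, shifting right by one an interval lying entirely right of $i$, and truncating each co-maximal interval $[\lambda,\ell(Q_i)]$ to $[\lambda,i]$ --- across the seven intervals making up the forbidden subposet. In all but one situation the subposet survives, possibly after relabelling one of its intervals by its shifted or extended image; the exceptional situation is when the ``long'' interval of the subposet is itself truncated, and there one must instead extract a new forbidden subposet directly from the truncated representation, exploiting that truncation is precisely the operation that can repair a \oneplusthree. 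This last point --- persistence of dimension $3$ across a truncating \movethree{} --- is the main obstacle: it forces one to relocate a forbidden subposet after each such move and to track which parts of $Q_{m-1}$'s block structure have been disturbed by intervening moves. The remaining verifications are numerous but routine.
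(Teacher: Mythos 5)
There is a genuine gap, and it lies exactly where you locate ``the main obstacle.'' You anchor the argument at the \emph{first} failure: the least $m$ with $Q_m=\Psi\inv((x_1,\dots,x_m))$ not a semiorder. Two problems follow. First, your claim that the case analysis exhibits one of $\mbf{FX}_2$, $\mbf{H}_0$, $\mbf{G}_0$ already in $Q_m$ (or ``after one further move'') is not true in general: all three forbidden posets have seven points, while the first non-semiorder prefix can have as few as six --- e.g.\ $(0,1,0,1,2,0)$, the paper's own example, where $Q_5=W_3^0$ and the offending \moveone{} merely nests $[1,2]$ inside $[0,3]$. A forbidden subposet materializes only when some later \movethree{} \emph{repairs} the \oneplusthree, and that repair may occur many moves after $m$, with arbitrary moves in between. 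Second, and more seriously, your forward-propagation step is unsupported at its crux: between the first failure and the last repair the intermediate prefixes need not be semiorders, so nothing constrains a truncating \movethree{} from destroying the tracked seven-interval configuration, and your proposal to ``extract a new forbidden subposet directly from the truncated representation'' is an assertion, not an argument. That persistence claim is the theorem's real content, and as set up it does not follow from the bookkeeping you describe.

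The paper avoids both difficulties with one choice: it takes $k$ to be the \emph{largest} index with $Q_k$ not a semiorder. Then $Q_{k+1},\dots,Q_n=P$ are all semiorders, and the repairing move is forced to be a \movethree{} whose parameter is pinned down by Lemma~\ref{lem:min-repn-semi} (it must satisfy $c<x_{k+1}\le d$ for the nested pair $[c,d]\subset(a,b)$, with auxiliary intervals $[d,b]$ and $[g,c]$, $g\le a$, forced by minimality and by $Q_{k+1}$ being a semiorder); this immediately yields an $\mbf{FX}_2$ in $Q_{k+1}$ without any appeal to Theorem~\ref{thm:block-structure}. The persistence step then uses precisely the constraint your setup lacks: since every later prefix is a semiorder, any \movethree{} that would eliminate the last copy of $\mbf{FX}_2$ is shown to create a nested pair of intervals, contradicting Lemma~\ref{lem:min-repn-semi}. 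To salvage your approach you would either have to prove a replacement lemma for truncating moves in the unconstrained regime (which I do not believe is routine) or simply switch the anchor from the first failure to the last, at which point you are reproducing the paper's proof and the block-structure case analysis becomes unnecessary.
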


\begin{proof}
  Let $P$ be a semiorder on $n$ points and let
  $(x_1,\dots,x_n) = \Psi(P)$ be the ascent sequence corresponding to
  $P$. Without loss of generality, we may assume that $P$ has no
  duplicated holdings. Since $P$ is not hereditary, there is some
  largest positive integer $k < n$ such that $Q =
  \Psi\inv((x_1,\dots,x_k))$ is not a semiorder. Since we know that
  $Q'=\Psi\inv((x_1,\dots,x_{k+1}))$ is a semiorder, $Q'$ does not
  contain $\oneplusthree$. However, $Q$ must contain \oneplusthree,
  since $Q$ is an interval order that is not a semiorder. Therefore,
  by Lemma~\ref{lem:move23-subposet}, $Q'$ is not obtained from $Q$ by
  \moveone{} or \movetwo{}. We consider the minimal endpoint
  representation of $Q$. By Lemma~\ref{lem:min-repn-semi}, this
  representation has two intervals $[a,b]$ and $[c,d]$ with $[c,d]$
  contained in the interior of $[a,b]$. Since the \movethree{} that
  obtains $Q'$ from $Q$ destroys the \oneplusthree, we must have that
  $b=\lQ{Q}$ and $a < x_{k+1}$. If $x_{k+1}\leq c$, then the minimal
  endpoint representation of $Q'$ contains the interval $[c+1,d+1]$
  and the interval $[x_{k+1},b+1]$, which implies that $Q'$ is not a
  semiorder by Lemma~\ref{lem:min-repn-semi}. If $x_{k+1} > d$, then
  the minimal endpoint representation of $Q'$ contains the interval
  $[a,x_{k+1}]$, which contains $[c,d]$ in its interior. This would
  force $Q'$ to not be a semiorder. Thus, we must have that $x_{k+1}$
  is an integer with $c < x_{k+1} \leq d$, forcing $c\neq d$.

  Since $d < b$ and we are considering the minimal endpoint
  representation of $Q$, there exists an interval $[d,f]$ in the
  representation. Moreover, if $f < b$, then the minimal endpoint
  representation of $Q'$ contains the interval $[d+1,f+1]$, and this
  interval is contained in the interior of $[x_{k+1},b+1]$. This would
  prevent $Q'$ from being a semiorder, so $f=b$. Also note that there
  must be an interval $[g,c]$. If $g > a$, then the minimal endpoint
  representation of $Q'$ contains the interval $[g,c]$ and the
  interval $[a,x_{k+1}]$ with $x_{k+1} > c$. This again violates
  Lemma~\ref{lem:min-repn-semi}. The structural information we have
  gleaned so far is depicted in Figure~\ref{fig:dim2-step1}. Using
  what we know about $x_{k+1}$, we can draw
  Figure~\ref{fig:dim2-step2} to reflect intervals that must exist in
  $Q'$. It is straightforward to verify that these intervals give us
  the three-dimensional semiorder $\mbf{FX}_2$ from Figure~\ref{fig:semi-forb}.

  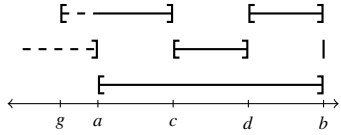
\begin{figure}[h]
    \centering
        \begin{tikzpicture}
      \draw[thick] (4,0.35) -- (4,0.6); % [b,b]
       \draw[{[-]}, thick] (2,0.475) -- (3,0.475); % [c,d]
      \draw[{-]}, thick, dashed] (0,0.475) -- (1,0.475); % [?,a]

       \draw[{[-]}, thick] (1,0) -- (4,0); %[a,b]

       \draw[{[-]}, thick] (3,0.95) -- (4,0.95); % [d,b]
       \draw[{-]}, thick] (1,0.95) -- (2,0.95); % [g,c]
       \draw[{[-}, thick, dashed] (0.5,0.95) -- (1,0.95); % [g,c]

       \draw[<->] (-0.2,-0.25) -- (4.2,-0.25);
      \foreach \x/\xlab in {0.5/$g$,1/$a$,2/$c$,3/$d$,4/$b$} {
      \draw (\x,-0.3) -- (\x,-0.2);
      \node at (\x,-0.5) { \scriptsize\xlab};};
    \end{tikzpicture} 
    \caption{Intervals that must exist before eliminating
      \oneplusthree}
    \label{fig:dim2-step1}
  \end{figure}

  \begin{figure}[h]
    \centering
        \begin{tikzpicture}
      \draw[thick] (6,0.35) -- (6,0.6); % [b+1,b+1]
       \draw[{[-]}, thick] (2,0.475) -- (4,0.475); % [c,d+1]
      \draw[{-]}, thick, dashed] (0,0.475) -- (1,0.475); % [?,a]

       \draw[{[-]}, thick] (1,0) -- (3,0); %[a,x_{k+1}]
       \draw[{[-]}, thick] (4,0) -- (6,0); % [d+1,b+1]

       \draw[{[-]},thick] (3,0.95) -- (6,0.95); % [x_{k+1},b+1]
       \draw[{-]}, thick] (1,0.95) -- (2,0.95); % [g,c]
       \draw[{[-}, thick, dashed] (0.5,0.95) -- (1,0.95); % [g,c]

       \draw[<->] (-0.2,-0.25) -- (6.2,-0.25);
      \foreach \x/\xlab in
      {0.5/$g$,1/$a$,2/$c$,4/$d+1$,3/$x_{k+1}$,5/$b$, 6/$b+1$} {
      \draw (\x,-0.3) -- (\x,-0.2);
      \node at (\x,-0.5) { \scriptsize\xlab};};
      %\node [below] at (2,-.65) {$U_b^4$};
    \end{tikzpicture} 
    \caption{Intervals that must exist after eliminating
      \oneplusthree}
    \label{fig:dim2-step2}
  \end{figure}
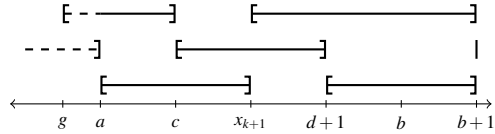

  By assumption, once we have obtained $Q'$ from
  $(x_1,\dots,x_{k+1})$, each of the posets obtained from $(x_1,\dots,
  x_m)$ with $m\geq k+1$ is a semiorder. We will show that it is
  impossible to eliminate all copies of $\mbf{FX}_2$ subject to this
  constraint, and thus we must have $\dim(P)=3$. To do so, assume that
  $m$ is such that $R = \Psi\inv((x_1,\dots,x_m))$ contains $\mbf{FX}_2$ and that for
  all $m > m'$, $\Psi\inv((x_1,\dots,x_{m'}))$ does not contain
  $\mbf{FX}_2$. We cannot be as precise about the endpoints as we were
  above at the first occurrence of $\mbf{FX}_2$, but we do have the
  configuration shown in Figure~\ref{fig:dim2-step3}. Note that we
  do not necessarily have that the endpoints shown as equal (such as
  $\iright{a_1}$ and $\ileft{b_2}$) are equal. Instead, we merely
  require that the intervals overlap.
    \begin{figure}[h]
    \centering
    \begin{tikzpicture}
      %\draw[thick] (6,0.35) -- (6,0.6); % [b+1,b+1]
      \node [above] at (5.5,0.475) {\scriptsize$c$};
      \draw[{[-}, thick] (5,0.475) -- (6,0.475); %
      \node [above] at (0.5,0.475) {\scriptsize$a_2$};
      \draw[{[-]}, thick] (2,0.475) -- (4,0.475); % [c,d+1]
      \node [above] at (3,0.475) {\scriptsize$b_2$};
      \draw[{-]}, thick, dashed] (0,0.475) -- (1,0.475); % [?,a]
      
      \node [above] at (2,0) {\scriptsize $a_3$};
      \draw[{-]}, thick] (1,0) -- (3,0); %[a,x_{k+1}]
      \draw[thick, dashed] (0,0) -- (1,0); %[a,x_{k+1}]
      \node [above] at (5,0) {\scriptsize $b_3$};
       \draw[{[-}, thick] (4,0) -- (6,0); % [d+1,b+1]

       \node [above] at (4.5,0.95) {\scriptsize $b_1$}; 
       \draw[{[-},thick] (3,0.95) -- (6,0.95); % [x_{k+1},b+1]
       \node [above] at (1.2,0.95) {\scriptsize $a_1$};
       \draw[{-]}, thick] (1,0.95) -- (2,0.95); % [g,c]
       \draw[{-}, thick, dashed] (0.5,0.95) -- (1,0.95); % [g,c]

       \draw[<->] (-0.2,-0.25) -- (6.2,-0.25);
      \foreach \x/\xlab in
      {1/$a$,2/$c$,4/$d+1$,3/$x_{k+1}$,5/$b$, 6/$b+1$} {
      \draw (\x,-0.3) -- (\x,-0.2);
      % \node at (\x,-0.5) { \scriptsize\xlab};
    };
      %\node [below] at (2,-.65) {$U_b^4$};
    \end{tikzpicture} 
    \caption{Intervals forming an $\mbf{FX}_2$ in $R$}
    \label{fig:dim2-step3}
  \end{figure}
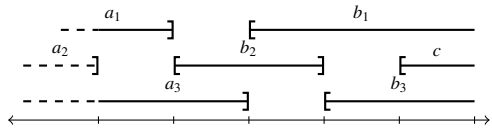

  By Lemma~\ref{lem:move23-subposet}, we only need to consider the effect of
  \movethree. If $x_{m+1}\leq \ileft{b_1}$, then the only impact of the
  \movethree{} on these seven intervals is stretching or shifting that
  does not impact their relationship to one another, and thus the
  $\mbf{FX}_2$ is not removed. Note that if none of $b_1,c,b_3$ is
  maximal in $R$, then this copy of $\mbf{FX}_2$ cannot be removed. If
  $b_1$ is maximal in $R$, then a \movethree{} with
  $\ileft{b_1}<x_{m+1}\leq \ileft{b_3}$ truncates $b_1$ and moves the
  right endpoint of $b_2$'s interval one unit right. This places $b_1$
  in the interior of $b_2$, violating the requirement that we must
  obtain a semiorder. (If $b_1$ is not maximal in $R$, a \movethree{}
  with $x_{m+1}$ in this range does not remove the $\mbf{FX}_2$.) A
  \movethree{} with $x_{m+1} > \iright{b_2}$ leaves a $\mbf{FX}_2$,
  either consisting of the same points (possibly with truncated
  intervals) or with the new interval playing the role of $c$ (and
  possibly with the intervals for $b_1$ and $b_3$ being truncated). A
  \movethree{} with $x_{m+1}$ satisfying $\ileft{b_3} <x_{m+1}\leq
  \iright{b_2}$ must truncate at least one of $b_1$ and $b_3$ if the
  $\mbf{FX}_2$ is to be eliminated. However, then the truncated
  interval lies in the interior of the stretched interval for $b_2$,
  and the resulting poset would not be a semiorder.

  Having shown that we cannot eliminate the last occurence of an
  $\mbf{FX}_2$ after the last occurrence of a \oneplusthree, we can
  therefore conclude that if $P$ is not hereditary, then $\dim(P)=3$
  as claimed.
\end{proof}

We now know that our search for semiorders of dimension at most $2$
can be restricted to the hereditary semiorders. Thus, we will proceed
to consider the three forbidden subposets of
Figure~\ref{fig:semi-forb} and what restrictions we must place upon
the block structure of a hereditary semiorder in order to exclude them.

\begin{lem}\label{lem:FX2}
  Let $P$ be a hereditary semiorder. If $P$ contains $\mbf{FX}_2$, then
  the block structure of $P$ requires an optional interval.
\end{lem}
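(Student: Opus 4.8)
The plan is to analyze the block structure of $P$ directly and show that a copy of $\mbf{FX}_2$ cannot be embedded into any configuration of blocks and boundaries that lacks an optional interval. First I would recall the shape of $\mbf{FX}_2$ from Figure~\ref{fig:semi-forb}: it consists of three two-element chains $a_i <_P b_i$ together with a point $c$, with comparabilities $a_1 <_P b_2$, $a_1 <_P b_3$, $a_2 <_P b_3$, $a_2 <_P c$, and $c <_P b_3$ (and the corresponding incomparabilities). Translating these into the minimal endpoint representation via the interval containment criterion of Lemma~\ref{lem:min-repn-semi} and the overlap conditions, I would extract the essential feature: $\mbf{FX}_2$ forces the existence of three intervals whose left endpoints form a strictly increasing sequence while their right endpoints are ``interleaved'' in a way that, combined with the NODH assumption and minimality, produces a point whose interval properly straddles a strong boundary-type gap --- precisely the kind of interval that the blocks $\Wb$, $\Ub$, $\Cb$ together with strong and ordinary weak boundaries are forbidden from containing.

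The key steps, in order: (1) Using Lemma~\ref{lem:min-repn-semi} and the overlap structure of $\mbf{FX}_2$, identify the seven intervals $I(a_i), I(b_i), I(c)$ in the minimal endpoint representation of $P$ and pin down the order relations among their endpoints (this mirrors the endpoint bookkeeping in the proof of Theorem~\ref{thm:dim2-hereditary}, Figures~\ref{fig:dim2-step1}--\ref{fig:dim2-step3}). (2) Invoke Theorem~\ref{thm:block-structure} to write the intervals of $P$ in terms of blocks and boundaries, and observe that every interval of length $\geq 2$ lives inside a single $\Wb$, $\Ub$, or $\Cb$ block, or --- crucially --- is an optional interval straddling a weak boundary. (3) Show that the interval $I(b_2)$ (the middle chain-top, which must strictly contain neither $I(b_1)$ nor $I(b_3)$ but overlap both and sit ``above'' $I(a_1)$ and $I(a_2)$) cannot be accommodated within a single block: the defining sets $\Wb$, $\Ub$, $\Cb$ in Definition~\ref{defn:blocks} each have the property that any two of their non-trivial intervals are either nested or share an endpoint, so the ``no nesting but genuine interleaving'' pattern required by $\mbf{FX}_2$ forces the three relevant intervals to span at least two blocks. (4) Conclude that the straddling interval must be an optional interval, since a strong boundary contributes only a length-$0$ interval $[b+k_1,b+k_1]$ and an ordinary weak boundary contributes no straddling interval at all; only $\wbdo$ supplies the interval $[k_1+b-1,k_1+b+1]$ that bridges two blocks, which is exactly what is needed.

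The main obstacle I anticipate is step (3): verifying carefully that within a single block $\Wb_k^b$, $\Ub_k^b$, or $\Cb_k^b$ one genuinely cannot find three intervals realizing the $\mbf{FX}_2$ overlap-and-nesting pattern. For $\Ub_k^b$ and $\Cb_k^b$ this is plausible because all their intervals share the left endpoint $b$ or the right endpoint $b+k$ (so any two are nested), which immediately rules out the required strictly-increasing left endpoints with interleaved right endpoints; for $\Wb_k^b$ the intervals are the unit intervals $[b+i,b+i+1]$ plus two trivial ones, which again are pairwise either disjoint, adjacent, or nested, never genuinely interleaved in the needed way. So the argument reduces to a short case check on the three block types, after which the conclusion that an optional interval is required follows from Definition~\ref{defn:boundaries}. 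A secondary subtlety is ensuring the NODH reduction is harmless here, which it is, since duplicated holdings do not affect the intervals present in the representation.
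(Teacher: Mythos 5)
Your overall direction (force a long interval out of the $\mbf{FX}_2$ copy, then argue that accommodating the surrounding intervals without an optional interval is impossible) is in the spirit of the paper's proof, but as written the argument has concrete gaps. First, you misread $\mbf{FX}_2$: in Figure~\ref{fig:semi-forb} the relations include $a_2<_P b_1$, $b_2<_P c$, $a_1<_P c$, and $a_3<_P c$, while $a_1$ is \emph{incomparable} to $b_2$ and $c$ is incomparable to $b_3$; your claimed $a_1<_P b_2$ and $c<_P b_3$ are false. This is not harmless: the engine of the paper's proof is precisely that $b_2$ is incomparable to all of $a_1,a_3,b_1,b_3$ while $a_1<b_1$ and $a_3<b_3$, which makes $\iright{a_1}$, a common point $x\in I(a_3)\cap I(b_1)\cap I(b_2)$, and $\ileft{b_3}$ three distinct points of $I(b_2)$, so $I(b_2)$ has length at least $2$. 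Second, the central claim of your step (3) is false: in $U_4^0$ the intervals $[0,2]$ and $[1,4]$ overlap, are not nested, and share no endpoint, so it is not true that any two intervals of a $\Ub$ or $\Cb$ are ``nested or share an endpoint.''

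Third, and most seriously, even a corrected version of step (3) --- ``the relevant intervals cannot all lie in a single block'' --- does not imply the lemma. Across an ordinary weak boundary at $m$, an interval $[\,\cdot\,,m]$ of the left block and an interval $[m,\cdot\,]$ of the right block overlap in the point $m$, so the incomparabilities of an $\mbf{FX}_2$ copy could a priori be realized by intervals spread over two blocks with \emph{no} interval straddling a boundary; ``spans two blocks'' does not give you the straddling interval your step (4) needs. The paper closes exactly this gap: since $I(b_2)$ has length $\geq 2$, if it is not itself optional it lies in a $\Cb$ or $\Ub$, where every interval has an endpoint at an end of its block; then $b_2<c$ together with $b_1$ incomparable to $c$ forces $I(b_1)$ to extend strictly to the right of $\iright{b_2}$, and $a_2<b_2$ together with $a_3$ incomparable to $a_2$ forces $I(a_3)$ to extend strictly to the left of $\ileft{b_2}$; since both $I(a_3)$ and $I(b_1)$ also contain the interior point $x$ of $I(b_2)$, one of them contains points strictly on both sides of a block end, and only an optional interval can do that. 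Your proposal is missing this identification of a specific interval that properly crosses a boundary, which is the actual content of the lemma.
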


\begin{proof}
  First note that $b_2$ is incomparable to $a_1$, $a_3$, $b_1$, and
  $b_3$, but $\set{a_1,a_3,b_1,b_2,b_3}$ is not a $5$-element
  antichain. Therefore, the interval corresponding to $b_2$ in the
  minimal endpoint representation of $P$ must have positive
  length. Since $a_3$, $b_1$, and $b_2$ are pairwise incomparable,
  their intervals must overlap. Let $x$ be an integer in the
  intersection of the intervals for $a_3$, $b_1$, and $b_2$. Since
  $a_3 < b_3$, we have that $x < \ileft{b_3}$.  Similarly,
  $\iright{a_1} < x$.  Thus $\iright{a_1}$, $x$, and $\ileft{b_3}$ are
  all distinct points in the interval for $b_2$. Hence, this interval
  has length at least $2$. If $b_2$ is an optional interval in the
  block structure, then we are done. If $b_2$ is not an optional
  interval, then since its length is at least $2$, it must lie in a
  $\Cb$ or a $\Ub$. Furthermore, at least one endpoint of $b_2$ must
  be the endpoint of the block containing $b_2$. Since $b_1$ is
  incomparable to $c$ and $b_2 < c$, we must have that the interval of
  $b_1$ extends to the right of $\iright{b_2}$. Since $a_3$ is
  incomparable to $a_2$ and $a_2 < b_2$, we must also have that the
  interval of $a_3$ extends to the left of $\ileft{b_2}$. Since $x$
  lies in both the interval of $a_3$ and that of $b_1$, this forces
  one of $b_1$ and $a_3$ to have its endpoints in two different
  blocks, and therefore, there must be an optional interval.
\end{proof}

\begin{lem}\label{lem:H0}
  Let $P$ be a hereditary semiorder. If $P$ contains $\mbf{H}_0$, then
  the block structure of $P$ requires an optional interval.
\end{lem}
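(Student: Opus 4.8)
The plan is to follow the approach of Lemma~\ref{lem:FX2}, working in the minimal endpoint representation of $P$ and writing $I(v)=[\ileft{v},\iright{v}]$ for the interval of an element $v$. I will use the comparabilities $b_1<b_2$, $b_2<b_3$, $b_1<c$, $b_1<a_2$, $a_1<a_2$, $d<b_3$ of $\mbf{H}_0$, together with the fact that $c$ is incomparable to each of $a_1,b_2,b_3,d$, that $a_1$ is incomparable to $b_1$, that $d$ is incomparable to $b_1$, and that $a_2$ is incomparable to $b_2$ and to $d$; recall that incomparable elements have overlapping intervals.

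The first step is to show $I(c)$ has length at least $2$. From $a_1\parallel c$ we get $\ileft{c}\le\iright{a_1}$; from $a_1<a_2$, $\iright{a_1}<\ileft{a_2}$; from $a_2\parallel b_2$, $\ileft{a_2}\le\iright{b_2}$; from $b_2<b_3$, $\iright{b_2}<\ileft{b_3}$; and from $b_3\parallel c$, $\ileft{b_3}\le\iright{c}$. Hence $\ileft{c}\le\iright{a_1}<\ileft{a_2}\le\iright{b_2}<\ileft{b_3}\le\iright{c}$, so $\iright{c}-\ileft{c}\ge\ileft{b_3}-\iright{a_1}\ge 2$. The same relations also give, using $a_1\parallel b_1$ (resp.\ $d\parallel b_1$), $b_1<c$, and $a_1\parallel c$ (resp.\ $d\parallel c$), that $\ileft{a_1}\le\iright{b_1}<\ileft{c}\le\iright{a_1}$ and $\ileft{d}\le\iright{b_1}<\ileft{c}\le\iright{d}$; so both $I(a_1)$ and $I(d)$ contain $\ileft{c}$ and begin strictly to its left.

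For the endgame, suppose $c$ is not an optional interval (otherwise we are done). As $P$ is hereditary, Theorem~\ref{thm:block-structure} gives it a block decomposition, and since $I(c)$ has length at least $2$ it lies in a block $B=[\beta_1,\beta_2]$ that is a $\Cb$, a $\Ub$, or a $C_2$, with $\ileft{c}=\beta_1$ or $\iright{c}=\beta_2$. If $\ileft{c}=\beta_1$: since $I(d)$ contains $\beta_1$ with $\ileft{d}<\beta_1$, either $\iright{d}>\beta_1$ (so $\beta_1$ is interior to $I(d)$) or $\iright{d}=\beta_1$; the latter forces $\ileft{a_2}\le\iright{d}=\beta_1$ via $a_2\parallel d$, contradicting $\ileft{a_2}>\iright{a_1}\ge\ileft{c}=\beta_1$; hence $\beta_1$ lies in the interior of $I(d)$. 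A block boundary cannot be interior to an interval unless it is a weak boundary with optional element — a strong boundary $p$ has $[p,p]$ in the representation, which would then sit in that interior, contradicting Lemma~\ref{lem:min-repn-semi}, and no interval crosses a plain weak boundary — so an optional interval exists. If instead $\iright{c}=\beta_2$ and $\ileft{c}>\beta_1$, then $B$ is a $\Cb$ or $\Ub$ (the only interval of length at least $2$ in a $C_2$ block starts at its left boundary). Here $\ileft{d}<\ileft{c}\le\iright{d}<\iright{c}=\beta_2$ (the last inequality from $d<b_3$ and $b_3\parallel c$); if $d$ is an optional interval we are done, and otherwise $I(d)$ lies in a block, necessarily $B$ since $\ileft{c}\in I(d)$ is interior to $B$'s span, and $\iright{d}<\beta_2$ forces $\ileft{d}=\beta_1$. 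Then $b_1<c$ gives $\iright{b_1}<\ileft{c}$, $b_1\parallel d$ gives $\iright{b_1}\ge\ileft{d}=\beta_1$, $b_1<b_2$ gives $\ileft{b_2}>\iright{b_1}\ge\beta_1$, $b_2\parallel d$ gives $\ileft{b_2}\le\iright{d}<\beta_2$, and $b_2<b_3\parallel c$ gives $\iright{b_2}<\ileft{b_3}\le\beta_2$, so $I(b_2)$ lies strictly inside $(\beta_1,\beta_2)$. That is impossible: $(\beta_1,\beta_2)$ contains no boundary, so $I(b_2)$ is not optional and hence is an interval of $B$, yet every interval of a $\Cb$ or $\Ub$ block meets one of its two boundaries. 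So $c$ or $d$ is an optional interval after all.

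I expect the bookkeeping in the endgame to be the main obstacle: one must carefully track, for each of $I(d)$, $I(a_1)$, $I(b_1)$, whether an endpoint lands exactly on a block boundary or strictly beyond it, and it is worth isolating as a short sub-lemma the fact — used repeatedly above — that a block boundary interior to some interval of the minimal endpoint representation must carry an optional interval. The inequality manipulations are routine once the comparability data of $\mbf{H}_0$ has been recorded.
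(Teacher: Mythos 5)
Your proof is correct and follows essentially the same route as the paper's: interval-chasing with the comparabilities and incomparabilities of $\mbf{H}_0$ in the minimal endpoint representation, combined with the block-structure facts that a non-optional interval of length at least $2$ must lie in a $\Cb$ or $\Ub$ and touch a block boundary, and that only optional intervals can straddle a boundary. The paper pins down $b_2$ as the forced optional interval (both of its endpoints land in the interiors of the intervals of $d$ and $c$, hence of two different blocks), whereas you organize the cases around which endpoint of $c$ sits on its block boundary and conclude that $d$, $c$, or (via the final contradiction through $b_2$) an optional interval must be present --- a harmless reorganization of the same argument.
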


\begin{proof}
  As before, we will assume that we are working with the minimal
  endpoint representation of $P$. Since $b_2$ is incomparable to
  $a_1$, $a_2$, $c$, and $d$, but $a_1 < a_2$, we must have that the
  length of $b_2$'s interval is at least $1$. Since $b_1 < b_2$ but
  $b_1$ is incomparable to $d$, we must have
  $\ileft{d} < \ileft{b_2}$. Similarly, since $b_2 < b_3$ and $b_3$ is
  incomparable to $c$, we must have $\iright{b_2} < \iright{c}$. Since $a_1 < a_2$ but
both $a_1$ and  $a_2$ are incomprable to $d$, we must have that the interval of $d$
  extends left of the interval of $a_2$. This gives $\ileft{a_2}\leq \iright{d}$. Further, the interval of
  $a_2$ must leave room for the interval of $a_1$ to intersect that of
  $b_2$, which requires $\ileft{b_2} < \ileft{a_2}$. Combining these
  inequalities gives $\ileft{b_2} < \ileft{a_2}\leq \iright{d}$. We
  may now conclude that $\ileft{b_2}$ lies in the interior of $d$. By
  the dual argument, we have that $\iright{b_2}$ lies in the interior
  of $c$. By the minimality of the representation, this forces
  the intervals of $c$ and $d$ to each have length at least $2$. If
  either of these is an optional interval, then we are done. If not,
  then they cannot belong to a $\Wb$ because of their intervals'
  lengths. Thus, the endpoints of $b_2$ lie in the interiors of two
  different blocks, which is only possible if $b_2$ is an optional
  interval.
\end{proof}

\begin{lem}\label{lem:G0}
  If $P$ is a hereditary semiorder containing a $\Cb$ somewhere other
  than the first or last block, then at least one of the blocks adjacent to the
  $\Cb$ is $T_1$ or $P$ contains $\mbf{G}_0$.
\end{lem}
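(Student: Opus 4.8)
The plan is to assume $P$ is a hereditary semiorder whose block decomposition (which exists by Theorem~\ref{thm:block-structure}) contains a block $\Cb = C_a^b$ with $a\ge 3$ that is neither the first nor the last block and whose two neighbouring blocks are both different from $T_1$, and from this to exhibit seven intervals in the minimal endpoint representation of $P$ whose induced subposet is $\mbf{G}_0$. Since $\Cb$ is not the first block there is a block $A$ immediately to its left (with largest endpoint $b$), and since it is not the last block there is a block $B$ immediately to its right (with smallest endpoint $b+a$). The boundary between $A$ and $\Cb$ is strong or weak --- it cannot be a weak boundary with optional element, since Definition~\ref{defn:boundaries} requires the block on the right of a $\wbdo$ to be a $\Wb$ or $C_2$, and $\Cb$ is neither --- while the boundary between $\Cb$ and $B$ may additionally be a weak boundary with optional element. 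Across all of these cases the only intervals near positions $b$ and $b+a$ that depend on the choice of boundary type are the trivial intervals $[b,b]$ and $[b+a,b+a]$ and possibly the optional interval $[b+a-1,b+a+1]$, none of which will be used below.

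The point at which the hypothesis on the neighbours enters is the following. Because $A$ is a nontrivial block other than $T_1$ (it cannot be $T_0$, which is never combined with another block), inspecting the block types in Definition~\ref{defn:blocks} shows that $A$ contains the interval $[b-1,b]$ and also contains an interval $\iota$ whose right endpoint equals $b-1$; dually $B$ contains $[b+a,b+a+1]$ and an interval $\kappa$ whose left endpoint equals $b+a+1$. A block equal to $T_1$ fails both of these assertions, which is precisely why it is excluded from the conclusion. I then take the seven intervals
\begin{align*}
  a_1 &= \iota, & b_1 &= [b-1,b], & a_2 &= [b,b+1],\\
  b_2 &= [b+1,b+a], & c &= [b,b+a], & a_3 &= [b+a,b+a+1],
\end{align*}
together with $b_3 = \kappa$. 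These are seven distinct intervals: the three taken from $\Cb$ are pairwise distinct because $a\ge 3$, $\iota$ ends at $b-1$ whereas $[b-1,b]$ ends at $b$, and $[b+a,b+a+1]$ and $\kappa$ have different left endpoints. By the previous paragraph all seven occur in the minimal endpoint representation of $P$, since the only intervals used are $\iota$, $\kappa$, $[b-1,b]$, $[b,b+1]$, $[b+1,b+a]$, $[b,b+a]$, and $[b+a,b+a+1]$, and none of these is one of the three intervals that a nearby boundary can delete or create.

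Finally I would verify that the induced subposet on $\{a_1,b_1,a_2,b_2,c,a_3,b_3\}$ is $\mbf{G}_0$ with the labels of Figure~\ref{fig:semi-forb}; this is a direct check using the rule that $x<y$ holds exactly when the right endpoint of $x$ is strictly smaller than the left endpoint of $y$. For instance $c=[b,b+a]$ meets each of $b_1$, $a_2$, $b_2$, $a_3$ in at least one point (so is incomparable to them), lies strictly right of $\iota=a_1$, and lies strictly left of $\kappa=b_3$ because $b+a<b+a+1$, which reproduces the vertex $c$ of $\mbf{G}_0$; the remaining pairs go the same way, using $a\ge 3$ where it is needed (for instance $a_2<a_3$ uses $b+1<b+a$). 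The proof is thus essentially this endpoint bookkeeping, and I expect the only genuine obstacle to be the routine-but-delicate confirmation --- carried out in the first two paragraphs --- that all seven chosen intervals really are present in $P$ no matter which admissible boundaries abut $\Cb$, together with the full $21$-pair verification that the induced order is exactly $\mbf{G}_0$ rather than some relative of it.
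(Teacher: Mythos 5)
Your construction is essentially the paper's: the three intervals $[b,b+1]$, $[b+1,b+a]$, $[b,b+a]$ supplied by the $\Cb$, together with an interval ending at $b-1$, the interval $[b-1,b]$, the interval $[b+a,b+a+1]$, and an interval beginning at $b+a+1$, and your endpoint bookkeeping does yield $\mbf{G}_0$ exactly as in Figure~\ref{fig:C-forces-G0}. The genuine problem is your opening restriction to $\Cb = C_a^b$ with $a\geq 3$. In this paper $\Cb$ includes $C_2^b$ (Definition~\ref{defn:blocks}; see also $C_2^b\wbdo W_m^{b+2}$ in Lemma~\ref{lem:opt-dim3}), and Theorem~\ref{thm:dim2} invokes the present lemma for an arbitrary interior $\Cb$, so the $C_2$ case cannot be discarded. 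Nor is the restriction harmless bookkeeping: the one structural claim you derive from it --- that the boundary to the left of the $\Cb$ cannot be a weak boundary with optional element --- fails for $C_2^b$, which is a permitted right-hand block of a $\wbdo$. The repair is short: the two places you cite $a\geq 3$ (pairwise distinctness of the three $\Cb$-intervals, and $a_2<a_3$) need only $a\geq 2$, and if a $\wbdo$ does sit at $b$ then the block preceding it must be a $\Cb$ or $\Ub$, which still contains $[b-1,b]$ and an interval with right endpoint $b-1$, while the optional interval $[b-1,b+1]$ is not among your seven and does not alter the induced subposet. As written, though, your argument proves a strictly weaker statement than the lemma.

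Two smaller imprecisions, neither fatal. First, $T_1^{b-1}$ does contain an interval with right endpoint $b-1$, namely $[b-1,b-1]$, so it fails only the first of your two assertions about the left neighbour (what matters is only that it lacks $[b-1,b]$, dually $[b+a,b+a+1]$). Second, when a neighbouring block is a $W_1$ whose far boundary is weak, the trivial interval you would take for $\iota$ (or $\kappa$) is deleted by that boundary, so the existence of an interval ending at $b-1$ (or beginning at $b+a+1$) is better justified by minimality of the representation --- every endpoint value occurs as both a left and a right endpoint --- than by membership in the block itself; the paper's own proof is equally casual on this point.
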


\begin{proof}
  Suppose that $P$ is a hereditary semiorder containing $C_n^b$ with
  $b\neq 0$ and at least one following block. We also assume that neither
  neighboring block is $T_1$. Then the $C_n^b$ contains the intervals
  $[b,b+n]$, $[b,b+1]$, and $[b+1,b+n]$. The preceding block contains
  an interval containing the interval $[b-1,b]$ and an interval with
  right endpoint $b-1$. The succeeding block contains an interval
  containing $[b+n,b+n+1]$ and an interval with left endpoint
  $b+n+1$. These intervals, which are depicted in Figure~\ref{fig:C-forces-G0}, form a copy of
  $\mbf{G}_0$ in $P$.
  \begin{figure}[h]
    \centering
    \begin{tikzpicture}
      %\draw[thick] (0,0.35) -- (0,0.6);
    %\draw[thick] (4,0.35) -- (4,0.6);
      \draw[{[-]}, thick] (1,0) -- (2,0); % [b,b+1]
      \node [above] at (1.5,0) {\scriptsize $a_2$};
    \draw[{[-}, thick] (4,0) -- (5,0); % [b+n,?]
    \draw[thick, dashed] (5,0) -- (5.7,0); % [b+n,?]
    \node [above] at (4.5,0) {\scriptsize $a_3$};
    
    \draw[{[-]}, thick] (1,0.475) -- (4,0.475); %[b,b+n]
    \node [above] at (2.5,0.475) {\scriptsize $c$};
    \draw[{-]}, thick, dashed] (-0.7,0.475) -- (0,0.475); % [?,b-1]
    \node [below] at (0,0.39) {\scriptsize $a_1$};
    
    \draw[{-]}, thick] (0,0.95) -- (1,0.95); % [?,b]
    \draw[thick,dashed] (-0.7,0.95) -- (0,0.95); % [?,b]
    \node [above] at (0.5,0.95) {\scriptsize $b_1$}; 
    
    \draw[{[-]}, thick] (2,0.95) -- (4,0.95); %[b+1,b+n]
    \node [above] at (3,0.95) {\scriptsize $b_2$};
    \draw[{[-}, thick, dashed] (5,0.95) -- (5.7,0.95); % [b+n+1,?]
    \node [above] at (5,1) {\scriptsize $b_3$}; 

    \draw[<->] (-1,-0.25) -- (6.2,-0.25);
    \foreach \x/\xlab in {0/$b-1$,1/$b$,2/$b+1$,3/$\cdots$,4/$b+n$,5/$b+n+1$}
       \draw (\x,-0.3) -- (\x,-0.2) node [below] {\scriptsize\xlab};
  \end{tikzpicture}

    \caption{A $\Cb$ with neighbors other than $T_1$ forcing
      $\mbf{G}_0$}
    \label{fig:C-forces-G0}
  \end{figure}
\end{proof}

\begin{lem}\label{lem:opt-dim3}
  If $P$ is a hereditary semiorder and the block structure of $P$
  requires an optional interval, then $\dim(P)=3$.
\end{lem}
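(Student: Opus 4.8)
The plan is to build a forbidden subposet. Since $P$ is a semiorder, Rabinovitch's theorem gives $\dim(P)\le 3$, so it is enough to show $\dim(P)\ge 3$; by the characterization recalled in the introduction (Rabinovitch, completed by Kelly and by Trotter and Moore), for a semiorder this is equivalent to showing that $P$ contains one of the three posets $\mbf{FX}_2$, $\mbf{H}_0$, $\mbf{G}_0$ of Figure~\ref{fig:semi-forb}. I would prove that $P$ always contains $\mbf{FX}_2$, exhibited from the handful of intervals that sit around the optional interval.

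By Definition~\ref{defn:boundaries} and Theorem~\ref{thm:block-structure}, the block structure of $P$ contains a stretch $A_{k_1}^{b_0}\wbdo A'$ in which $A_{k_1}^{b_0}$ is a $\Cb$ or $\Ub$ (hence $k_1\ge 3$), $A'$ is a $\Wb$ or a $C_2$, and the optional interval is $[m-1,m+1]$ with $m=b_0+k_1$. I would first record which intervals the minimal endpoint representation of $P$ is thereby forced to contain: from $A_{k_1}^{b_0}$, the down‑intervals $[b_0,b_0+1]$ and $[b_0,m-1]$ (distinct because $k_1\ge 3$) and the up‑interval $[b_0+1,m]$; some interval $[\ell,b_0]$ ending at $b_0$ (which exists since $b_0<\lQ{P}$, so $b_0$ occurs as a right endpoint, and is just $[0,0]$ when $A_{k_1}^{b_0}$ is the first block); the optional interval $[m-1,m+1]$ itself; and from $A'$ the interval $[m,m+1]$ together with either $[m+1,m+2]$ (when $A'$ is a $C_2$ or a $\Wb$ with subscript at least $2$) or else $[m+1,m+1]$ (when $A'=W_1^m$). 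For the last point I would use the description of how block structures evolve in the proof of Theorem~\ref{thm:block-structure}: a block $\wbdo W_1^m$ is either the final block or is followed by $\sbd T_1^{m+1}$, so $[m+1,m+1]$ is never erased by a later weak boundary, and more generally the only intervals any boundary deletes are length‑zero intervals at boundary locations, none of which are on my list.

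I would then assign the vertices of $\mbf{FX}_2$ (labelled as in Figure~\ref{fig:semi-forb}) by $a_1=[b_0,b_0+1]$, $a_2=[\ell,b_0]$, $a_3=[b_0,m-1]$, $b_1=[m-1,m+1]$ (the optional interval), $b_2=[b_0+1,m]$, $b_3=[m,m+1]$, and $c=[m+1,m+2]$ or $[m+1,m+1]$ according to the case above, and check that these seven intervals are distinct and that the comparabilities they induce are exactly those of $\mbf{FX}_2$. Because two of these intervals represent comparable elements precisely when they are disjoint, each required relation or incomparability reduces to an inequality among $b_0$ and $m$ that holds because $k_1\ge 3$: $a_1,a_2,a_3$ form a three‑element antichain lying to the left, $a_2$ is below all of $b_1,b_2,b_3,c$, $a_1$ is below $b_1,b_3,c$ but incomparable to $b_2$, $a_3$ is below $b_3,c$ but incomparable to $b_1,b_2$, the unique non‑extremal vertex $b_2$ satisfies $a_2<b_2<c$, and $b_1,b_3,c$ are pairwise incomparable maximal elements. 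It is worth noting that the optional interval must play a \emph{maximal} vertex ($b_1$) here rather than the middle vertex $b_2$: using $[m-1,m+1]$ as $b_2$ fails because the adjacent $\Wb$ block cannot in general supply the two long intervals needed to flank it, whereas the up‑interval $[b_0+1,m]$ of $A_{k_1}^{b_0}$ does furnish a suitable $b_2$.

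I expect the relation‑chasing in the previous paragraph to be entirely routine; the real content, and the main thing to get right, is the interval inventory — verifying that the constraints built into Definitions~\ref{defn:blocks} and~\ref{defn:boundaries} and into the proof of Theorem~\ref{thm:block-structure} really do force all seven intervals into the representation of $P$, including the degenerate case $A'=W_1^m$ where one must argue that an interval with left endpoint $m+1$ survives. An alternative, possibly tidier, route is to follow the creation of the optional interval: it is introduced by a \moveone{} applied to a structure ending $\ldots A_{k_1}^{b_0}\wbd W_1^m$, and one can argue as in the proof of Theorem~\ref{thm:dim2-hereditary} that this \moveone{} produces a copy of $\mbf{FX}_2$ that no subsequent \moveone{}, \movetwo{}, or \movethree{} can destroy without leaving the class of semiorders.
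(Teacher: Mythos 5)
Your construction is fine when the block to the left of the optional boundary has subscript at least $3$, but the parenthetical ``hence $k_1\ge 3$'' is where the argument breaks: by Definition~\ref{defn:blocks} a $\Cb$ may be $C_2^{b_0}$, and Definition~\ref{defn:boundaries} (and the $\wbdo C_2^b$ and $\wbd W_1^b$ cases in the proof of Theorem~\ref{thm:block-structure}) explicitly allow $C_2^{b_0}\wbdo\Wb$ and $C_2^{b_0}\wbdo C_2^{b_0+2}$, indeed even consecutive optional boundaries $\wbdo C_2^{b_0}\wbdo$. With $k_1=2$ your interval inventory degenerates: $a_3=[b_0,m-1]$ coincides with $a_1=[b_0,b_0+1]$, and the needed relation $a_1<b_1$ becomes $b_0+1<m-1$, which fails. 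This is not a repairable bookkeeping slip within your strategy, because in these configurations $P$ need not contain $\mbf{FX}_2$ at all: the hereditary semiorder whose entire block structure is $C_2^0\wbdo W_1^2$ has the seven intervals $[0,0],[0,1],[0,2],[1,2],[1,3],[2,3],[3,3]$, and this poset is (isomorphic to) $\mbf{H}_0$, with only $8$ comparable pairs, so it cannot contain the $10$-comparability poset $\mbf{FX}_2$. Hence the blanket claim ``$P$ always contains $\mbf{FX}_2$'' is false, and your fallback suggestion of tracking the \moveone{} that creates the optional interval, as in Theorem~\ref{thm:dim2-hereditary}, inherits the same problem since that move produces an $\mbf{H}_0$-type configuration when the preceding block is a $C_2$.

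This is exactly why the paper's proof is a case analysis producing \emph{different} forbidden subposets: it exhibits $\mbf{FX}_2$ only when the preceding block has subscript at least $3$ and the following block is $C_2$ or a $\Wb$ with subscript at least $2$, and exhibits $\mbf{H}_0$ in the remaining cases ($\dots\wbdo W_1^{b+n}$, and $C_2^b\wbdo W_m^{b+2}$ with $m\ge 2$). Your argument for $k_1\ge 3$ is sound as far as I can check — your seven intervals exist, survive the boundary deletions, and realize $\mbf{FX}_2$, even in the $W_1$ case where the paper instead uses $\mbf{H}_0$ — so the proposal covers part of the ground with a cleaner uniform witness, but to complete the proof you must add an $\mbf{H}_0$ (or other) argument for every configuration in which the block preceding the optional interval is $C_2^{b_0}$.
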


\begin{proof}
  The proof is by straightforward case analysis based on what the
  blocks on either side of a weak boundary with optional interval can
  be. Recall that a weak boundary with optional interval must be
  preceded by a $\Cb$ or a $\Ub$ and must be followed by a $\Wb$ or a
  $C_2^b$, which limits the cases required. The cases and which
  forbidden subposet is produced are listed below.
  \begin{enumerate}
  \item $C_n^{b}\wbdo C_2^{b+n}$ for $n\geq 2$ and $U_n^b\wbdo
    C_2^{b+n}$ for $n\geq 3$ both produce $\mbf{FX}_2$.
  \item $C_n^b\wbdo W_1^{b+n}$ for $n\geq 2$ and $U_n^b\wbdo
    W_1^{b+n}$ for $n\geq 3$ both produce $\mbf{H}_0$.
    
  \item $C_2^b\wbdo W_m^{b+2}$ with $m\geq 2$ produces $\mbf{H}_0$.
  \item $C_n^b\wbdo W_m^{b+n}$ and $U_n^b\wbdo W_m^{b+n}$ with $n\geq
    3$ and $m\geq 2$ both produce $\mbf{FX}_2$.
  \end{enumerate}

  The first case is illustrated in Figure~\ref{fig:wbdo-C_2}. Note
  that if the $C_2^b$ is followed by a weak boundary, there is some
  interval from the next block with its left endpoint at $b+n$ that
  can be used as $c$. A similar situation applies if the block before the
  weak boundary is $C_2^{b-2}$ preceded by a weak
  boundary. Figure~\ref{fig:wbdo-C_2} is drawn to be general enough to
  encompass a $\Cb$ or $\Ub$ as the preceding block, and note that
  some intervals not involved in the $\mbf{FX}_2$ are omitted.
  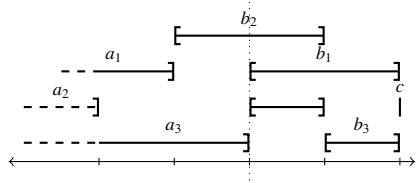
\begin{figure}[h]
    \centering
    \begin{tikzpicture}
      \draw[thick] (5,0.35) -- (5,0.6); % [b+1,b+1]
      \node [above] at (5,0.55) {\scriptsize$c$};
      %\draw[{[-}, thick] (5,0.475) -- (6,0.475); %
      \node [above] at (0.5,0.475) {\scriptsize$a_2$};
      \draw[{-]}, thick, dashed] (0,0.475) -- (1,0.475); % [?,a]
      \draw[{[-]},thick] (3,0.475)--(4,0.475);

      \draw[{[-]}, thick] (2,1.425) -- (4,1.425); % [c,d+1]
      \node [above] at (3,1.425) {\scriptsize$b_2$};

      \node [above] at (2,0) {\scriptsize $a_3$};
      \draw[{-]}, thick] (1,0) -- (3,0); %[a,x_{k+1}]
      \draw[thick, dashed] (0,0) -- (1,0); %[a,x_{k+1}]
      \node [above] at (4.5,0) {\scriptsize $b_3$};
       \draw[{[-]}, thick] (4,0) -- (5,0); 

       \node [above] at (4,0.95) {\scriptsize $b_1$}; 
       \draw[{[-]},thick] (3,0.95) -- (5,0.95);
       \node [above] at (1.2,0.95) {\scriptsize $a_1$};
       \draw[{-]}, thick] (1,0.95) -- (2,0.95); % [g,c]
       \draw[{-}, thick, dashed] (0.5,0.95) -- (1,0.95); % [g,c]

       \draw[dotted] (3,-0.5) -- (3,2);
       
       \draw[<->] (-0.2,-0.25) -- (5.2,-0.25);
      \foreach \x/\xlab in
      {1/$a$,2/$c$,4/$d+1$,3/$x_{k+1}$,5/$b$} {
      \draw (\x,-0.3) -- (\x,-0.2);
      % \node at (\x,-0.5) { \scriptsize\xlab};
    };
      %\node [below] at (2,-.65) {$U_b^4$};
    \end{tikzpicture} 
     \caption{Intervals forming an $\mbf{FX}_2$ with $\protect\wbdo C_2^{b+n}$}
     \label{fig:wbdo-C_2}
   \end{figure} 

   The third case is illustrated in
   Figure~\ref{fig:C_2-wbdo-W}. Again, a weak boundary before the
   $C_2^b$ is not a problem, since there must be an interval from the
   previous block with its right endpoint at
   $b$. Figure~\ref{fig:C_2-wbdo-W} can be readily extended to the
   left in the style of Figure~\ref{fig:wbdo-C_2} to cover the second
   case as well, provided that one turns $b_3$ into an interval of
   length $0$ (or uses an interval from the next block if the
   following boundary is weak).
     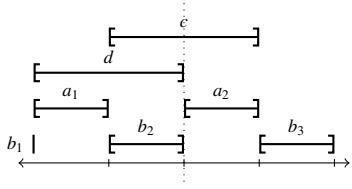
\begin{figure}[h]
    \centering
    \begin{tikzpicture}
      \draw[thick] (0,-0.125) -- (0,0.125); % [b+1,b+1]
      \node [left] at (0,0) {\scriptsize$b_1$};
      %\draw[{[-}, thick] (5,0.475) -- (6,0.475); %
      \node [above] at (0.5,0.475) {\scriptsize$a_1$};
      \draw[{[-]},thick] (0,0.475)--(1,0.475);
      \node [above] at (3.5,0) {\scriptsize$b_3$};
      \draw[{[-]}, thick] (3,0)--(4,0);
      
      \node [above] at (2.5,0.475) {\scriptsize$a_2$};
      \draw[{[-]},thick] (2,0.475)--(3,0.475);

      \draw[{[-]}, thick] (1,1.425) -- (3,1.425); 
      \node [above] at (2,1.425) {\scriptsize$c$};

%      \node [above] at (2,0) {\scriptsize $a_3$};
%      \draw[{-]}, thick] (1,0) -- (3,0); %[a,x_{k+1}]
 %     \draw[thick, dashed] (0,0) -- (1,0); %[a,x_{k+1}]
      \node [above] at (1.5,0) {\scriptsize $b_2$};
       \draw[{[-]}, thick] (1,0) -- (2,0); 

       \node [above] at (1,0.95) {\scriptsize $d$}; 
       \draw[{[-]},thick] (0,0.95) -- (2,0.95);
   %    \node [above] at (1.2,0.95) {\scriptsize $a_1$};
 %      \draw[{-]}, thick] (1,0.95) -- (2,0.95); % [g,c]
%       \draw[{-}, thick, dashed] (0.5,0.95) -- (1,0.95); % [g,c]

       \draw[dotted] (2,-0.5) -- (2,2);
       
       \draw[<->] (-0.2,-0.25) -- (4.2,-0.25);
      \foreach \x/\xlab in
      {1/$a$,2/$c$,4/$d+1$,3/$x_{k+1}$} {
      \draw (\x,-0.3) -- (\x,-0.2);
      % \node at (\x,-0.5) { \scriptsize\xlab};
    };
      %\node [below] at (2,-.65) {$U_b^4$};
    \end{tikzpicture} 
     \caption{Intervals forming a $\mbf{H}_0$ with
       $C_2^b\protect\wbdo W_m^{b+2}$ and $m\geq 2$}
     \label{fig:C_2-wbdo-W}
   \end{figure}

   The final case is not illustrated, but it is straightforward to
   verify after noting that the optional interval and the first two
   intervals of length $1$ from the $\Wb$ are $b_1$, $b_3$, and $c$ (in
   order by increasing left endpoint).
\end{proof}

We are now ready to assemble the preceding results to prove the block
characterization of semiorders of dimension at most $2$.

\begin{thm}\label{thm:dim2}
  Let $P$ be a semiorder. The dimension of $P$ is at most $2$ if and only
  if all of the following hold:
  \begin{enumerate}
  \item $P$ is hereditary,\label{item:hereditary}
  \item the block structure of $P$ does not require optional elements,
    and\label{item:no-opt}
  \item if the block structure of $P$ contains a $\Cb$ somewhere other
    than the first or last block, then at least one of the blocks adjacent to the
  $\Cb$ is $T_1$.\label{item:C-T1}
\end{enumerate}
\end{thm}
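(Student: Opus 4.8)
The plan is to prove both directions of the biconditional, drawing on the structural machinery already assembled. For the forward direction, suppose $\dim(P)\leq 2$. Condition~\eqref{item:hereditary} is immediate from the contrapositive of Theorem~\ref{thm:dim2-hereditary}: if $P$ were not hereditary, then $\dim(P)=3$. Given that $P$ is hereditary, Theorem~\ref{thm:block-structure} guarantees that $P$ has a unique block decomposition. Condition~\eqref{item:no-opt} follows from Lemma~\ref{lem:opt-dim3}: a block structure requiring an optional interval forces $\dim(P)=3$, contradicting our assumption. Condition~\eqref{item:C-T1} follows from Lemma~\ref{lem:G0}: if the block structure contained a $\Cb$ strictly between two non-$T_1$ blocks, then $P$ would contain $\mbf{G}_0$, hence would have dimension $3$ by Rabinovitch's characterization (Figure~\ref{fig:semi-forb}), again a contradiction. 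So the forward direction is essentially just quoting the earlier results in sequence.

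For the reverse direction, assume $P$ is a semiorder satisfying \eqref{item:hereditary}, \eqref{item:no-opt}, and \eqref{item:C-T1}; we must show $\dim(P)\leq 2$, equivalently that $P$ contains none of $\mbf{FX}_2$, $\mbf{H}_0$, $\mbf{G}_0$. By hereditariness, $P$ has a block decomposition (Theorem~\ref{thm:block-structure}), and by \eqref{item:no-opt} that decomposition uses no optional intervals. Suppose toward a contradiction that $P$ contained $\mbf{FX}_2$. Then Lemma~\ref{lem:FX2} says the block structure requires an optional interval, contradicting \eqref{item:no-opt}. Similarly, if $P$ contained $\mbf{H}_0$, then Lemma~\ref{lem:H0} would force an optional interval, again contradicting \eqref{item:no-opt}. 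The remaining and more delicate case is $\mbf{G}_0$. Here the contrapositive of Lemma~\ref{lem:G0} is not quite enough, since that lemma only produces $\mbf{G}_0$ from a $\Cb$ in an ``interior'' position with both neighbors non-$T_1$; I would need to argue that under conditions \eqref{item:no-opt} and \eqref{item:C-T1}, a $\mbf{G}_0$ cannot arise in any other way. Concretely, I would assume $P$ contains a copy of $\mbf{G}_0$ with the labeling of Figure~\ref{fig:semi-forb}, examine the minimal endpoint representation, and locate the seven intervals among the blocks; the key observations are that $c$ is incomparable to $b_1, b_2, b_3$ while $a_1 <a_2 <a_3$ and $a_i$ is incomparable to $b_j$ for appropriate pairs, forcing $c$ to be an interval of length at least $2$ that sits ``inside'' a $\Cb$ or $\Ub$, with the $a_i$'s and $b_j$'s straddling it. Tracing which block each endpoint lands in, I expect that either an optional interval is forced (contradicting \eqref{item:no-opt}) or a $\Cb$ is pinned strictly between two non-$T_1$ blocks (contradicting \eqref{item:C-T1}), with the $\mbf{G}_0$ itself being the local witness.

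The main obstacle is precisely this $\mbf{G}_0$ analysis: unlike $\mbf{FX}_2$ and $\mbf{H}_0$, whose presence I can dispatch by citing a single lemma, ruling out $\mbf{G}_0$ requires a fresh case analysis showing that every embedding of $\mbf{G}_0$ into a hereditary semiorder with no optional intervals must place a $\Cb$ in a forbidden position. I would organize this by the width of the interval for $c$ (exactly $2$ versus more than $2$), and within each case by which block contains $\ileft{c}$ and which contains $\iright{c}$, using the minimum-size constraints on $\Cb$ and $\Ub$ blocks (namely $C_k^b$ and $U_k^b$ require $k\geq 3$ except for $C_2^b$) together with the fact that $a_1$ must reach left past $\ileft{c}$ and $b_3$ must reach right past $\iright{c}$. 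One subtlety to handle carefully is the endpoints of the $\mbf{G}_0$ copy that need not coincide with block boundaries: the figure in Rabinovitch's characterization draws certain endpoints as equal, but in an arbitrary embedding we only know the relevant intervals overlap, so the argument must work with strict/non-strict inequalities rather than equalities, much as in the proof of Theorem~\ref{thm:dim2-hereditary}. Once all three forbidden subposets are excluded, $\dim(P)\leq 2$ follows from the Rabinovitch--Kelly--Trotter--Moore characterization quoted in the introduction, completing the proof.
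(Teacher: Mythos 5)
Your forward direction and the exclusion of $\mbf{FX}_2$ and $\mbf{H}_0$ in the reverse direction coincide exactly with the paper's proof: quote Theorem~\ref{thm:dim2-hereditary}, Lemma~\ref{lem:opt-dim3}, Lemma~\ref{lem:G0} for one implication, and the contrapositives of Lemmas~\ref{lem:FX2} and \ref{lem:H0} for the other. The genuine gap is the part you yourself flag as ``the main obstacle'': showing that a copy of $\mbf{G}_0$ inside a hereditary semiorder whose block structure has no optional intervals forces an interior $\Cb$ with both neighbors different from $T_1$. In your write-up this step is only a plan (``I would organize\dots'', ``I expect that\dots''), and the organizing principle you propose (casing on whether $c$'s interval has length exactly $2$ or more, then on which blocks contain $\ileft{c}$ and $\iright{c}$) is not backed by any argument; in particular your claim that $c$'s interval must a priori have length at least $2$ is not justified by the incomparabilities you list, and it is not how the containment in a $\Cb$ is actually forced. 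Since this is the only place in the whole theorem where new work beyond citing earlier lemmas is required, the proposal as it stands does not prove the statement.

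For comparison, the paper's argument at this point runs as follows. From the incomparabilities of $c$ with $a_2,a_3,b_1,b_2$ and $a_2<a_3$ one gets that $c$'s interval has length at least $1$; then, because an interval lying in the interior of a $\Wb$ is incomparable to only two other intervals while $c$ is incomparable to four of the seven (distinct, by NODH) intervals, one endpoint of $c$'s interval must be a block boundary, and by duality one may take it to be $\ileft{c}$. The absence of optional intervals then pins things down step by step: $a_1,b_1$ lie in an earlier block with $\iright{b_1}=\ileft{c}$; comparing up sets of $b_1$ and $a_2$ gives $\iright{b_1}<\iright{a_2}$, so $a_2$ shares $c$'s block with $\ileft{a_2}=\ileft{c}$ and $\iright{a_2}<\iright{c}$; the possibility $\iright{b_2}>\iright{c}$ is excluded because it would force $a_3$'s interval to cross a block boundary (an optional interval), so $\iright{b_2}=\iright{c}$ is the right boundary of the block. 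Hence $c$ spans its block from boundary to boundary, so the block is a $\Cb$ containing $c,a_2,b_2$; and since $b_1$ is incomparable to both $c$ and $a_1$ on the left while $a_3$ is incomparable to both $c$ and $b_3$ on the right, neither adjacent block can be $T_1$, contradicting condition~\eqref{item:C-T1}. Supplying an argument of this kind (yours need not be identical, but it must actually carry out the case analysis) is what is missing from your proposal.
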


\begin{proof}
  We first assume that $P$ is a semiorder and $\dim(P)\leq 2$ and will
  prove that the three statements hold. The first statement is the
  contrapositive of Theorem~\ref{thm:dim2-hereditary}. The second
  statement is the contrapositive of Lemma~\ref{lem:opt-dim3}. Since
  $\dim(P)\leq 2$, $P$ does not contain $\mbf{G}_0$, and therefore the
  third statement follows from Lemma~\ref{lem:G0}.

  Now suppose that $P$ is a semiorder for which all three statements
  hold. We will show that $\dim(P)\leq 2$. The first two statements
  and the contrapositives of Lemmas~\ref{lem:FX2} and \ref{lem:H0}
  show that $P$ does not contain $\mbf{FX}_2$ or $\mbf{H}_0$. For a
  contradiction, we now assume that $P$ has dimension $3$. By what
  we've already shown, this means that $P$ must contain
  $\mbf{G}_0$. We will now show that the third statement must be
  violated by finding a $\Cb$ with two neighboring blocks that are not
  $T_1$. Since $c$ is incomparable to $a_2$, $a_3$, $b_1$, and $b_2$
  but $a_2 < a_3$, we know that the length of $c$'s interval in the
  minimal endpoint representation must be at least $1$. Since there
  are no duplicated holdings amongst the seven points of $\mbf{G}_0$,
  all intervals must be distinct. Furthermore, since an interval
  contained in the interior of a $\Wb$ is incomparable to only two
  other intervals but $c$ is incomparable to $4$ points from
  $\mbf{G}_0$, we know that one endpoint of $c$'s interval is on the
  boundary between two blocks. By duality, we may assume without loss
  of generality that this endpoint is $\ileft{c}$.

  Since $a_1 < c$ and
  $b_1$ is incomparable to both $c$ and $a_1$, we must have that $a_1$
  and $b_1$ both lie in a block before the one containing $c$ and that
  $\iright{b_1} = \ileft{c}$ because there are no optional intervals. Since $b_1$ has a larger up set than
  $a_2$ but is incomparable to $a_2$, we know that
  $\iright{b_1} < \iright{a_2}$. Thus, $a_2$ belongs to the same block
  as $c$ and $\ileft{c} = \ileft{a_2}$ because of the lack of optional
  intervals. Since $a_2$ has a larger up
  set than $c$, we know that $\iright{a_2} < \iright{c}$. Since $b_2$
  is incomparable to $a_2$, we can thus conclude that
  $\ileft{b_2} < \iright{c}$. If $\iright{b_2} > \iright{c}$, then we
  know by the lack of optional elements that $\iright{b_2}$ must be
  the right end of the block containing $c$, $a_2$, and $b_2$. Since
  $b_2 < b_3$, this means that $b_3$ must lie in a subsequent
  block. We now notice that $a_3$ is incomparable to $b_3$ and $c$,
  and thus $\iright{b_2}$ lies in the interior of $a_3$'s interval,
  which requires that $a_3$ cross the boundary of a block,
  contradicting the lack of optional intervals. Thus, we
  must have $\iright{b_2} = \iright{c}$ is the right boundary of the
  block containing $c$. Since $c$ has both its endpoints on the
  boundary of its block, we must have that $c$, $a_2$, and $b_2$ lie
  in a $\Cb$. Recognizing that $a_3$ is incomparable to both $c$ and
  $b_3$ and $b_1$ is incomparable to both $c$ and $a_1$ shows that
  neither of the adjacent blocks to the $\Cb$ can be $T_1$, and our
  proof is complete.
\end{proof}

\section{Enumeration of Hereditary Semiorders}

We are now prepared to use the block structure in order to enumerate
the hereditary semiorders. The difficulty in building a generating
function to complete this task is that there are restrictions on how
the blocks can be combined using the various boundaries. In
particular, we recall that a $T_1^b$ may not be combined using a weak
boundary, two consecutive $\Wb$ may not be combined using a weak
boundary, a $\Wb$ may not be followed by a weak boundary with optional
element, and a weak boundary with optional element may only preceed a
$\Wb$ or a $C_2^b$. We will use notation inspired by regular
expressions to give a compact way of describing the ways in which
blocks are arranged. The components of our notation are $+$, $*$, and
$?$, used as superscripts. A superscript $+$ will denote one or more
consecutive occurrences of the entity to which the $+$ is attached. A
$*$ means that zero or more consecutive occurrences of the entity are
allowed. A $?$ means that at most one occurrence of the entity is
allowed. When a $+$ is used between two strings (rather than a
superscript), each pattern is allowed. All of our boundaries
will be assumed to be weak unless explicitly shown in the
notation. Recall from Definition~\ref{defn:blocks} that we will use
$\Bb$ to refer to a block that could be either a $\Cb$ or a $\Ub$.

We will break up the block structure of a hereditary semiorder based
upon the occurrences of strong boundaries and the occurences of 
weak boundaries with optional intervals. Because a strong boundary is
determined based on the presence of an interval of length $0$ and such
an interval must be present at the left of the first block of a
hereditary semiorder and at the right of the last block, we will treat
the the two ends of a block structure as if there are strong
boundaries there. If we first consider the situation where no weak
boundaries with optional intervals are allowed, then it suffices to
break the full block structure up into the pieces between strong
boundaries. We represent this as $\Xss = \Wb^? (\Bb^+
\Wb)^*\Bb^*$. Essentially, between two strong boundaries, we can view
the blocks as divided further by the occurrences of the $\Wb$, which
may not be adjacent (since all boundaries inside this string are
weak). Between $\Wb$s, we must have at least one $\Bb$. Notice that
this structure allows for there to be \emph{no} blocks between two
strong boundaries, which is what creates a $T_1^b$. We may further
repeat the pattern $\Xss$ as many times as required, which then
introduces strong boundaries into the overall block structure.

When an optional interval is present, we may trace backward from that
weak boundary with optional interval until we reach either another
weak boundary with optional interval or a strong boundary (including
the beginning of the block structure). Thus, we will now describe two
further subpatterns, one to cover what can occur between a strong
boundary and the first ensuring weak boundary with optional interval
(denoted $\Xso$) and the other to cover what occurs between two weak
boundaries with optional intervals (denoted $\Xoo$). To construct
$\Xso$, note that the block before a weak boundary with optional
element must be a $\Bb$, and certainly many of them are permitted, so
$\Xso$ must end with $\Bb^+$. Other than needing to end with a $\Bb$,
this case looks much like $\Xss$, in that we see isolated $\Wb$ with
strings of $\Bb$ in between, and an initial $\Wb$ may or may not
occur. Thus, $\Xso = W^?(\Bb^+\Wb)^*\Bb^+$. When both ends of a string
of blocks joined by weak boundaries are weak boundaries with optional
intervals, the situation is more complicated. The weak boundary with
optional interval may be followed by a $\Wb$, in which case the
structure proceeds just as with $\Xso$, since the final block of the
pattern must be a $\Bb$ to allow for the trailing weak boundary with
optional interval. This means $\Xoo$ must allow $\Wb(\Bb^+
\Wb)^*\Bb^+$. We may also follow the weak boundary with optional
interval with $C_2^b$. Since this block is itself a $\Bb$, this could
be the end of the pattern, proceeding immediately to another weak
boundary with optional interval. If not, we then see the remainder
divided up by $\Wb$, ensuring that the last block before the weak
boundary with optional interval is a $\Bb$. This gives us $C_2^b
\Bb^*(\Wb\Bb^+)^*$, which combines with the case where the first block
after the weak boundary with optional interval is $\Wb$ to give us
\[\Xoo = \Wb(\Bb^+\Wb)^*\Bb^+ + C_2^b \Bb^*(\Wb\Bb^+)^*.\]

As we proceed through the block structure, we must eventually reach an
occurrence of a weak boundary with optional interval where the next
meaningful boundary is a strong boundary (possibly the one at the end
of the block structure). Thus, we need a pattern to
describe what happens in such a case, which we denote by
$\Xos$. Again, the weak boundary with optional interval may be
followed by a $\Wb$ or a $C_2^b$. The former case gives rise to
$\Wb(\Bb^+\Wb)^*\Bb^*$, much like in $\Xoo$, but here we end with
$\Bb^*$ because the next boundary is strong, and so we may end with a
$\Wb$. When beginning with a $C_2^b$, the situation is also analogous
to $\Xoo$, but we must allow a $\Wb$ at the end, which gives
$C_2^b\Bb^*(\Wb\Bb^+)^*\Wb^?$. Combining these yields
\[\Xos = \Wb(\Bb^+\Wb)^*\Bb^* + C_2^b\Bb^*(\Wb\Bb^+)^*\Wb^?.\]

We now have all the pieces necessary to create a pattern that
describes the block structure of all hereditary semiorders. We first
note that a weak boundary with optional interval may occur in the form
$\Xso\wbdo\Xos$, or we may place several copies of $\Xoo$ (with weak
boundaries with optional intervals on each side) in between the $\Xso$
and the $\Xos$. This means we will need to see $\Xso\wbdo
(\Xoo\wbdo)^*\Xos$ in the overall pattern. Since there may be multiple
strong boundaries before the first weak boundary with optional
interval, the overall pattern must begin $\Xss^*$. We need another
occurrence of $\Xss^*$ along with the pattern containing weak
boundaries with optional intervals in order to allow  weak
boundaries with optional intervals to be separated by a combination of
strong and weak boundaries. Thus, the pattern that accounts for all
hereditary semiorders is
\[\mc{H} = (\Xss\sbd)^*(\Xso\wbdo(\Xoo \wbdo)^*\Xos\sbd (\Xss\sbd)^*)^*.\]
Note that $\mc{H}$ allows for the empty pattern, which is how we will
account for $T_0$ when converting this pattern into a generating function.

Translation of the $+$, $*$, and $?$ used in our patterns into
generating functions is relatively straightforward. For readers
unfamiliar with the use of generating functions to enumerate strings
or sequences in this manner, a good introduction is provided by Wilf
in \cite{wilf:genfn}. If $\mc{F}$ is a pattern with generating
function $F(x)$, then $\mc{F}^*$ has generating function $1/(1-F(x))$,
$\mc{F}^+$ has generating function $F(x)/(1-F(x))$, and $\mc{F}^?$ has
generating function $(1+F(x))$. The other piece that will require
attention is the boundaries, but first we will proceed to determine
the generating functions for $\Wb$, $\Ub$, $\Cb$, $\Bb$, and $C_2^b$,
since those are the atomic pieces of the patterns here. (The patterns
developed so far do not involve $\Cb$ or $\Ub$ alone, but we will
require these when considering the case of dimension at most $2$ in
the next section.)

Because our patterns above are built on the assumption of weak
boundaries between blocks unless we specify a strong boundary or weak
boundary with optional interval, we will build our generating
functions for the blocks by assuming weak boundaries on each end. This
then has the effect of making each of our blocks appear to have two
fewer intervals in them than they would when occurring in
isolation. For example, the smallest $\Cb$ is $C_2^b$, which has $5$
intervals. However, the lowest order term in $C(x)$, the generating
function for $\Cb$, will be $x^3$. Throughout the following, we will
use $F(x)$ as the generating function for the block or pattern
$\mc{F}$.

Recall that an interval order has duplicated holdings if and only if
two points of the interval order have the same interval in its minimal
endpoint representation. Also, the only way to create duplicated
holdings in an ascent sequence is to have $x_i = x_{i+1}$. Thus, we
may proceed to think about the blocks on the basis of no duplicated
holdings and then form the generating function by allowing repetition
of terms in the ascent sequence to allow for duplicated holdings. For
conciseness as we do this, we will let $f(x) = x/(1-x)$ for the
remainder of the paper.

When $W_k^b$ is preceded and followed by a weak boundary, we do not
have length $0$ intervals to concern ourselves with. The one that
would be present at the left with a strong boundary is simply never
created, and the one that would be at the right with a strong boundary
is created by the ascent sequence from an earlier block and
subsequently moved into a later block. Thus, we are concerned with a
subsequence of length $k$ when we work without duplicated
holdings. The subsequence we must have is $b,b+1,b+2,b+3,\dots,b+k-1$,
since each move $b+i$ takes the interval of length $0$ and shifts it
to the right while adding the interval $[b+i,b+i+1]$. Since $k\geq 1$,
we know that the subsequence is not empty. Thus, the generating
function component we need here (before allowing for duplicated
holdings) is $f/(1-f)$, since there is only one way to do
things. Substituting $f(x)$ for $f$ takes care of duplicated holdings
for us, since we may repeat the integers from $b$ to $b+k-1$ provided
they remain in increasing order and each integer appears at least one
time. Therefore, $W(x) = f(x)/(1-f(x))$.

Note that a $\Ub$ must have at least six intervals (and thus at least
four must be accounted for in our generating function). Also, the
intervals appear in pairs. A $U_k^b$ following a weak boundary is
created by the subsequence $b,b+1,b,b+1,\dots,b,b+1$, where there are
$k-1$ appearances of $b,b+1$. This is because the $b$ creates an
interval with its left endpoint at $b$ and its right endpoint as far
right as possible, and then the subsequent $b+1$ produces a
\movethree{} that truncates the interval created at the previous step
to end at $b+1$ and stretches/shifts the other intervals of the
block. Since $k\geq 3$, before duplicated holdings here we have
$f^4/(1-f^2)$, with the $f^4$ accounting for the initial $b,b+1,b,b+1$
and the $1/(1-f^2)$ providing the subsequent pairs
$b,b+1$. Substituting $f(x)$ for $f$ takes care of duplicated holdings
and gives us $U(x) = (f(x))^4/(1-(f(x))^2)$.

The situation for $\Cb$ is a slight modification of what we did for
$\Ub$ above, since if the subsequence ended with a $b$ instead of a
$b+1$, we would have the interval that spans the length of the
$\Cb$. Thus, the subsequence correspondint to $C_k^b$ must start
$b,b+1,b$ and then have $k-2$ pairs $b+1,b$ following it. Since $k\geq
2$, we have $f^3/(1-f^2)$ before addressing duplicated holdings.
Therefore $C(x) = (f(x))^3/(1-(f(x))^2)$. The generating function of $C_2^b$, which is
required in $\Xoo$ and $\Xos$, is $(f(x))^3$. Since $\Bb$ merely stands for a $\Cb$ or a $\Ub$, $B(x) =
U(x) + C(x)$, which simplifies to $(f(x))^3/(1-f(x))$.

Assembling the generating function is now a matter of introducing
additional factors of $f(x)$ for each strong boundary and each weak
boundary with optional element, since we known exactly what number
must appear in the ascent sequence to produce the required interval,
but we may repeat it as many times as we like to account for
duplicated holdings. Therefore, we have the following:
\begin{align*}
  X_{ss}(x) =&
              f(x)(1+W(x))\frac{1}{1-\frac{B(x)}{1-B(x)}W(x)}\frac{1}{1-B(x)}\\
  X_{so}(x) =&f(x)(1+W(x))\frac{1}{1-\frac{B(x)}{1-B(x)}W(x)}\frac{B(x)}{1-B(x)}\\
  X_{oo}(x) =& f(x)W(x)
              \frac{1}{1-\frac{B(x)}{1-B(x)}W(x)}\frac{B(x)}{1-B(x)}
              + f(x)\cdot (f(x))^3\cdot \frac{1}{1-B(x)}\frac{1}{1-W(x)\frac{B(x)}{1-B(x)}}\\
  X_{os}(x) =& f(x)W(x) \frac{1}{1-\frac{B(x)}{1-B(x)}W(x)}
              \frac{1}{1-B(x)}\\ &+ f(x)\cdot (f(x))^3 \cdot \frac{1}{1-B(x)} \frac{1}{1-W(x)\frac{B(x)}{1-B(x)}} (1+W(x))\\
  H(x) =& f(x) \frac{1}{1-X_{ss}(x)} \frac{1}{X_{so(x)} \frac{1}{1-X_{oo}(x)} X_{os}(x)\frac{1}{1-X_{ss}(x)}}
\end{align*}

After fully substituting, we conclude that this section has proved the following
theorem:
\begin{thm}\label{thm:hereditary-gf}
  The generating function for the number of hereditary semiorders with
  $n$ points is
  \[H(x) = \frac{-x^5 + 9x^4 - 12x^3 + 6x^2 - x}{x^5 - 14x^4 + 29x^3 - 23x^2 + 8x-1}.\]
\end{thm}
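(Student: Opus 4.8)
The plan is to turn the regular-expression description of block structures into a rational generating function by the usual symbolic dictionary and then carry out the (lengthy but mechanical) algebra. The first and only conceptual step is to check that the pattern $\mc H$ displayed above is \emph{unambiguous}: every hereditary semiorder is matched by exactly one word. This is precisely what \thmref{thm:block-structure} provides --- the block decomposition of a hereditary semiorder is \emph{unique}, and the case analysis in its proof records exactly which block/boundary adjacencies can occur (no weak boundary at a $T_1$, no weak boundary between two $\Wb$, a weak boundary with optional interval only preceded by a $\Cb$ or a $\Ub$ and only followed by a $\Wb$ or a $C_2^b$). Hence the decomposition can be cut uniquely at its strong boundaries (including the two implicit ones at the ends of the representation) and at its weak boundaries with optional interval, and the adjacency constraints force each resulting piece to be matched by exactly one of $\Xss$, $\Xso$, $\Xoo$, $\Xos$. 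Two pieces of bookkeeping then remain: within each block the only freedom, once points with duplicated holdings are disregarded, is the length of a forced monotone ``spine'' subsequence of the ascent sequence, so duplicated holdings are recorded exactly by the substitution $f\mapsto f(x)=x/(1-x)$ (invoking the fact from Section~1 that a repetition $x_i=x_{i+1}$ is the sole source of duplicated holdings); and each strong boundary and each weak boundary with optional interval carries a length-$0$ interval whose ascent-sequence entry may likewise be repeated, contributing one more factor $f(x)$. With these in place, the rules $\mc F^*\mapsto 1/(1-F(x))$, $\mc F^+\mapsto F(x)/(1-F(x))$, $\mc F^?\mapsto 1+F(x)$ yield exactly the displayed expressions for $X_{ss}(x)$, $X_{so}(x)$, $X_{oo}(x)$, $X_{os}(x)$, and $H(x)$.

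Next I would compute the atomic block generating functions by substituting $f(x)=x/(1-x)$: this gives $W(x)=f/(1-f)=x/(1-2x)$, $B(x)=U(x)+C(x)=f^3/(1-f)=x^3/\bigl((1-x)^2(1-2x)\bigr)$, and $(f(x))^3=x^3/(1-x)^3$ for the $C_2^b$ block. Carrying everything over the common denominator $(1-x)^2(1-2x)$ (and its powers) keeps the remaining manipulation in check. Then substitute $W(x)$, $B(x)$, and $(f(x))^3$ into the four $X$-expressions, reduce each to a single rational function, plug those into the expression for $H(x)$, clear denominators, and cancel common factors; the result is
\[H(x) = \frac{x^5 - 9x^4 + 12x^3 - 6x^2 + x}{1-x^5 + 14x^4 - 29x^3 + 23x^2 - 8x}.\]

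As a safeguard against arithmetic slips I would expand $H(x)$ as a power series and compare with direct counts. For $n\le 4$ every semiorder is hereditary (the smallest non-semiorder interval order, $\oneplusthree$, has $4$ points), so the first four coefficients must be the Catalan numbers $1,2,5,14$; and the example of Section~2 shows that the first genuinely non-hereditary semiorder occurs only at $n=7$, so the coefficients continue $42,132,428$ (the last being $C_7-1$).

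The main obstacle is entirely computational: the $X$-expressions are several layers of nested rational functions, and composing them into $H(x)$ produces polynomials large enough that a stray sign or exponent is easy to introduce and hard to detect, so in practice this simplification (or at least a verification of the final identity) should be done with a computer algebra system and cross-checked against the series expansion above. The only non-mechanical ingredient, the unambiguity of $\mc H$ together with the duplicated-holdings bookkeeping, reduces to \thmref{thm:block-structure} and the elementary facts about ascent sequences recalled in Section~1.
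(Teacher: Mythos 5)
Your proposal follows essentially the same route as the paper: its proof of Theorem~\ref{thm:hereditary-gf} is exactly this translation of the unique block/boundary decomposition (Theorem~\ref{thm:block-structure}) into rational generating functions via the $*,+,?$ dictionary, with $f(x)=x/(1-x)$ absorbing duplicated holdings, one extra factor of $f(x)$ per strong boundary and per weak boundary with optional interval, the same atomic series $W(x)$, $B(x)$, $(f(x))^3$, and a final mechanical simplification. One cosmetic quibble: the interval attached to a weak boundary with optional element has length $2$, not $0$, but since it still corresponds to a single (repeatable) prescribed entry of the ascent sequence, your accounting of one $f(x)$ factor per such boundary is unaffected.
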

A table of values and discussion of asymptotics will be deferred to
section~\ref{sec:conc}, after we have completed our enumeration of the
semiorders of dimension at most $2$.

\section{Enumeration of Dimension 2 Semiorders}

The previous section has completed much of the work required for the
enumeration of the semiorders of dimension at most $2$, since we have
the necessary components to address each of the block types. However,
the rules for combining the blocks in this case are different. On one
level, things get simpler, because we no longer are allowed to have
weak boundaries with optional elements. However, the third statement
of Theorem~\ref{thm:dim2} places significant restrictions on how a
$\Cb$ may appear in the block structure of a semiorder of dimension at
most $2$. We can use this to our advantage, however, since an interior $\Cb$
must have a $T_1^b$ as a neighbor on (at least) one side, which means
that interior $\Cb$ must appear adjacent to a strong boundary.

We proceed by considering what can happen between occurrences of
$T_1$. The first pattern we consider represents when there are no
strong boundaries between two appearances of $T_1$ (other than the
strong boundaries necessitated by the $T_1$s). We call this pattern
$\Ab_0$. Because the blocks on either side of $\Ab_0$ are $T_1$, we
are allowed the option of a $\Cb$ as the first block of $\Ab_0$ or as
the last block of $\Ab_0$, but we may not have a $\Cb$ anywhere else
inside $\Ab_0$. What appears between these two possible $\Cb$ must be
a mix of $\Wb$ and $\Ub$, all combined by weak boundaries. Thus, the
interior must take the form $\Ub^*(\Wb\Ub^+)^*\Wb^?$. It is tempting
to sandwich this pattern between two $\Cb^?$ and be done, but that
would give us two distinct ways of getting a $\Cb$ by itself, which we
cannot allow. Thus, our definition is
\[\Ab_0 = \Cb^?\nonempty{\Ub^*(\Wb\Ub^+)^*\Wb^?}\Cb^? + \Cb\Cb^?,\]
where the $\nonempty{\cdot}$ indicates that we do not allow the enclosed
portion of the pattern to be empty. (This is readily accomplished in
the generating function by subtracting $1$ from the factor that would
otherwise be present.)

Next, we consider what happens when there are strong boundaries that
occur between the $T_1$s. The argument is essentially the same as
before, giving rise to the pattern
\[\Ab_s = \nonempty{\Cb^? \Ub^*(\Wb\Ub^+)^*\Wb^?}\sbd
  \nonempty{\Ub^*(\Wb\Ub^+)^*\Wb^?\sbd}^*
  \nonempty{\Wb^?(\Ub^+\Wb)^*\Ub^*\Cb^?}. \]
If we define $\Ab = \Ab_0 + \Ab_s$, then $\Ab$ represents whatever can
occur between two non-adjacent $T_1$ in a semiorder of dimension at
most $2$. This tells us that the pattern $\mc{D}$ that
represents all semiorders of dimension at most $2$ is 
\[\mc{D} = (\Ab\sbd)^?(T_1^+\sbd \Ab)^* T_1^*.\]

The conversion to a generating function proceeds as in the previous
section, including the introduction of an initial factor of $f(x)$ to
account for the interval $[0,0]$. Since the pattern $\mc{D}$ can be
empty, this factor will account for $T_0$ (and duplicated
holdings). We do need the generating function to introduce for $T_1^+$
and $T_1^*$. Because the subsequence required is prescribed and does
not involve any repetitive structure, we conclude that the former is
$f(x)/(1-f(x))$, while the latter is $1+(f(x)/(1-f(x)))$. After fully
substituting and simplifying, we can therefore conclude the following
theorem.

\begin{thm}\label{thm:dim2-gf}
  The generating function for the number of semiorders of dimension at
  most $2$ with $n$ points is
  \[D(x) = \frac{-5x^8 + 41x^7 - 101x^6 + 129x^5 - 96x^4 + 42x^3 -
      10x^2 + x)}{7x^8 - 66x^7 + 197x^6 - 311x^5 + 294x^4 - 172x^3 +
      61x^2 - 12x + 1}.\]
\end{thm}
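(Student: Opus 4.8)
The plan is to translate the block-pattern description $\mc{D} = (\Ab\sbd)^?(T_1^+\sbd\Ab)^*T_1^*$, with $\Ab = \Ab_0 + \Ab_s$, into a rational generating function using the transfer dictionary recorded above: $\mc F^*$, $\mc F^+$, and $\mc F^?$ become $1/(1-F(x))$, $F(x)/(1-F(x))$, and $1+F(x)$ respectively; a $\nonempty{\cdot}$ is implemented by subtracting $1$ from the generating function of the enclosed pattern; and each explicit strong boundary $\sbd$ contributes a factor $f(x) = x/(1-x)$, which records the forced ascent-sequence entry creating the length-$0$ interval there together with its repetitions, which produce duplicated holdings. By \thmref{thm:dim2}, a semiorder has dimension at most $2$ exactly when it is hereditary, uses no optional intervals, and has every interior $\Cb$ next to a $T_1$, and the pattern $\mc D$ was built to enumerate precisely these block structures; so it remains only to carry out the translation and simplify.

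First I would import the block generating functions from the previous section, which are unchanged here since the atomic blocks and the convention (each block recorded with two fewer intervals than it has in isolation) are the same: with $f(x) = x/(1-x)$,
\[ W(x) = \frac{f(x)}{1-f(x)}, \qquad U(x) = \frac{f(x)^4}{1-f(x)^2}, \qquad C(x) = \frac{f(x)^3}{1-f(x)^2}. \]
A lone $T_1$ between strong boundaries contributes $f(x)$, so $T_1^+$ contributes $f(x)/(1-f(x))$ and $T_1^*$ contributes $1/(1-f(x))$. The sub-pattern $\Ub^*(\Wb\Ub^+)^*\Wb^?$ — and, by symmetry, its mirror image $\Wb^?(\Ub^+\Wb)^*\Ub^*$ — describes sequences of $\Ub$'s and $\Wb$'s in which no two $\Wb$'s are adjacent, and hence has generating function $G(x) = (1+W(x))/(1-U(x)(1+W(x)))$. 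From $\Ab_0 = \Cb^?\nonempty{\Ub^*(\Wb\Ub^+)^*\Wb^?}\Cb^? + \Cb\Cb^?$ one obtains
\[ A_0(x) = \bigl(1+C(x)\bigr)^2\bigl(G(x)-1\bigr) + C(x)\bigl(1+C(x)\bigr), \]
where the second summand is exactly what prevents a lone $\Cb$ from being counted twice, and from the three $\nonempty{\cdot}$ pieces of $\Ab_s$ (having generating functions $(1+C)G-1$, $G-1$, and $G(1+C)-1$, joined by strong boundaries with a $\nonempty{\cdot}^*$ in the middle) one obtains
\[ A_s(x) = \bigl((1+C(x))G(x)-1\bigr)\,f(x)\,\frac{1}{1-(G(x)-1)\,f(x)}\,\bigl(G(x)(1+C(x))-1\bigr). \]
Setting $A(x) = A_0(x)+A_s(x)$ and feeding this, the $T_1$ contributions, the strong-boundary factors, and a leading $f(x)$ accounting for the interval $[0,0]$ (and hence also for $T_0$ and its duplicated holdings) into the translation of $\mc D$ gives
\[ D(x) = f(x)\bigl(1+A(x)f(x)\bigr)\,\frac{1}{1-\dfrac{f(x)^2A(x)}{1-f(x)}}\,\frac{1}{1-f(x)}. \]

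The final step is the substitution $f(x) = x/(1-x)$ followed by simplification to the stated ratio of degree-$8$ polynomials. I expect this to be the main obstacle, not conceptually but computationally: $D(x)$ is a tower of nested geometric-series factors whose partial-fraction collapse to a single rational function with denominator of constant term $1$ and numerator divisible by $x$ takes some care. The two spots most prone to error, and so most deserving of attention, are the placement of the $f(x)$-factors for the strong boundaries (including the leading one), and the correct use of the subtraction-of-$1$ rule for each $\nonempty{\cdot}$, so that (for instance) a single $\Cb$ block contributes to $D(x)$ exactly once. As a check one can expand $D(x)$ as a power series and compare the first several coefficients against a direct count of small semiorders of dimension at most $2$ — equivalently, the Catalan number $C_n$ minus the number of dimension-$3$ semiorders on $n$ points — and confirm that $D(0)=0$.
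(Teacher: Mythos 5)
Your proposal follows essentially the same route as the paper: it takes the characterization of Theorem~\ref{thm:dim2} together with the patterns $\Ab_0$, $\Ab_s$, and $\mc{D}$, and carries out the same generating-function translation (factors of $f(x)=x/(1-x)$ for strong boundaries and $T_1$ blocks, subtraction of $1$ for each $\nonempty{\cdot}$, leading $f(x)$ for $[0,0]$ and $T_0$), differing only in that you write out the intermediate expressions $G$, $A_0$, $A_s$, and the assembled $D$ explicitly where the paper simply says the conversion proceeds as in the hereditary case. Your assembled formula is consistent with the paper's result (its expansion reproduces the correct small counts, e.g. $1,2,5,14,42,132,426,\dots$ after substituting $f(x)=x/(1-x)$), so only the routine final simplification remains, as in the paper.
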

\section{Conclusion}\label{sec:conc}

\subsection*{Exact and Asymptotic Values} Recalling that the number of semiorders on $n$ points is the
$n$\textsuperscript{th} Catalan
number, we can use SageMath \cite{sagemath} and the generating
functions from Theorems~\ref{thm:hereditary-gf} and \ref{thm:dim2-gf} to calculate the number of
semiorders on $n$ points, the number of hereditary semiorders on $n$
points, the number of semiorders of dimension at most $2$ on $n$
points, and the number of semiorders of dimension $3$ on $n$
points. These values are shown in Table~\ref{tab:counts}, with the
second line of each column header giving the sequence number in the
Online Encyclopedia of Integer Sequences.
\begin{table}[bh]\centering
  
  \begin{tabular}{crrrr}
    $n$ & Semiorders & Hereditary & $\dim\leq 2$ & $\dim =3$\\
    & A000108 & A293499 & A293498 & A293501\\\Xhline{2\arrayrulewidth}
    1 &                    1 &                    1 &                    1 &                    0\\
    2 &                    2 &                    2 &                    2 &                    0\\
    3 &                    5 &                    5 &                    5 &                    0\\
    4 &                   14 &                   14 &                   14 &                    0\\
    5 &                   42 &                   42 &                   42 &                    0\\\hline
    6 &                  132 &                  132 &                  132 &                    0\\
    7 &                  429 &                  428 &                  426 &                    3\\
    8 &                1,430 &                1,415 &                1,390 &                   40\\
    9 &                4,862 &                4,730 &                4,544 &                  318\\
    10 &               16,796 &               15,901 &               14,822 &                1,974\\\hline
    11 &               58,786 &               53,593 &               48,183 &               10,603\\
    12 &              208,012 &              180,809 &              156,118 &               51,894\\
    13 &              742,900 &              610,157 &              504,487 &              238,413\\
    14 &            2,674,440 &            2,058,962 &            1,627,000 &            1,047,440\\
    15 &            9,694,845 &            6,947,145 &            5,240,019 &            4,454,826\\\hline
    16 &           35,357,670 &           23,437,854 &           16,861,453 &           18,496,217\\
    17 &          129,644,790 &           79,067,006 &           54,228,190 &           75,416,600\\
    18 &          477,638,700 &          266,717,300 &          174,351,450 &          303,287,250\\
    19 &        1,767,263,190 &          899,693,960 &          560,481,708 &        1,206,781,482\\
    20 &        6,564,120,420 &        3,034,814,143 &        1,801,653,769 &        4,762,466,651\\\hline
    21 &       24,466,267,020 &       10,236,853,534 &        5,791,301,311 &       18,674,965,709\\
    22 &       91,482,563,640 &       34,530,252,629 &       18,615,976,402 &       72,866,587,238\\
    23 &      343,059,613,650 &      116,475,001,757 &       59,841,686,254 &      283,217,927,396\\
    24 &    1,289,904,147,324 &      392,885,252,033 &      192,366,897,839 &    1,097,537,249,485\\
    25 &    4,861,946,401,452 &    1,325,253,166,761 &      618,392,292,337 &    4,243,554,109,115\\\hline
  \end{tabular}
  
  \caption{Exact counts of the various classes of semiorders}
  \label{tab:counts}
\end{table}

An asymptotic analysis of the coefficients of the rational generating
functions derived above is a straightforward application of the
techniques of section IV.5 of \cite{flajolet:analytic-comb} by
Flajolet and Sedgewick. The poles of $H(x)$ are $1$ and approximately
$0.29646$, $11.681$, and $0.51131\pm0.16533i$. Thus, the
number of hereditary semiorders on $n$ points is asymptotically
$0.08346\cdot 3.373133^n$. The poles of $D(x)$ are approximately
$0.311065$, $5.60822$, $0.456557\pm 0.123792 i$,
$0.536649\pm 0.24759i$, and $0.761438\pm 0.68404i$. Thus, 
the number of semiorders of dimension at most $2$ on $n$ points is
asymptotically $0.12958\cdot 3.2148^n$. For comparison, recall that
the Catalan numbers are asymptotically $4^n/(n^{3/2}\sqrt{\pi})$.

\subsection*{No Duplicated Holdings}
As discussed in the arguments that led to Theorems~\ref{thm:hereditary-gf} and
\ref{thm:dim2-gf}, we use $f(x) = x/(1-x)$ in the construction of the
generating functions to allow for consecutive appearances of an
integer in the ascent sequences and therefore duplicated holdings in
the poset. If, instead, we write those generating functions in terms
of the variable $f$ (replacing any explicit occurrence of $f(x)$ by
$f$), we then have the following corollary.

\begin{cor}\label{cor:NODH-gf}
  The ordinary generating functions for the number of hereditary semiorders
  with no duplicated holdings ($H_N(f)$) and the number of semiorders of
  dimension at most $2$ with no duplicated holdings ($D_N(f)$) are
  \begin{align*}
    H_N(f) &= \frac{f^5 - f^4 + 2f^2 - f}{2f^4 - 2f^3 - f^2 + 3f - 1}\\
    \intertext{and}
    D_N(f) &=\frac{-f^8 + f^7 - f^6 + f^4 - 3f^3 + 3f^2 - f}{f^8 - f^7 + f^6 - f^5 + f^4 + 2f^3 - 5f^2 + 4f - 1}.
  \end{align*}
\end{cor}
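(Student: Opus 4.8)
The plan is to observe that $H(x)$ and $D(x)$ were, by construction, built as rational functions of $f(x) = x/(1-x)$ alone, and that passing to the no-duplicated-holdings subclasses amounts precisely to replacing that internal quantity $f(x)$ by a plain indeterminate $f$ that marks a single point. First I would record the bookkeeping fact underlying every generating function in the previous two sections: each factor of $f(x)$ introduced there — whether coming from a $\Wb$, a $\Ub$, a $\Cb$, a $C_2^b$, a strong boundary, a weak boundary with optional interval, or the leading interval $[0,0]$ — was introduced to account for one mandatory entry of the ascent sequence together with an arbitrary number of immediate repetitions of that entry. Recall that an interval order has duplicated holdings if and only if its ascent sequence satisfies $x_i = x_{i+1}$ for some $i$, and that such a repetition only ever adjoins a point with duplicated holdings without otherwise altering the minimal endpoint representation. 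Hence, restricting to semiorders with no duplicated holdings, each mandatory entry occurs exactly once and contributes exactly one point.

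Next I would make the substitution precise. From the explicit formulas $W(x)=f(x)/(1-f(x))$, $U(x)=(f(x))^4/(1-(f(x))^2)$, $C(x)=(f(x))^3/(1-(f(x))^2)$, the $C_2^b$ contribution $(f(x))^3$, and $B(x)=(f(x))^3/(1-f(x))$, together with the assembly formulas for $X_{ss}$, $X_{so}$, $X_{oo}$, $X_{os}$, $H$ (and, in the dimension-at-most-$2$ case, for $\Ab_0$, $\Ab_s$, $\mc{D}$, and $D(x)$), one sees that each is a rational function of the single quantity $f(x)$; write $\hat H(f)$ and $\hat D(f)$ for the rational functions in one variable with $H(x) = \hat H(f(x))$ and $D(x) = \hat D(f(x))$. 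Re-reading those same derivations with $f$ now marking a single point — i.e.\ each mandatory ascent-sequence entry contributing exactly $f$ rather than $f(x)$ — shows that $\hat H(f)$ enumerates hereditary semiorders with no duplicated holdings by number of points and $\hat D(f)$ enumerates semiorders of dimension at most $2$ with no duplicated holdings by number of points. Thus $H_N(f) = \hat H(f)$ and $D_N(f) = \hat D(f)$.

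It then remains to put these into closed form. Since $f(x) = x/(1-x)$ is invertible with inverse $x = f/(1+f)$, one has $\hat H(f) = H\!\left(\tfrac{f}{1+f}\right)$ and $\hat D(f) = D\!\left(\tfrac{f}{1+f}\right)$ with $H$ and $D$ as in Theorems~\ref{thm:hereditary-gf} and \ref{thm:dim2-gf}; substituting and clearing denominators yields the stated expressions for $H_N(f)$ and $D_N(f)$. (Equivalently, and more robustly, one reassembles $H_N(f)$ and $D_N(f)$ directly from the component generating functions and the assembly formulas, reading every $f(x)$ as $f$.) The one step that requires actual attention — mechanical rather than conceptual — is verifying that literally every point-contributing factor in the entire two-section construction was bundled as an $f(x)$ and not as a bare $x$; this is confirmed by inspecting each assembly formula in turn. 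Granting that, the closed forms follow by a routine symbolic simplification, carried out in SageMath \cite{sagemath}.
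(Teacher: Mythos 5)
Your proposal is correct and follows essentially the same route as the paper: the published proof likewise observes that every point-contributing factor in the constructions of Theorems~\ref{thm:hereditary-gf} and \ref{thm:dim2-gf} was introduced as $f(x)=x/(1-x)$ precisely to absorb consecutive repeated ascent-sequence entries (i.e., duplicated holdings), so that rewriting the generating functions in the variable $f$ yields the no-duplicated-holdings counts. Your added remarks about the substitution $x=f/(1+f)$ and the check that no bare $x$ appears are sensible bookkeeping but do not change the argument.
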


\subsection*{Restricted Ascent Sequences} As mentioned in the
introduction, Kitaev and Remmel showed in \cite{kitaev:intord-stats}
that the Catalan numbers enumerate a nicely-defined subset of ascent
sequences. They called an ascent sequence $(x_1,\dots,x_n)$ a
\emph{restricted ascent sequences} if $x_1 = 0$ and for all $i$ with
$2\leq i\leq n$, $m-1\leq x_i\leq 1+\asc((x_1,\dots,x_{i-1}))$,
where %% 2 June 2018: MTK inserted 1+ here to correct definition
$m$ is the largest term in $(x_1,\dots,x_{i-1})$. However, they also
showed that the restricted ascent sequences do not correspond to the
semiorders under the bijection $\Psi$. The ascent sequence
$(0,1,0,1,2,0,2)$ of Figure~\ref{fig:nonhered2} corresponds to a
semiorder, but the ascent sequence is not restricted. The sequence
$(0,1,0,1,0,1,2)$ is a restricted ascent sequence, but it is easy to
verify that it does not correspond to a semiorder. While we are not
able at this time to fully characterize the interval orders
corresponding to restricted ascent sequences, we do have the following
theorem as fairly direct consequence of our earlier work.

\begin{thm}\label{thm:restricted-hereditary}
  Let $P$ be a semiorder and $(x_1,\dots,x_n) = \Psi(P)$ the
  corresponding ascent sequence. The sequence $(x_1,\dots,x_n)$ is a
  restricted ascent sequence if and only if $P$ is hereditary.
\end{thm}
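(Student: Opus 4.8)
The plan is to compare the two prefix-closed properties step by step and to reformulate the ``restricted'' condition in terms of the quantity $\ell^{*}$. Write $Q_j=\Psi\inv((x_1,\dots,x_j))$ and $m_j=\max(x_1,\dots,x_j)$. A direct check of the three moves (including the duplicated-holdings use of \moveone) shows $\lQ[*]{Q_j}=x_j$ for every $j$, while the inequality $x_j\le 1+\asc(x_1,\dots,x_{j-1})$ is nothing but the definition of an ascent sequence. Thus the upper bound in the definition of a restricted ascent sequence is automatic, and since $m_j=\max(m_{j-1},x_j)$ the lower bounds for all $i$ amount to $x_j\ge m_j-1$ for every $j$; equivalently, along the chain of prefixes the value $\lQ[*]{Q_j}$ never drops two or more units below its running maximum $\max_{l\le j}\lQ[*]{Q_l}$. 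So the theorem reduces to showing that, for a semiorder $P$, this ``no deep drop'' property of $\ell^{*}$ is equivalent to $P$ being hereditary.

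For the forward implication I would use the block structure. Every prefix $Q_j$ of a hereditary $P$ is again a hereditary semiorder, hence carries the block decomposition of Theorem~\ref{thm:block-structure}, and I would follow the construction of that decomposition one ascent-sequence entry at a time, using the subsequences that realize each block under each admissible preceding boundary (these are exactly the subsequences identified in the proof of Theorem~\ref{thm:block-structure} and reused in the generating-function calculations): inside a $\Wb$ the relevant value of $\ell^{*}$ only increases; inside a $\Cb$ or a $\Ub$ it merely oscillates between the block's superscript $b$ and $b+1$; and the entry that creates the length-$0$ interval of a boundary at position $p$ makes $\ell^{*}$ equal to $p$. In each of these situations the next value of $\ell^{*}$ lies within $1$ of the current running maximum, so the ``no deep drop'' property persists and $\Psi(P)$ is restricted.

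For the converse I would argue the contrapositive: a semiorder $P$ that is not hereditary has a non-restricted ascent sequence. Since $P$ is a semiorder but some prefix is not, there is a largest index $k$ with $Q_k$ not a semiorder, and $Q_{k+1}$ is obtained from $Q_k$ by a \movethree. Feeding precisely this configuration into the argument in the proof of Theorem~\ref{thm:dim2-hereditary} produces intervals $[a,b]$, $[c,d]$, $[d,b]$, $[g,c]$ in the minimal endpoint representation of $Q_k$ with $g\le a<c<x_{k+1}\le d<b=\lQ{Q_k}$, with $\lQ[*]{Q_k}\le a$, and with $[c,d]$ in the interior of $[a,b]$. I would then trace this nested pair backward through the moves to the first prefix at which a nesting appears---necessarily created by a \moveone\ or a \movethree---and combine Lemma~\ref{lem:min-repn-semi} with the minimality of that prefix's representation to exhibit an index $i$ at which $\lQ[*]{Q_i}$ falls at least $2$ below an earlier value $\lQ[*]{Q_l}$; that is, $x_i\le m_{i-1}-2$, so the sequence is not restricted.

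The main obstacle is this last step. The destroying \movethree\ at step $k+1$ only raises $\ell^{*}$, so the witness to non-restrictedness lies strictly earlier, and when the first nesting is born by a \moveone\ the surrounding semiorder (via Lemma~\ref{lem:min-repn-semi}) forces $\ell^{*}$ at the previous step only to exceed the outer interval's left endpoint by one unit; squeezing out a second unit requires using minimality again---a value strictly between the outer and inner left endpoints must serve as the left endpoint of some interval, and unless $\ell^{*}$ was already large enough this interval sits inside the maximal interval and gives a second, strictly deeper nesting. Making this accounting precise, and in particular handling the borderline case in which $\ell^{*}$ drops by only one when the nesting is created so that the failure of ``restricted'' surfaces at a later prefix, is where the genuine work lies.
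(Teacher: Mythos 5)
Your reduction is sound: indeed $\lQ[*]{Q_j}=x_j$ for every prefix (each move makes the newly created interval the one with right endpoint $\lQ{Q_j}$ and smallest left endpoint), so ``restricted'' is exactly the no-drop-by-two condition on $\ell^*$, and your forward direction is essentially the paper's argument --- an induction along the block structure of Theorem~\ref{thm:block-structure}, tracking the running maximum block by block and boundary by boundary. The genuine gap is in the converse, and you name it yourself: the step ``trace the nested pair backward and exhibit an index $i$ with $x_i\le m_{i-1}-2$'' is never carried out. Worse, the configuration you anchor it to makes that step hard by design: you start from the \emph{largest} $k$ with $Q_k$ not a semiorder, so neither $Q_k$ nor the prefixes where the nesting was born are hereditary, no block structure is available, and you have no handle on the running maximum at the birth of the first nesting. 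The ``borderline case'' you defer --- a nesting created by an entry only one below the running maximum, with non-restrictedness (if any) surfacing only at some later prefix --- is precisely the case your accounting cannot yet rule in or out, so one direction of the theorem remains unproved.

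The paper closes this from the opposite end, and the contrast shows what idea is missing. Assume the sequence is restricted and $P$ is a semiorder, and take the \emph{first} failure: the smallest $k$ with $Q_{k+1}$ not a semiorder. Then $Q_k$ is hereditary, so Theorem~\ref{thm:block-structure} gives its block structure, and restrictedness pins $x_{k+1}$ to within one of the maximum $m$ read off from the final block ($m=b+1$ for $T_1^b$, $C_a^b$, $U_a^b$; $m=b+a-1$ for $W_a^b$ with $a>1$; $m=b+1$ or $b$ for $W_1^b$ according to the preceding boundary). The case analysis already done in the proof of Theorem~\ref{thm:block-structure} then shows that the only values of $x_{k+1}$ that can break semiorderhood produce a containment such as $[b+1,b+2]$ inside $[b,b+a]$ in which \emph{neither} interval reaches the largest right endpoint; since \movethree{} only truncates intervals with right endpoint $\lQ{Q}$ and otherwise stretches or shifts, such a containment can never be destroyed by later entries, so $P$ itself would contain a \oneplusthree{} by Lemma~\ref{lem:min-repn-semi}, a contradiction. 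This persistence observation is the missing ingredient in your plan: it eliminates any need to trace a nesting backward from the last failure and is exactly what disposes of your borderline case. If you want to keep your contrapositive formulation, you would still need an argument of this persistence type applied at the first failing prefix rather than the last one.
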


\begin{proof}
  When $P$ is hereditary, the fact that $(x_1,\dots,x_n)$ is a
  restricted ascent sequence follows primarily from the proof of the
  block structure in Theorem~\ref{thm:block-structure} and the proof
  of the enumeration of hereditary semiorders in
  Theorem~\ref{thm:hereditary-gf}. We have given the values of
  $\lQ[*]{Q_i}$, where $Q_i = \Psi\inv((x_1,\dots,x_{i}))$ in the
  proof of Theorem~\ref{thm:block-structure}. Using the proof of
  Theorem~\ref{thm:hereditary-gf}, It is straightforward to verify
  that if the last block is $T_1^b$, $C_a^b$, or $U_a^b$, then the
  maximum value $m$ in the ascent sequence is $b+1$. If the last block
  is $W_a^b$ and $a> 1$, then $m=b+a-1$. If the last block is $W_1^b$,
  then $m = b+1$ if the preceding boundary is strong and $m=b$ if the
  preceding boundary is weak. Since the only time we can add an
  interval that extends to the left of $b$ in a hereditary semiorder
  in any of these cases is when the last block is $W_1^b$ and we are
  adding the optional interval, we thus can see by induction that
  $(x_1,\dots,x_n)$ is a restricted ascent sequence.

  For the converse, we consider a minimal counterexample. That is, we
  assume that $(x_1,\dots,x_n)$ is a restricted ascent sequence
  corresponding to a semiorder but that it is not hereditary. Hence
  there is an integer $k<n$ such that for all $i\leq k$,
  $Q_i = \Psi\inv((x_1,\dots,x_i))$ is a semiorder but
  $Q_{k+1} = \Psi\inv((x_1,\dots,x_{k+1}))$ is not a semiorder. By
  what we have assumed, we know that $Q_k$ is a hereditary
  semiorder. Therefore, Theorem~\ref{thm:block-structure} describes
  its block structure. If the last block is $T_1^b$, $C_a^b$, or
  $U_a^b$, then $\max_{i\colon 1\leq i\leq k} x_i = b+1$. Thus,
  $x_{k+1}\geq b$, since we are working with a restricted ascent
  sequence. If $x_{k+1} = b$, then \moveone{} is used, which cannot
  create a \oneplusthree{} here. From the proof of
  Theorem~\ref{thm:block-structure}, we also know that if
  $x_{k+1} \in \set{b+1,b+a,b+a+1}$, then $Q_{k+1}$ is a
  semiorder. This leaves us to consider $a\geq 3$,
  $b+2\leq x_{k+1} < b+a$, and the last block $U_a^b$ or
  $C_a^b$. Here, the \movethree{} leaves us with $[b+1,b+2]$ in the
  interior of $[b,b+a]$, which results from stretching
  $[b,b+a-1]$. Since neither of these intervals reaches to the largest
  right endpoint of the minimal endpoint representation, this
  containment relationship cannot be changed, and
  $\Psi\inv((x_1,\dots,x_n))$ cannot correspond to a semiorder. When
  the last block is $W_a^b$ with $a > 1$, then the fact that
  $m = b+a-1$ prevents us from adding an interval that contains
  another in its interior. When the block structure of $Q_k$ ends
  $\sbd W_1^b$, we have that $m=b+1$, and thus we cannot add an
  interval creating a \oneplusthree. When the block structure of $Q_k$
  ends with $W_1^b$ preceded by a weak boundary (with or without
  optional interval), $m=b$. The largest interval we can thus add,
  given we have a restricted ascent sequence, is $[b-1,b+1]$, which
  does not create a \oneplusthree{} because the interval $[b,b]$ is
  not present. Therefore, a counterexample cannot exist, and our proof
  is complete.
\end{proof}

\subsection*{Open Questions} We close with some possible interesting
directions for future work. One would be to consider other classes of
combinatorial objects equinumerous to interval orders and ascent
sequences to see if there is another natural way to construct a
bijection between interval orders and ascent sequences in such a way
that every initial subsequence of an ascent sequence corresponding to
a semiorder is one that also corresponds to a semiorder. Put another
way, can we find a bijection $\Phi$ from interval orders to ascent
sequences so that replacing $\Psi$ by $\Phi$ in
Definition~\ref{defn:hereditary} leads to all semiorders being
hereditary?

Theorem~\ref{thm:restricted-hereditary} shows that the restricted
ascent sequences defined by Kitaev and Remmel in
\cite{kitaev:intord-stats} that correspond to semiorders give rise to
precisely the hereditary semiorders. We have left open the question of
characterizing all interval orders that correspond to restricted
ascent sequences.

Another direction of interest would be to discover more enumerative
results involving more global poset statistics. Most of the recent
restricted enumeration results focus on statistics that do not appear
frequently in the poset literature. (The exceptions being the work of
Khamis in \cite{khamis:intord-height} and Hu in
\cite{hu:semiorders-height}, where height was the driving statistic.)
Given an interval representation, the width of an interval order is
easy to calculate. An enumeration of interval orders (or semiorders)
by width would be of interest. Dimension would be another natural parameter
to attempt enumeration by, but since the dimension of interval orders
is unbounded, the problem is likely very hard.

\subsection*{Acknowledgments}

The authors would like to thank Jeffrey Remmel for his encouragement
and helpful conversations during the early stages of this work. We are
also grateful to Emilie Purvine and the two anonymous referees for
their thoughtful reading and comments on this paper.
\bibliographystyle{acm}
\bibliography{zotero-abbrev}

\end{document}